\documentclass[11pt,a4paper,reqno]{amsart}
\usepackage{amsfonts,amssymb,amsmath,amsthm,graphicx,color,amscd,xspace,verbatim,mathrsfs}
\usepackage{scalerel}
\usepackage{esint}

\makeatletter

\setlength{\voffset}{0truein}
\setlength{\textheight}{8.7truein}
\setlength{\textwidth}{6truein}
\setlength{\hoffset}{-0.5truein}

\usepackage{hyperref}


\makeatletter
\def\@tocline#1#2#3#4#5#6#7{\relax
  \ifnum #1>\c@tocdepth 
  \else
    \par \addpenalty\@secpenalty\addvspace{#2}%
    \begingroup \hyphenpenalty\@M
    \@ifempty{#4}{%
      \@tempdima\csname r@tocindent\number#1\endcsname\relax
    }{%
      \@tempdima#4\relax
    }%
    \parindent\z@ \leftskip#3\relax \advance\leftskip\@tempdima\relax
    \rightskip\@pnumwidth plus4em \parfillskip-\@pnumwidth
    #5\leavevmode\hskip-\@tempdima
      \ifcase #1
       \or\or \hskip 1em \or \hskip 2em \else \hskip 3em \fi%
      #6\nobreak\relax
      \dotfill
      \hbox to\@pnumwidth{\@tocpagenum{#7}}
    \par
    \nobreak
    \endgroup
  \fi}
\makeatother




\newtheorem{theorem}{Theorem}[section]
\newtheorem{lemma}[theorem]{Lemma}
\newtheorem{proposition}[theorem]{Proposition}

\newtheorem{corollary}[theorem]{Corollary}

\theoremstyle{definition}
\newtheorem{definition}[theorem]{Definition}

\theoremstyle{remark}
\newtheorem{remark}[theorem]{Remark}

\newcommand{\N}{{\mathbb N}}
\newcommand{\R}{{\mathbb R}}

\newcommand{\loc}{\mathrm{loc}}

\newcommand{\beqn}{\begin{eqnarray}}
\newcommand{\eeqn}{\end{eqnarray}}   
\newcommand{\beq}{\begin{eqnarray*}}
\newcommand{\eeq}{\end{eqnarray*}}





\newcommand{\be}{\begin{equation}}
\newcommand{\bel}[1]{\begin{equation}\label{#1}}
\newcommand{\ee}{\end{equation}}

\newcommand{\BA}{\begin{array}}
\newcommand{\EA}{\end{array}}
\newcommand{\BAN}{\renewcommand{\arraystretch}{1.2}
\setlength{\arraycolsep}{2pt}\begin{array}}
\newcommand{\BAV}[2]{\renewcommand{\arraystretch}{#1}
\setlength{\arraycolsep}{#2}\begin{array}}

\newcommand{\BSA}{\begin{subarray}}
\newcommand{\ESA}{\end{subarray}}
\newcommand{\BAL}{\begin{aligned}}
\newcommand{\EAL}{\end{aligned}}





\newcommand{\norm}[1]{\left \|#1\right \|}



\newcommand{\supp}{\mathrm{supp}\,}
\newcommand{\dist}{\mathrm{dist}\,}
\newcommand{\sign}{\mathrm{sign}}
\newcommand{\diam}{\mathrm{diam}\,}







\def\dist{\mathrm{dist}}



      \def\CL{{\mathcal L}}


   \def\BBN {\mathbb N}    
   \def\BBR {\mathbb R}


\def\GTM {\mathfrak M}





\newcommand{\xa}{\alpha}
\newcommand{\xb}{\beta}
\newcommand{\xg}{\gamma}

\newcommand{\xd}{\delta}
\newcommand{\xD}{\Delta}
\newcommand{\xe}{\varepsilon}
\newcommand{\xz}{\zeta}

\newcommand{\xk}{\kappa}

\newcommand{\xm}{\mu}
\newcommand{\xn}{\nu}

\newcommand{\xr}{\rho}
\newcommand{\xs}{\sigma}

\newcommand{\xf}{\phi}
\newcommand{\xF}{\Phi}

\newcommand{\xO}{\Omega}



%



\newcommand{\tp}{{\tau_+(s,\xm)}}
\newcommand{\tm}{{\tau_-(s,\xm)}}
\newcommand{\Hsg}{H_0^s(\Omega;|x|^\gamma)}

\newcommand{\Hsgz}{ H_0^s(\Omega \setminus \{0\};|x|^\gamma)}

\newcommand{\Hm}{\mathbf{H}_{\mu,0}^s(\Omega)}

\newcommand{\scI}{\mathscr{I}}

\newcommand{\1}{\textbf{1}}
\def\bal#1\eal{\small\begin{align*}#1\end{align*}\normalsize}
\def\ba#1\ea{\small\begin{align}#1\end{align}\normalsize}
\DeclareMathOperator*{\esssup}{ess\,sup}
\numberwithin{equation}{section}

\def\XXint#1#2#3{{\setbox0=\hbox{$#1{#2#3}{\int}$}
\vcenter{\hbox{$#2#3$}}\kern-.5\wd0}}

\begin{document}

\title[Fractional Hardy Poisson problem ]{Poisson  Problems involving fractional Hardy operators and measures}
\author[H. Chen]{Huyuan Chen}
\address{Huyuan Chen, Department of Mathematics, Jiangxi Normal University, Nanchang 330022, China}
\email{chenhuyuan@yeah.net}

\author[K. T. Gkikas]{Konstantinos T. Gkikas}
\address{K.T. Gkikas, Department of Mathematics, University of the Aegean, 
83200 Karlovassi, Samos, Greece\newline
Department of Mathematics, National and Kapodistrian University of Athens, 15784 Athens, Greece
}
\email{kgkikas@aegean.gr}

\author[P.T. Nguyen]{Phuoc-Tai Nguyen}
\address{Phuoc-Tai Nguyen, Department of Mathematics and Statistics, Masaryk University, Brno, Czech Republic}
\email{ptnguyen@math.muni.cz}


\maketitle

\begin{abstract}
In this paper, we study the Poisson problem involving a fractional Hardy operator and a measure source. The complex interplay between the nonlocal nature of the operator, the peculiar effect of the singular potential and the measure source induces several new fundamental difficulties in comparison with the local case. To overcome these difficulties, we perform a careful analysis of the dual operator in the weighted distributional sense and establish fine properties of the associated function spaces, which in turn allow us to formulate the Poisson problem in an appropriate framework. In light of the close connection between the Poisson problem and its dual problem, we are able to establish various aspects of the theory for the Poisson problem including the solvability, a priori estimates, variants of Kato's inequality and regularity results.
\end{abstract}

\bigskip

{\footnotesize \textit{Key words:  Poisson problem; Fractional Hardy Laplacian; Radon measure, Kato's inequality. }
	
	\smallskip
	
	 \textit{Mathematics Subject Classification: 35R11; 35J70; 35B40. }}

\tableofcontents

\section{Introduction}
\subsection{Overview of the literature}
The past decades have witnessed an increasing number of significant developments in the research of elliptic equations involving Hardy type operators due to their applications in various scientific disciplines. The effect of Hardy operators is elusive and  cannot be viewed simply as a lower order perturbation term of $(-\Delta)^s$. In this paper, we  devote special attention to the fractional Hardy operator of the form
\bal 
\CL_\mu^s  := (-\Delta)^s    +\frac{\mu}{|x|^{2s}},
\eal
which is constituted by two terms. The first one is the fractional Laplace operator $ (-\Delta)^s$, $s\in (0,1)$, defined by
\bal
(-\Delta)^s  u(x):= C_{N,s}\lim_{\epsilon\to 0^+} \int_{\R^N\setminus B_\epsilon(x) }\frac{ u(x)-
	u(y)}{|x-y|^{N+2s}}  dy.
\eal
Here $B_\epsilon(x)$ is the ball with center $x \in \R^N$ ($N \geq 2$) and radius $\epsilon>0$, and
\ba \label{CNs} C_{N,s}:=2^{2s}\pi^{-\frac N2}s\frac{\Gamma(\frac{N+2s}2)}{\Gamma(1-s)}>0
\ea
with $\Gamma$ being the Gamma function. The second term is the Hardy potential $\frac{\mu}{|x|^{2s}}$ which is singular at the origin. The value of the parameter $\mu \in \R$ has a profound influence on the analysis of $\CL_\mu^s$.

The Hardy operator $\CL_\mu^1:=-\Delta+ \mu |x|^{-2}$, which is the local version of $\CL_\mu^s$, 
appears in numerous contexts such as combustion models \cite{G}, quantum mechanic \cite{LL, KSWW-75} and control theory \cite{E,VZ}. The heat equation involving $\CL_\mu^1$ was first studied in \cite{VazZua-2000}. Sharp two-sided estimates for the heat kernel associated to $\CL_\mu^1$ was established in \cite{FilMosTer}. The effect of the Hardy potential on the existence and finite time blow-up solutions to Schr\"odinger equations was analyzed in \cite{Suz-16,MNN}. Singular solutions to semilinear elliptic equations with Hardy potentials have been studied in many papers; see, e.g.,  \cite{GV,C,ChVe,CQZ}. The topic on elliptic equations has been diversified in different directions, including \cite{FMT} concerning spectral properties of Hardy potentials with multipolar inverse-square potentials, \cite{ChVe1} for semilinear equations with Hardy potentials singular on the boundary, and \cite{D, GkiNg-linear, GkiNg-absorption, GkiNg-source} for equations involving more general potentials blowing up on a submanifold.

The investigation of the fractional Hardy operator $\CL_\mu^s$, $s\in(0,1)$, belongs to one of the hot topics in the area of PDE because of its wide-ranging interest to various fields in Mathematics and Physics. For instance, it is motivated by physical models related to
relativistic Schr\"odinger operator with Coulomb potential (see \cite{NRS,FLS1}) and by the study of Hardy inequalities and Hardy-Lieb-Thirring inequalities (see, e.g., \cite{FLS-2008,Fra-2009,tz}).  

The operator $\CL_\mu^s$ possesses intriguing properties. On the one hand, it bears analogous properties as the classical operator $\CL_\mu^1$. More precisely, $\CL_\mu^s$ is closely related to the fractional Hardy inequality
\ba \label{mu_00}
\frac{C_{N,s}}{2}\int_{\R^N}\int_{\R^N} \frac{|\varphi(x)-\varphi(y)|^2}{|x-y|^{N+2s}}dydx + \mu_0 \int_{\R^N} \frac{|\varphi(x)|^2}{|x|^{2s}}dx \geq 0, \quad \forall \varphi \in C_0^\infty(\R^N),
\ea
where the sharp constant in \eqref{mu_00} is explicitly determined by  (see, e.g., \cite{FLS-2008}) 
\bal
	\mu_0 =-2^{2s}\frac{\Gamma^2(\frac{N+2s}4)}{\Gamma^2(\frac{N-2s}{4})}.
\eal
Therefore when $\mu\geq \mu_0$,  $\CL^s_\mu$ is positive definite. Moreover, for $\mu \neq 0$, since the Hardy potential $\mu|x|^{-2s}$ is homogeneous with the same degree $-2s$ as $(-\Delta)^s$, it is is critical to the validity of the classical theory. On the other hand, unlike the local case, the nonlocality  of $(-\Delta)^s$ in interaction with the Hardy potential yields new types of difficulties in both methods employed and the computation level. 

Further properties of $\CL_\mu^s$ can be found in \cite{MY}. Sharp estimates for the heat kernel associated to $\CL_\mu^s$ were established in  \cite{B,JW}, which play an important role in the study of the corresponding Green function in the whole space $\R^N$ (see \cite{BBGM}). 

Recently,  it was shown in \cite[Proposition 1.2]{CW} that  for $\mu\geq \mu_0$, the equation
\bal \CL_\mu^s u=0\quad{\rm in}\ \ \R^N\setminus \{0\}
\eal
has two distinct radial solutions
\ba\label{fu} \Phi_{s,\mu}(x):=\left\{
\BAL
&|x|^{\tau_-(s,\mu)}\quad
	&&\text{if }  \mu>\mu_0\\
&|x|^{-\frac{N-2s}{2}}|\ln|x|| \quad  &&\text{if } \mu=\mu_0
\EAL
\right.\quad   \text{and}\ \ \Gamma_{s,\mu}(x):=|x|^{\tau_+(s,\mu)} \ \ \text{for } x \in \R^N \setminus \{0\},
\ea
where $\tau_-(s,\mu)  \leq  \tau_+(s,\mu)$.
The map $\mu\in[\mu_0,2s)\mapsto \tau_+(s,\mu)$ is continuous and increasing, while the map  $\mu\in[\mu_0,2s)\mapsto \tau_-(s,\mu)$ is continuous and decreasing. Moreover, 
\ba \label{tptm} \BAL
	&\tm+\tp =2s-N \quad  \text{for all }   \mu \geq \mu_0,\\
	&\tau_-(s,\mu_0)=\tau_+(s,\mu_0)=\frac{2s-N}2,\quad\
	\tau_-(s,0)=2s-N, \quad\ \tau_+(s,0)=0,  \\[1.5mm]
	&\lim_{\mu\to+\infty} \tm=-N\quad \text{ and}\quad \lim_{\mu\to+\infty} \tp=2s.
\EAL \ea

\textit{In the remaining of the paper, when there is no ambiguity, we write for short $\tau_+$ and $\tau_-$ instead of $\tau_+(s,\mu)$ and $\tau_-(s,\mu)$. }

It was also proved in \cite{CW} that
\bal
\int_{\R^N}\Phi_{s,\mu}   (-\Delta)^s_{\tau_+}\xi \, dx  =c_{s,\mu}\xi(0), \quad \forall\, \xi\in C^2_c(\R^N),
\eal
where $c_{s,\mu}>0$ and $(-\Delta)^s_{\tau_+}$ denotes the dual of the operator of $\CL^s_\mu$, which is  a weighted fractional Laplace operator  given by
\bal
(-\Delta)^s_{\tau_+} v(x):=
C_{N,s}\lim_{\epsilon\to0^+} \int_{\R^N\setminus B_\epsilon(x) }\frac{v(x)-
v(y)}{|x-y|^{N+2s}} \, |y|^{\tau_+}dy.
\eal
In addition, via the above weighted distributional form, isolated singularities for solutions of nonhomogeneous equation
\bal
\CL_\mu^s u= f \quad
{\rm in}\ \ \Omega\setminus\{0\}, \qquad
u\geq0 \quad   \text{ in }  \R^N\setminus  \Omega,
\eal
have been classified  under an optimal assumption for nonnegative function $f\in C^\beta_{\loc}(\bar \Omega\setminus\{0\})$ with $\beta\in(0,1)$.

For semilinear equations  with fractional Hardy potentials, we refer to \cite{F,M,WYZ,NRS}.
\medskip

\subsection{Introduction of the problem and main results}
Motivated by the above works, in the present paper, we aim to establish the existence, uniqueness  and qualitative properties of solutions to the Poisson problem involving the Hardy potential
\ba\label{eq 1.0}
\left\{ \BAL
\CL_\xm^s u &=\nu \quad &&\text{in }\;\xO,\\
u &= 0 \quad &&\text{in }\, \BBR^N\setminus\xO,
\EAL \right.
\ea
where  $\Omega\subset\R^N$ ($N\geq 2$) is a bounded open set containing the origin  and $\nu$ is a Radon measure on $\Omega$.

Problem \eqref{eq 1.0} has the following notable features. \smallskip

$\bullet$ To our knowledge, the existence of the Green function associated to $\CL_\mu^s$ in $\Omega$ has not been known in the literature, hence methods based on the Green representation cannot be applied to the study of \eqref{eq 1.0}. Our approach in this paper, inspired by \cite{CW}, is to analyze the associated weighted fractional Laplace operator $(-\Delta)^s_{\gamma}$ which is defined by
\bal
(-\Delta)^s_{\gamma} v(x):=C_{N,s}\lim_{\epsilon \to 0^+} \int_{\R^N\setminus B_\epsilon(x) }\frac{v(x)-
	v(y)}{|x-y|^{N+2s}} \, |y|^\gamma dy,
\eal
where  $\gamma\in\big[ -\frac{N-2s}{2}, 2s\big)$. Note that  $(-\Delta)^s_0$ reduces to the well-known fractional Laplace operator. From the integral-differential form of the weighted fractional Laplace operator, a natural restriction for the  function $v$ is
\bal \|v\|_{L_{2s-\gamma}(\R^N)}:=\int_{\R^N}\frac{|v(x)|}{(1+|x|)^{N+2s-\gamma}} dx<+\infty.
\eal
In light of the crucial link between $\CL_\mu^s$ and $(-\Delta)_{\gamma}^s$, the study of  problem \eqref{eq 1.0} is closely connected to the investigation of problem
\ba \label{eq:1-1}
\left\{
\BAL
(-\xD)^s_\gamma u &=f  \ \ && \text{in }  \Omega,  \\[0.5mm]
\qquad\quad u &=0  &&  \text{in } \R^N\setminus \Omega,
\EAL
\right.
\ea
where   $f:\xO\to\R$ is a measurable function. 

$\bullet$ The measure source $\nu$ requires to formulate the problem in an appropriate weak sense. Moreover, since the Hardy potential is singular at the origin, solutions to \eqref{eq 1.0} may exhibit a singularity profile near the origin, therefore we impose a condition regarding the behavior of test functions near the origin to guarantee the meaning of the weak formulation.

$\bullet$ The combined effect of the Hardy potential and the concentration of the source complicates the construction of solutions to problem \eqref{eq 1.0}. Therefore, for any given measure source on the whole domain $\Omega$, we will decompose it into two measures: a measure concentrated away from the origin and a Dirac measure concentrated at the origin. The case of Dirac source was treated in \cite{CW}, hence due to the linearity, it is sufficient to deal with measure source concentrated in $\Omega \setminus \{0\}$. \smallskip

Let us introduce the function spaces that we will work on in studying problem \eqref{eq 1.0} and problem \eqref{eq:1-1}. For $\xg\in[ \frac{2s-N}{2}, 2s),$ we denote by  $H_0^s(\xO;|x|^\gamma)$ the closure of the functions in $C^\infty(\R^N)$ with the compact support in $\Omega$  under the norm
\ba 
\norm{u}_{s,\gamma} :=\sqrt{\int_{\R^N}\int_{\R^N}\frac{|u(x)-u(y)|^2}{|x-y|^{N+2s}}|y|^\xg dy |x|^\xg dx}.
\ea
Note that $H_0^s(\xO;|x|^\gamma)$ is a Hilbert space with the inner product
\ba
\langle u,v\rangle_{s,\gamma}:=\int_{\R^N}\int_{\R^N}\frac{\big(u(x)-u(y)\big)\big(v(x)-v(y)\big)}{|x-y|^{N+2s}}|y|^\xg dy |x|^\xg dx.
\ea
For $\mu \geq \mu_0$, let  $\mathbf{H}^{s}_{\mu,0}(\xO)$ be the closure of the functions in $C^\infty(\R^N)$ with the compact support in $\Omega$ under the norm
\bal 
\norm{u}_{\mu}:=\sqrt{\frac{C_{N,s}}{2}\int_{\R^N}\int_{\R^N}\frac{|u(x)-u(y)|^2}{|x-y|^{N+2s}}  dy  dx+\mu\int_\Omega \frac{u(x)^2}{|x|^{2s}}dx}.
\eal
  This is a metric space with metric induced by the following quantity 
\ba \label{innermu}
\ll u,v\gg_{\mu}:=\frac{C_{N,s}}{2}\int_{\R^N}\int_{\R^N}\frac{\big(u(x)-u(y)\big)\big(v(x)-v(y)\big)}{|x-y|^{N+2s}} dy   dx+\mu  \int_\Omega \frac{u(x) v(x)}{|x|^{2s}}dx.
\ea
 When $\xm>\xm_0$,  $\mathbf{H}^{s}_{\mu,0}(\xO)$ is a Hilbert space with the inner product defined in \eqref{innermu}.   In the critical case $\xm=\xm_0,$ $\mathbf{H}^{s}_{\mu_0,0}(\xO)$ is no longer Hilbert space and we denote by the same notation $\mathbf{H}^{s}_{\mu_0,0}(\xO)$ its standard completion.  See Subsection \ref{subsec:Hm} for more details. 

Our first main result depicts important properties of, as well as deciphers the relation between, the above spaces.
\begin{theorem}\label{th:main-1}
 Assume that  $\xO$ is a  bounded Lipschitz domain containing the origin. 
	
	$(i)$ For any $\xg\in[ \frac{2s-N}{2}, 2s)$ and $\mu\geq\mu_0$,  the space  $C_0^\infty(\Omega\setminus\{0\})$ is dense in $\Hsg$ and in $\mathbf{H}_{\mu,0}^s(\Omega)$.
	 	
	$(ii)$ For any $\xg\in[ \frac{2s-N}{2}, 2s)$, there is $\mu\geq \mu_0$ such that $\tp=\gamma$ and
	\ba 
	\Hsg =\big\{|x|^{-\gamma}u: u \in \Hm \big\}.
	\ea
	
	$(iii)$ 
	  Let $\gamma\in[\frac{2s-N}{2},2s)$, $\beta<2s$ and $1\leq q<\min\Big\{  \frac{2N-2\beta}{N-2s},\ \frac{2N }{N-2s}\Big\}$. Then there exists a positive constant $c=c(N,\Omega,s,\gamma,\beta,q)$ such that
\ba 
	\big\| |\cdot|^{\gamma}v \big\|_{L^q(\Omega;|x|^{-\beta})} \leq c\, \| v \|_{s,\gamma}, \quad \forall v \in \Hsg.
\ea
\end{theorem}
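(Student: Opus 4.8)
The plan is to prove the three items in the order stated, each feeding the next: (i) is a removability statement for the point $\{0\}$, (ii) is a ground‑state (Doob) substitution conjugating $\CL^s_\mu$ to the weighted operator $(-\Delta)^s_{\tau_+}$, and (iii) then follows, via (ii), from a fractional Hardy--Sobolev inequality. For (i), it is enough to approximate, in the relevant norm, a fixed $\phi\in C^\infty(\R^N)$ with compact support in $\Omega$ by functions vanishing near the origin. Take cut‑offs $\zeta_n\in C^\infty(\R^N)$ with $\zeta_n\equiv0$ on $B_{1/n}$, $\zeta_n\equiv1$ off $B_{2/n}$, $0\le\zeta_n\le1$, $|\nabla\zeta_n|\le Cn$, and set $\phi_n:=\zeta_n\phi\in C_0^\infty(\Omega\setminus\{0\})$. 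Writing $g_n:=(1-\zeta_n)\phi$, splitting the relevant Gagliardo double integral according to whether both variables lie in $B_{4/n}$, and using $|g_n(x)-g_n(y)|\le\min\{Cn|x-y|,\,C\}$ together with the local integrability of $|x|^\gamma$ (here $\gamma>-N$), one gets $\norm{g_n}_\mu^2=O(n^{2s-N})\to0$ for $\Hm$ (all $\mu\ge\mu_0$, using \eqref{mu_00} to see $\norm{g_n}_{\mu_0}^2\ge0$) and $\norm{g_n}_{s,\gamma}^2=O(n^{2s-2\gamma-N})\to0$ for $\Hsg$ provided $\gamma>\tfrac{2s-N}{2}$. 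The only case requiring more is $\Hsg$ with $\gamma=\tfrac{2s-N}{2}$: there the last estimate is merely scale invariant, and one must use a logarithmic cut‑off (equal to $1$ off a fixed ball, vanishing on $B_{r_n}$ with $r_n\to0$, interpolating logarithmically), whose weighted Gagliardo energy is $O(1/\log(1/r_n))\to0$; this is the fractional analogue of a point carrying zero capacity in a critical energy space, consistent with the logarithm in $\Phi_{s,\mu_0}$.

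For (ii): by the properties of $\tau_\pm$ recalled above, $\mu\mapsto\tau_+(s,\mu)$ maps $[\mu_0,\infty)$ continuously and strictly increasingly onto $[\tfrac{2s-N}{2},2s)$, so each $\gamma$ in that interval is $\tau_+(s,\mu)$ for a unique $\mu\ge\mu_0$. The crux is the pointwise identity
\[
\CL^s_\mu\big(|x|^{\tau_+}v\big)=(-\Delta)^s_{\tau_+}v\quad\text{in }\R^N\setminus\{0\},\qquad v\in C_0^\infty(\Omega\setminus\{0\}),
\]
obtained by writing, for $x\neq0$,
\[
(-\Delta)^s\big(|\cdot|^{\tau_+}v\big)(x)=v(x)\,(-\Delta)^s\big(|\cdot|^{\tau_+}\big)(x)+C_{N,s}\,\mathrm{p.v.}\!\int_{\R^N}\frac{|y|^{\tau_+}\big(v(x)-v(y)\big)}{|x-y|^{N+2s}}\,dy,
\]
and inserting $(-\Delta)^s(|\cdot|^{\tau_+})(x)=-\mu\,|x|^{\tau_+-2s}$, which is precisely $\CL^s_\mu\Gamma_{s,\mu}=0$. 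Pairing this identity (in $L^2$) with $|x|^{\tau_+}w$ and symmetrising the resulting double integrals yields
\[
\ll |x|^{\tau_+}v,\;|x|^{\tau_+}w\gg_\mu=\tfrac{C_{N,s}}{2}\,\langle v,w\rangle_{s,\gamma},\qquad v,w\in C_0^\infty(\Omega\setminus\{0\}).
\]
Thus $v\mapsto|x|^{\tau_+}v$ is, up to the constant $\sqrt{C_{N,s}/2}$, a norm‑preserving bijection of $C_0^\infty(\Omega\setminus\{0\})$ onto itself; since by (i) this space is dense in both $\Hsg$ and $\Hm$, the map extends to an isometric (up to that constant) isomorphism $\Hsg\to\Hm$, which reads $\Hsg=\{|x|^{-\gamma}u:\,u\in\Hm\}$.

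For (iii), assume first $\gamma>\tfrac{2s-N}{2}$ and let $\mu>\mu_0$ be the corresponding parameter; set $u:=|x|^\gamma v$, so by (ii) $u\in\Hm$, $\norm{u}_\mu=\sqrt{C_{N,s}/2}\,\norm{v}_{s,\gamma}$, and $\big\| |\cdot|^\gamma v \big\|_{L^q(\Omega;|x|^{-\beta})}=\norm{u}_{L^q(\Omega;|x|^{-\beta})}$. Since $\mu>\mu_0$, combining the sharp fractional Hardy inequality \eqref{mu_00} with the definition of $\norm{\cdot}_\mu$ gives a constant $c(\mu)>0$ (blowing up as $\mu\downarrow\mu_0$) with $\norm{u}_0^2\le c(\mu)\norm{u}_\mu^2$, where $\norm{u}_0^2:=\tfrac{C_{N,s}}{2}\int_{\R^N}\int_{\R^N}\frac{|u(x)-u(y)|^2}{|x-y|^{N+2s}}\,dy\,dx$; in particular $u\in H^s_0(\Omega)$. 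It thus remains to bound $\norm{u}_{L^q(\Omega;|x|^{-\beta})}$ by $c\,\norm{u}_0$. If $0\le\beta<2s$ one uses the fractional Hardy--Sobolev inequality $\norm{u}_{L^{q^*_\beta}(\R^N;|x|^{-\beta})}\le c\,\norm{u}_0$ with $q^*_\beta:=\tfrac{2(N-\beta)}{N-2s}$, followed by H\"older's inequality on the finite measure space $(\Omega,|x|^{-\beta}\,dx)$ (finite since $\beta<N$), which covers all $q\le q^*_\beta$; if $\beta\le0$, the weight $|x|^{-\beta}$ is bounded on $\Omega$ and the ordinary fractional Sobolev embedding $\norm{u}_{L^{2^*_s}(\R^N)}\le c\,\norm{u}_0$, $2^*_s:=\tfrac{2N}{N-2s}$, together with H\"older covers all $q\le 2^*_s$. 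As $\min\{q^*_\beta,2^*_s\}$ equals $\tfrac{2N-2\beta}{N-2s}$ when $\beta\ge0$ and $\tfrac{2N}{N-2s}$ when $\beta\le0$, this is the stated range.

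The step requiring the most work is the endpoint $\gamma=\tfrac{2s-N}{2}$ (i.e. $\mu=\mu_0$): there $\mathbf{H}_{\mu_0,0}^s(\Omega)$ is only the abstract completion, the coercivity $\norm{\cdot}_0^2\lesssim\norm{\cdot}_{\mu_0}^2$ fails, and the argument above collapses. I would handle it by the improved (Maz'ya‑type) fractional Hardy--Sobolev inequality on the bounded domain $\Omega$ with the critical Hardy constant,
\[
\norm{u}_{L^q(\Omega;|x|^{-\beta})}\ \le\ c\,\Big(\tfrac{C_{N,s}}{2}\int_{\R^N}\int_{\R^N}\frac{|u(x)-u(y)|^2}{|x-y|^{N+2s}}\,dy\,dx+\mu_0\int_\Omega\frac{u(x)^2}{|x|^{2s}}\,dx\Big)^{1/2},
\]
for $u\in\mathbf{H}_{\mu_0,0}^s(\Omega)$ and every $q$ strictly below $\min\{q^*_\beta,2^*_s\}$ — which is exactly why the hypothesis on $q$ is a strict inequality — and then transport back through the isometry of (ii); an alternative is to bypass (ii) in this case and establish directly a weighted Caffarelli--Kohn--Nirenberg‑type inequality $\norm{v}_{L^q(\Omega;|x|^{q\gamma-\beta})}\le c\,\norm{v}_{s,\gamma}$ for the weighted Gagliardo seminorm. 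Securing this critical inequality with the sharp exponent range, and confirming that the resulting constant degenerates as $\gamma\downarrow\tfrac{2s-N}{2}$ (consistent with the stated dependence of $c$ on $\gamma$), is where the genuine difficulty lies.
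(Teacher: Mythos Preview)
Your proposal is correct in outline and part (ii) is essentially the paper's argument (norm identity on $C_0^\infty(\Omega\setminus\{0\})$ plus density), but parts (i) and (iii) take different routes from the paper, and in (iii) the difference matters.

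For (i), the paper uses a \emph{single} logarithmic cut--off $\eta_h$ (equal to $1$ on $\{|x|>h\}$, $0$ on $\{|x|<h^2\}$, linear in $\ln|x|$ in between) for \emph{every} $\gamma\in[\tfrac{2s-N}{2},2s)$, and carries out a ten--term estimate of the weighted Gagliardo energy of $\eta_h$ over $B_{2h}\times B_{2h}$, yielding $O(|\ln h|^{-1})$ uniformly in $\gamma$. Your two--speed scheme (power cut--off for $\gamma>\tfrac{2s-N}{2}$, logarithmic only at the endpoint) also works and gives the sharper $O(n^{2s-2\gamma-N})$ in the subcritical range, but since the logarithmic computation is needed anyway at the endpoint, the paper just does it once.

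For (iii), the substantive difference is at $\mu=\mu_0$. You go through the norm equivalence $\norm{u}_0\le c(\mu)\norm{u}_\mu$ and the fractional Hardy--Sobolev inequality, which forces a separate treatment of $\mu=\mu_0$ that you leave to an unproved ``improved Maz'ya--type'' inequality. The paper avoids this split entirely: for \emph{all} $\mu\ge\mu_0$ it uses the improved Hardy inequality of Tzirakis,
\[
\int_\Omega\frac{u^2\,X(|x|)^2}{|x|^{2s}}\,dx\le C\,\norm{u}_{\mu_0}^2\le C\,\norm{u}_\mu^2,
\]
together with the embedding $\mathbf{H}^s_{\mu_0,0}(\Omega)\hookrightarrow L^q(\Omega)$ for $q<2^*_s$ (Frank). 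For $\beta\le0$ this already gives the claim; for $0<\beta<2s$ one interpolates by H\"older,
\[
\int_\Omega|x|^{-\beta}|u|^q\,dx\le\Big(\int_\Omega|x|^{-\tilde\beta}|u|^2\,dx\Big)^{\beta/\tilde\beta}\Big(\int_\Omega|u|^{\frac{\tilde\beta}{\tilde\beta-\beta}(q-\frac{2\beta}{\tilde\beta})}\,dx\Big)^{1-\beta/\tilde\beta},
\]
with $\tilde\beta<2s$ close to $2s$; then $|x|^{-\tilde\beta}\le C|x|^{-2s}X(|x|)^2$ on $\Omega$ controls the first factor via Tzirakis, and the exponent in the second factor is $<2^*_s$ exactly when $q<\tfrac{2(N-\beta)}{N-2s}$. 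This produces a constant uniform in $\mu\ge\mu_0$, so the ``genuine difficulty'' you flag at the endpoint is in fact resolved by this interpolation trick rather than by a new Maz'ya--type input.
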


The proof of statement (i) in Theorem \ref{th:main-1} is based on the choice of a special cut-off function and  some delicate estimates, which enable us to deal with the whole range $[\frac{2s-N}{2},2s)$. This result is tremendously useful in our analysis as, in many places, it allows us to work on smooth functions with compact support in $\Omega \setminus \{0\}$ instead of functions in $\Hsg$ or in $\Hm$; hence we are able to dwindle or to avoid serious issues coming from the singularity at $0$.   Statement (ii) shows the one-to-one correspondence between $\Hsg$ and $\Hm$ under the transformation $v=|x|^{-\gamma}u$ for $u \in \Hm$ and $v \in \Hsg$, which allows us to associate problem \eqref{eq 1.0} to problem \eqref{eq:1-1}. Statement (iii) is derived from Hardy inequalities and the equivalence between the norm in $\Hsg$ and the norm in $\Hm$. For related results on weighted fractional spaces, we refer the reader to \cite{DMPS,DV}.

We introduce the notion of variational solutions to \eqref{eq:1-1}.
\begin{definition}\label{def:varisol}
 Assume that  $\xg\in[ \frac{2s-N}{2}, 2s)$. A function $u$ is called a \textit{variational solution} to \eqref{eq:1-1}	if  $u \in \Hsg$ and
 \ba \label{varisol-form}
	\langle u,\xi\rangle_{s,\gamma} =(f,\xi)_\gamma \quad \forall \, \xi\in \Hsg,
 \ea
	where
\ba \label{innergamma}
(f,\xi)_\gamma:=\int_{\xO}f\xi |x|^\gamma dx.
\ea
\end{definition}

The next theorem gives the existence of a variational solution to problem \eqref{eq:1-1} and is obtained by using the standard variational method in combination with statement (iii) of Theorem \ref{th:main-1}. In addition, a Kato type inequality for the variational solution is also provided, which leads to the uniqueness result.

\begin{theorem}\label{th:main-2}
Let  $\xg\in[ \frac{2s-N}{2}, 2s)$, $\alpha\in\R$, $p>\max\Big\{  \frac{2N}{N+2s},\, \frac{2N+2\alpha}{N+2s},\ 1+\frac{\alpha}{2s}\Big\}$
 and $  f\in L^p(\Omega;|x|^{\alpha})$.
Then problem \eqref{eq:1-1} has a unique variational solution $u$. Moreover, there exists a constant $c=c(N,\Omega,s,\gamma,\alpha,p)$ such that 
\ba \label{varisol:est-1}
\norm{u}_{s,\gamma} \leq c\, \| f\|_{L^p(\Omega;|x|^{\alpha})}.
\ea
In addition, the following Kato type inequality holds
	\ba\label{kato1}
	\langle u^+,\xi \rangle_{s,\gamma}
	\leq (f\sign^+(u),\xi)_{\gamma}, \quad \forall \, 0\leq \xi\in \Hsg.
	\ea
\end{theorem}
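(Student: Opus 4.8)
The plan is to treat the three assertions in turn, the first two by the direct method of the calculus of variations together with part (iii) of Theorem~\ref{th:main-1}, and the Kato inequality by a convexity/truncation argument.

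\textbf{Existence and the a priori estimate \eqref{varisol:est-1}.} I would define the functional
\[
\CJ(v):=\tfrac12\norm{v}_{s,\gamma}^2-(f,v)_\gamma,\qquad v\in\Hsg,
\]
and show it is well defined, coercive, and weakly lower semicontinuous on the Hilbert space $\Hsg$. The only nontrivial point is that the linear form $v\mapsto(f,v)_\gamma=\int_\Omega f v|x|^\gamma\,dx$ is continuous on $\Hsg$; this is exactly where part (iii) of Theorem~\ref{th:main-1} enters. Indeed, writing $\int_\Omega f v|x|^\gamma\,dx=\int_\Omega f|x|^{\gamma/q'+\beta/q}\cdot\big(|x|^\gamma v\big)|x|^{-\beta/q}\,dx$ and applying H\"older's inequality with exponents $q,q'$, one bounds it by $\big\||\cdot|^{\gamma/q'+\beta/q}f\big\|_{L^{q'}(\Omega)}\,\big\||\cdot|^\gamma v\big\|_{L^q(\Omega;|x|^{-\beta})}\le c\,\big\||\cdot|^{\gamma/q'+\beta/q}f\big\|_{L^{q'}(\Omega)}\norm{v}_{s,\gamma}$. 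One then has to choose $\beta<2s$ and $q\in[1,\min\{\frac{2N-2\beta}{N-2s},\frac{2N}{N-2s}\})$ so that $q'=p$ and so that the weight $|x|^{(\gamma/q'+\beta/q)}f$ lies in $L^p(\Omega;dx)$ given $f\in L^p(\Omega;|x|^\alpha)$; translating back, one needs $p'(\gamma/p+\beta/p')\le$ (something compatible with) $\alpha$ near $0$, and the hypotheses $p>\frac{2N}{N+2s}$, $p>\frac{2N+2\alpha}{N+2s}$, $p>1+\frac{\alpha}{2s}$ are precisely the constraints that make such a $(\beta,q)$ available (boundedness of $\Omega$ handles the region away from the origin). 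Once the linear form is continuous, coercivity of $\CJ$ is immediate from $\CJ(v)\ge\tfrac12\norm{v}_{s,\gamma}^2-c\,\|f\|_{L^p(\Omega;|x|^\alpha)}\norm{v}_{s,\gamma}$, so a minimizer $u$ exists; its Euler--Lagrange equation is \eqref{varisol-form}, and testing \eqref{varisol-form} with $\xi=u$ and reusing the same bound gives $\norm{u}_{s,\gamma}^2=(f,u)_\gamma\le c\,\|f\|_{L^p(\Omega;|x|^\alpha)}\norm{u}_{s,\gamma}$, i.e.\ \eqref{varisol:est-1}. Uniqueness for the linear problem is immediate from the strict convexity of $\frac12\norm\cdot_{s,\gamma}^2$ (equivalently, subtract two solutions and test with the difference); it will also follow from the Kato inequality.

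\textbf{The Kato inequality \eqref{kato1}.} The strategy is the standard one adapted to the weighted bilinear form. By part (i) of Theorem~\ref{th:main-1} it suffices, by density, to prove \eqref{kato1} for $0\le\xi\in C_0^\infty(\Omega\setminus\{0\})$, and to approximate $u$ in $\Hsg$ by a sequence in $C_0^\infty(\Omega\setminus\{0\})$ as well; the difficulty of the singularity at the origin is thereby pushed onto the a priori control already obtained. For smooth functions one uses the elementary pointwise convexity inequality
\[
\big(\Phi(a)-\Phi(b)\big)\big(c-d\big)\le \Phi'(c)\big(a-b\big)\big(c-d\big)\quad\text{is \emph{not} quite what is needed;}
\]
rather the relevant inequality is that for a smooth convex $\Phi$ with $0\le\Phi'\le1$ (a mollification of $t\mapsto t^+$), one has $\big(\Phi(u(x))-\Phi(u(y))\big)\big(\xi(x)-\xi(y)\big)\le\Phi'(u(x))\big(u(x)-u(y)\big)\big(\xi(x)-\xi(y)\big)$ after symmetrizing in $x\leftrightarrow y$, which upon multiplying by $|x-y|^{-N-2s}|x|^\gamma|y|^\gamma$ and integrating yields $\langle\Phi(u),\xi\rangle_{s,\gamma}\le\langle u,\Phi'(u)\xi\rangle_{s,\gamma}$. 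Since $\Phi'(u)\xi\in\Hsg$ (here one checks $\Phi'(u)\xi$ has the right regularity, using $|\Phi'|\le1$, $\Phi'$ Lipschitz, and $\xi$ smooth), the right-hand side equals $(f,\Phi'(u)\xi)_\gamma$ by \eqref{varisol-form}. Letting the mollification parameter tend to $0$ so that $\Phi(u)\to u^+$ in $\Hsg$ and $\Phi'(u)\to\sign^+(u)$ in the appropriate sense gives $\langle u^+,\xi\rangle_{s,\gamma}\le(f\sign^+(u),\xi)_\gamma$, which is \eqref{kato1}. Applying \eqref{kato1} with $f\equiv0$ to the difference of two variational solutions (and using that a constant positive $\xi$ may be inserted after the density reduction, or simply noting $\langle u^+,u^+\rangle_{s,\gamma}\le 0$ forces $u^+=0$) re-proves uniqueness.

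\textbf{Main obstacle.} The genuinely delicate part is the bookkeeping in the existence proof: verifying that the three lower bounds on $p$ in the hypothesis are exactly what is needed to produce an admissible pair $(\beta,q)$ in part (iii) of Theorem~\ref{th:main-1} with $q'=p$ and with the reweighted datum $|x|^{\gamma/p+\beta/p'}f$ lying in $L^p(\Omega)$. One must separately analyze the behavior near $x=0$ (where the weights $|x|^\gamma,|x|^\alpha,|x|^{-\beta}$ interact and the Sobolev-type exponent $\frac{2N-2\beta}{N-2s}$ is the binding constraint) and away from $x=0$ (where $\Omega$ bounded and $\gamma<2s$ make all weights harmless), and check the endpoint/strict-inequality cases. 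The Kato inequality, by contrast, is routine once the density statement (i) is invoked to localize away from the origin; the mollification of $t^+$ and the passage to the limit are standard and require no new idea beyond continuity of the bilinear form under the relevant convergences.
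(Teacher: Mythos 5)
Your existence/a priori estimate argument is essentially the paper's: define $\scI(\varphi)=\tfrac12\|\varphi\|_{s,\gamma}^2-(f,\varphi)_\gamma$, prove coercivity and weak l.s.c.\ by bounding $(f,\varphi)_\gamma$ via H\"older plus Theorem~\ref{th:main-1}(iii), and minimize. The paper's H\"older split is cleaner than yours, though: it writes $\int f\varphi|x|^\gamma=\int (f|x|^{\alpha/p})(|x|^\gamma\varphi\,|x|^{-\alpha/p})$ and applies (iii) with $q=p'$ and $\beta=\alpha p'/p$, so that the three lower bounds on $p$ translate exactly to $\beta<2s$, $p'<2_s^*$, and $p'<\tfrac{2N-2\beta}{N-2s}$. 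Your split $f|x|^{\gamma/q'+\beta/q}\cdot(|x|^\gamma v)|x|^{-\beta/q}$ does not recombine to $fv|x|^\gamma$ (the powers of $|x|$ sum to $\gamma+\gamma/q'$, not $\gamma$); this is an algebra slip, easily fixed to the paper's version, and you flagged that you had not carried the bookkeeping through.

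For Kato, you take a genuinely different route. The paper tests \eqref{varisol-form} directly with $\phi=\eta_\varepsilon\zeta$, where $\eta_\varepsilon=\min\{1,\varepsilon^{-1}u^+\}$ and $0\le\zeta\in C_0^\infty(\Omega\setminus\{0\})$, and then bounds the left side from below by an explicit case-by-case decomposition of $\R^N\times\R^N$ over the sign/size regions of $u(x),u(y)$ (the Korvenp\"a\"a--Kuusi--Palatucci scheme), letting $\varepsilon\to0$ at the end. You instead use a smooth convex $\Phi$ approximating $t\mapsto t^+$ with $0\le\Phi'\le1$, the resulting ``normal contraction'' inequality $\langle\Phi(u),\xi\rangle_{s,\gamma}\le\langle u,\Phi'(u)\xi\rangle_{s,\gamma}$, then substitute $\Phi'(u)\xi\in\Hsg$ as a test function and pass to the limit. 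This is a valid and somewhat slicker mechanism, avoiding the explicit domain decomposition; the paper's route is more elementary but longer. One real gap in your write-up: the pointwise inequality you state,
\[
\big(\Phi(u(x))-\Phi(u(y))\big)\big(\xi(x)-\xi(y)\big)\le\Phi'(u(x))\big(u(x)-u(y)\big)\big(\xi(x)-\xi(y)\big),
\]
is false in general, and its $x\leftrightarrow y$ symmetrization
\[
\big(\Phi(a)-\Phi(b)\big)(\xi_1-\xi_2)\le\tfrac12\big[\Phi'(a)+\Phi'(b)\big](a-b)(\xi_1-\xi_2)
\]
also fails unless $\Phi'$ is convex (which a mollification of $\sign^+$ is not). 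What does hold, for $\Phi$ convex and $\xi_1,\xi_2\ge0$, is
\[
\big(\Phi(a)-\Phi(b)\big)(\xi_1-\xi_2)\le(a-b)\big(\Phi'(a)\xi_1-\Phi'(b)\xi_2\big),
\]
as one sees by writing the difference as $\xi_1\big[\Phi'(a)(a-b)-(\Phi(a)-\Phi(b))\big]+\xi_2\big[\Phi'(b)(b-a)-(\Phi(b)-\Phi(a))\big]\ge0$; integrating this against the symmetric kernel does give $\langle\Phi(u),\xi\rangle_{s,\gamma}\le\langle u,\Phi'(u)\xi\rangle_{s,\gamma}$, so your conclusion is correct but the justification must be replaced by the above. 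With that repair, and a verification that $\Phi'(u)\xi\in\Hsg$ (immediate from $0\le\Phi'\le1$, $\Phi'$ Lipschitz, $\xi\in C_0^\infty(\Omega\setminus\{0\})$) and of the passage to the limit (use $\|\Phi(u)\|_{s,\gamma}\le\|u\|_{s,\gamma}$ to get weak convergence $\Phi(u)\rightharpoonup u^+$ in $\Hsg$, and dominated convergence for $(f,\Phi'(u)\xi)_\gamma$), your Kato argument goes through.
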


We remark that if $\alpha<2s$ then the condition on $p$ in Theorem \ref{th:main-2} is reduced to \bal p>\max\left\{  \frac{2N}{N+2s},\, \frac{2N+2\alpha}{N+2s}\right\}.
\eal

Now we return to the study of problem \eqref{eq 1.0}. In order to give the definition of weak solutions, we introduce the space of test functions.


\begin{definition} \label{def:weaksol}
 Assume that  $\xO\subset\BBR^N$ is a bounded domain satisfying the exterior ball condition and containing the origin.  For $b<2s-\tau_+$, we denote by $\mathbf{X}_\xm(\xO;|x|^{-b})$ the space of functions $\psi$ with the following properties:
	
	(i) $\psi\in H^{s}_0(\xO;|x|^{\tau_+});$
	
	(ii) $(-\xD)^s_{\tau_+}\psi $ exists a.e. in $\xO\setminus\{0\}$ and $\displaystyle\sup_{x\in \xO\setminus\{0\}}\big||x|^b (-\xD)^s_{\tau_+}\psi(x)\big| <+\infty$;
	
	(iii) for any compact set $K\subset\xO\setminus\{0\}$, there exist $\xd_0>0$ and $w\in L^1_{\loc}(\xO\setminus \{0\})$ such that
	\bal
	\sup_{0<\xd\leq\xd_0}|(-\Delta)^s_{\tau_+,\xd}\psi|\leq w\;\; \text{a.e. in}\;K,
	\eal
where
	\bal
	(-\Delta)^s_{\tau_+,\delta} \psi(x):=
	C_{N,s} \int_{\R^N\setminus B_{\delta}(x) }\frac{\psi(x)-
		\psi(y)}{|x-y|^{N+2s}} \, |y|^{\tau_+}dy, \quad x \in \Omega \setminus \{0\}.
	\eal 
\end{definition}

The space $\mathbf{X}_\xm(\xO;|x|^{-b})$ of test functions  plays an essential role in the construction of distributional solutions and in the derivation of variants of Kato's inequality. When $s=1$, the solution of the problem
\bal
\left\{ \BAL 
\CL_\mu^* u &=1 &&\quad \text{in }  \Omega, \\
 u &=0 &&\quad \text{on }  \partial \Omega,
\EAL \right. 
\eal
where $\CL_\mu^*$ is the dual operator of $\CL^1_\mu$ in a weighted distributional sense \cite{CQZ}, could be used directly as a test function thanks to a careful analysis of its behavior near the origin via the ODE method. Nevertheless, in the nonlocal setting, this strategy fails. 

As it will be shown later by \eqref{estLinfty} and \eqref{smoothest1}, for any $\psi\in\mathbf{X}_\xm(\xO;|x|^{-b})$, there holds
 \ba\label{estLinfty3}
|\psi(x)|\leq Cd(x)^s\quad \text{for a.e. } x \in \xO,
 \ea
 where $d(x)={\rm dist}(x,\partial\Omega)$. 
Moreover, by Lemmas  \ref{estLinftylemma}--  \ref{smoothlemma2}  below, for $\alpha\in(0,s)$,
\ba\label{test sp}
C^2_0(\xO)\subset  \mathbf{X}_\xm(\xO;|x|^{-b}) \subset L^\infty(\Omega)\cap C^s_{\loc}(\bar \Omega\setminus\{0\})\cap C^{2s+\alpha}_{\loc}(\Omega\setminus\{0\}).
\ea

For $\alpha \in \R$, we denote by $\GTM(\Omega;d(x)^s|x|^{\alpha})$ (resp. $\GTM(\Omega\setminus\{0\};d(x)^s|x|^{\alpha})$) the space of Radon measures $\nu$ on $\Omega$ (resp. $\Omega\setminus\{0\}$) such that
\bal
\| \nu\|_{\GTM(\Omega;d(x)^s|x|^{\alpha})}:=\int_{\Omega}d(x)^s|x|^{\alpha} \, d|\nu|<+\infty, \\
\big(\text{resp. } \| \nu\|_{\GTM(\Omega\setminus\{0\};d(x)^s|x|^{\alpha})}:=\int_{\Omega\setminus\{0\}}d(x)^s|x|^{\alpha} \, d|\nu|<+\infty\big)
\eal
and by $\GTM^+(\Omega;d(x)^s|x|^{\alpha})$ (resp. $\GTM^+(\Omega\setminus\{0\};d(x)^s|x|^{\alpha})$) its positive cone.

 \begin{definition}
 Assume that  $\xO\subset\BBR^N$ is a bounded domain satisfying the exterior ball condition and containing the origin.  

$(i)$ Suppose $f\in L^1(\xO; d(x)^s|x|^{\tau_+})$. A function $u$ is called a \textit{weak solution} to problem
	\ba\label{veryweaksolution} \left\{ \BAL
	\CL_\xm^s u&=f&&\quad\text{in}\;\xO, \\
	u&=0&&\quad\text{in}\;\BBR^N\setminus\xO,
	\EAL \right.
	\ea
	if for any $b<2s-\tau_+$, $u\in L^1(\xO;|x|^{-b})$ and
\bal 
	\int_\xO u(-\xD)^s_{\tau_+}\psi dx=\int_\xO f\psi |x|^{\tau_+} dx, \quad\forall\psi\in \mathbf{X}_\xm(\xO;|x|^{-b}).
\eal

(ii) Suppose $\nu \in \GTM(\Omega \setminus \{0\},d(x)^s|x|^{\tau_+})$. A function $u$ is called a \textit{weak solution} to problem
\ba\label{veryweaksolution-nu} \left\{ \BAL
\CL_\xm^s u&=\nu&&\quad\text{in}\;\xO, \\
u&=0&&\quad\text{in}\;\BBR^N\setminus\xO,
\EAL \right.
\ea
if for any $b<2s-\tau_+$, $u\in L^1(\xO;|x|^{-b})$ and
\bal 
\int_\xO u(-\xD)^s_{\tau_+}\psi dx= \int_{\Omega \setminus \{0\}}\psi |x|^{\tau_+}d\nu, \quad\forall\psi\in \mathbf{X}_\xm(\xO;|x|^{-b}).
\eal
\end{definition}

%
%

The next result deals with the case of $L^1$ source. 
	
\begin{theorem}\label{existence2}
 Assume that  $\xO\subset\BBR^N$ is a bounded domain satisfying the exterior ball condition and containing the origin and $f\in L^1(\xO;d(x)^s|x|^{\tau_+})$. Then problem \eqref{veryweaksolution} admits a unique weak solution $u=u_f$. For any $b<2s-\tau_+$,	there exists a positive constant $c=c(N,\Omega,s,\mu,b)$ such that
	\ba \label{est:apriori-1}
	\| u \|_{L^1(\Omega;|x|^{-b})} \leq c\,\| f \|_{L^1(\Omega;d(x)^s|x|^{\tau_+})}.
	\ea
	Furthermore, there holds
	\ba \label{Kato:+-1}
	\int_\xO u^+(-\xD)^s_{\tau_+}\psi dx\leq\int_\xO f\sign^+(u)\psi |x|^{\tau_+} dx,\quad\forall \, 0\leq\psi\in \mathbf{X}_\xm(\xO;|x|^{-b})
	\ea
	and
	\ba \label{Kato||-1}
	\int_\xO |u|(-\xD)^s_{\tau_+}\psi dx\leq\int_\xO f\sign(u)\psi |x|^{\tau_+} dx,\quad\forall \, 0\leq\psi\in \mathbf{X}_\xm(\xO;|x|^{-b}).
	\ea
	As a consequence, the mapping $f \mapsto u$ is nondecreasing. In particular, if $f \geq 0$ then $u \geq 0$ a.e. in $\Omega \setminus \{0\}$.
\end{theorem}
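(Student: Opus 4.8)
The plan is to follow the classical scheme for linear problems with $L^1$ data, adapted to the nonlocal weighted setting: (a) solve a family of approximate problems with bounded right-hand sides; (b) derive a uniform a priori bound in $L^1(\xO;|x|^{-b})$ by testing against solutions of the dual Dirichlet problem for $(-\xD)^s_{\tau_+}$; (c) pass to the limit; (d) inherit the Kato inequalities and positivity in the limit. Throughout, problem \eqref{veryweaksolution} is handled through the weighted operator $(-\xD)^s_{\tau_+}$ via the correspondence of Theorem~\ref{th:main-1}(ii), and the density statement of Theorem~\ref{th:main-1}(i), combined with the inclusions \eqref{test sp} and the decay \eqref{estLinfty3}, is used repeatedly to justify the integration-by-parts identities for $\psi\in\mathbf{X}_\xm(\xO;|x|^{-b})$: one approximates $\psi$ in the $H^s_0(\xO;|x|^{\tau_+})$-norm by functions in $C^\infty_0(\xO\setminus\{0\})$, for which the identities are elementary, and passes to the limit using condition (iii) of Definition~\ref{def:weaksol}. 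For (a), set $f_n:=\max\{-n,\min\{f,n\}\}$, so that $f_n\to f$ in $L^1(\xO;d(x)^s|x|^{\tau_+})$ and a.e.\ in $\xO$, with $|f_n|\le|f|$; each $f_n$ is bounded, hence lies in a weighted Lebesgue space covered by Theorem~\ref{th:main-2} (with $\gamma=\tau_+$), and together with statements (ii)--(iii) of Theorem~\ref{th:main-1} this yields a weak solution $u_n$ of \eqref{veryweaksolution} with $f$ replaced by $f_n$, belonging to $L^1(\xO;|x|^{-b})$ for every $b<2s-\tau_+$.

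For (b), fix $b<2s-\tau_+$. For an arbitrary weak solution $u$ of \eqref{veryweaksolution} with source $g\in L^1(\xO;d(x)^s|x|^{\tau_+})$, I would test with $\psi=\psi_u\in\mathbf{X}_\xm(\xO;|x|^{-b})$, the solution of $(-\xD)^s_{\tau_+}\psi=|x|^{-b}\sign(u)$ in $\xO$, $\psi=0$ in $\R^N\setminus\xO$: the datum $|x|^{-b}\sign(u)$ is admissible precisely because $b<2s-\tau_+$, and the well-posedness and regularity of the dual problem supply such a $\psi_u$, together with the bound $|\psi_u|\le C d(x)^s$ as in \eqref{estLinfty3}, $C$ independent of $u$. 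Then, using the weak formulation and $(-\xD)^s_{\tau_+}\psi_u=|x|^{-b}\sign(u)$ a.e.\ in $\xO$,
\[
\int_\xO |u|\,|x|^{-b}\,dx=\int_\xO u\,(-\xD)^s_{\tau_+}\psi_u\,dx=\int_\xO g\,\psi_u\,|x|^{\tau_+}\,dx\le C\int_\xO |g|\,d(x)^s|x|^{\tau_+}\,dx,
\]
which is \eqref{est:apriori-1}; the choice $g\equiv 0$ gives uniqueness. For (c), by linearity $u_n-u_m$ solves \eqref{veryweaksolution} with source $f_n-f_m$, so the estimate just proved gives $\|u_n-u_m\|_{L^1(\xO;|x|^{-b})}\le C\|f_n-f_m\|_{L^1(\xO;d(x)^s|x|^{\tau_+})}\to 0$ for every $b<2s-\tau_+$; hence $u_n$ converges in each $L^1(\xO;|x|^{-b})$ to a limit $u=u_f$ obeying \eqref{est:apriori-1}. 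Passing to the limit in the weak formulation is then immediate, since $(-\xD)^s_{\tau_+}\psi\in L^\infty(\xO;|x|^{b})$ pairs continuously with $L^1(\xO;|x|^{-b})$, while $|\psi|\le C d(x)^s$ and $f_n\to f$ in $L^1(\xO;d(x)^s|x|^{\tau_+})$ control the right-hand side.

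For (d), \eqref{kato1} of Theorem~\ref{th:main-2}, transferred to the weak form as above, gives for $0\le\psi\in\mathbf{X}_\xm(\xO;|x|^{-b})$ the inequality $\int_\xO u_n^+(-\xD)^s_{\tau_+}\psi\,dx\le\int_\xO f_n\,\sign^+(u_n)\,\psi\,|x|^{\tau_+}\,dx$. Since $|a^+-c^+|\le|a-c|$, we have $u_n^+\to u^+$ in $L^1(\xO;|x|^{-b})$ and the left-hand side converges; for the right-hand side, passing to a subsequence along which $u_n\to u$ a.e., the integrand is dominated by $|f|\,|\psi|\,|x|^{\tau_+}\in L^1(\xO)$ and converges a.e.\ to $f\,\sign^+(u)\,\psi\,|x|^{\tau_+}$ away from $\{u=0\}$, the behaviour on $\{u=0\}$ being the one delicate point. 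This yields \eqref{Kato:+-1}; applying \eqref{Kato:+-1} both to $u$ and to $-u$ (a weak solution with source $-f$) and adding gives \eqref{Kato||-1}. Monotonicity of $f\mapsto u$ then follows from linearity together with positivity, and positivity is obtained by applying \eqref{Kato:+-1} to the weak solution with source $-f\le0$: this forces $\int_\xO u^-(-\xD)^s_{\tau_+}\psi\,dx\le0$ for every $0\le\psi\in\mathbf{X}_\xm(\xO;|x|^{-b})$, and taking $\psi$ to solve $(-\xD)^s_{\tau_+}\psi=|x|^{-b}$ in $\xO$, $\psi=0$ outside (so $\psi\ge0$ by the maximum principle for $(-\xD)^s_{\tau_+}$, and $\psi\in\mathbf{X}_\xm(\xO;|x|^{-b})$) gives $\int_\xO u^-|x|^{-b}\,dx\le0$, hence $u^-=0$ a.e.\ in $\xO$.

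The crux of the argument is the input used in steps (b) and (d): the well-posedness and sharp estimates for the dual Dirichlet problem for $(-\xD)^s_{\tau_+}$ with a right-hand side only bounded against the weight $|x|^{b}$, notably the boundary decay $|\psi|\le C d(x)^s$ and the interior and near-origin regularity guaranteeing membership in $\mathbf{X}_\xm(\xO;|x|^{-b})$ --- this is exactly where the interplay between the nonlocality of $(-\xD)^s$ and the singular weight $|x|^{\tau_+}$ at the origin has to be resolved, and where \eqref{test sp}, \eqref{estLinfty3} and the preliminary lemmas enter. A secondary, purely measure-theoretic difficulty is the passage to the limit in the Kato inequality: $\sign^+$ is discontinuous and the approximation $f_n$ is not monotone, so the convergence of $f_n\,\sign^+(u_n)$ on the set $\{u=0\}$ must be handled with care.
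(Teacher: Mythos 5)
Your overall architecture — approximate $f$ by nicer data, establish a linear a priori estimate by testing against a suitable dual solution, pass to the limit, inherit the Kato inequality from Theorem~\ref{th:main-2}, and deduce monotonicity/positivity — is the same as the paper's. However, the crucial step (b) as you have written it does not go through, and this is precisely the difficulty the paper's proof is organized to avoid.

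You propose to test an arbitrary weak solution $u$ against $\psi_u$ solving $(-\xD)^s_{\tau_+}\psi=|x|^{-b}\sign(u)$ and claim $\psi_u\in\mathbf{X}_\xm(\xO;|x|^{-b})$. But membership in $\mathbf{X}_\xm$ is not just a matter of an $L^\infty(\xO;|x|^b)$ right-hand side: property (ii) of Definition~\ref{def:weaksol} demands that $(-\xD)^s_{\tau_+}\psi$ exist \emph{pointwise a.e.}\ and be bounded against $|x|^{-b}$, and property (iii) demands uniform domination of the truncated integrals. The only tool the paper supplies for verifying these is Corollary~\ref{rem 3.1-2}, which rests on Lemma~\ref{smoothlemma2} and therefore requires the datum to lie in $C^\theta_{\loc}(\xO\setminus\{0\})$, giving $\psi\in C^{2s+\beta_0}_{\loc}(\xO\setminus\{0\})$ and hence a well-defined pointwise $(-\xD)^s_{\tau_+}\psi$. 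The datum $|x|^{-b}\sign(u)$ is generically discontinuous, so this machinery does not apply; with only $L^\infty$ data, Lemma~\ref{smoothlemma}(i) yields $\psi\in C^\beta_{\loc}$ for $\beta<2s$, which is \emph{not} enough to define $(-\xD)^s_{\tau_+}\psi$ pointwise, and so $\psi_u\notin\mathbf{X}_\xm(\xO;|x|^{-b})$ is a real possibility. The paper dodges this by never testing the $L^1$ solution directly against a $\sign$-type dual object: it works at the level of the variational solutions $v_n$ (which are $C^\beta_{\loc}$ by Lemma~\ref{smoothlemma}(i)), replaces $\sign$ by smooth truncations $h_m$, observes that $h_m(v_n)$ \emph{is} H\"older away from the origin, hence the dual solution $\tilde U_m$ with datum $|x|^{-b}h_m(v_n)$ lies in $\mathbf{X}_\xm$ by Corollary~\ref{rem 3.1-2}, tests, and then lets $m\to\infty$ by dominated convergence to recover the weighted $L^1$ bound \eqref{4.14}. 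Your argument would be repaired by inserting this regularization layer: first bound $\|v_n\|_{L^1(\xO;|x|^{\tau_+-b})}$ via $h_m(v_n)$, then pass to the limit in $m$.

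A secondary point, which you flag but do not resolve, is the convergence of $f_n\,\sign^+(u_n)$ on $\{u=0\}$ in step (d). The paper handles this by perturbing the argument of $\sign^+$: it works with $(v_n-\xe\phi_l)^+$ for a smooth exhaustion $\{\xO_l\}$ and cutoffs $\phi_l$, applies Theorem~\ref{th:main-2}, and then sends $\xe=\xe_m\downarrow 0$ along a sequence chosen so that $|\{v=\xe_m\}|=0$, which makes $\sign^+(v_n-\xe_m)\to\sign^+(v-\xe_m)$ a.e.\ and sidesteps the null-set ambiguity entirely. You should adopt this (or an equivalent) device rather than leaving it as a loose end, since it is exactly the place where the naive dominated-convergence argument fails.

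Finally, a small remark on step (a): truncating $f_n=\max\{-n,\min\{f,n\}\}$ is fine for producing $L^2(\xO;|x|^a)$ data for Proposition~\ref{existence}; the paper's choice of $f_n\in C^\theta(\overline\xO)$ is not essential here, since the H\"older regularity needed later is that of the \emph{solutions} $v_n$, which Lemma~\ref{smoothlemma}(i) already provides for merely $L^\infty(\xO;|x|^b)\cap L^2(\xO)$ data. Your positivity argument via $\xi_b$ solving \eqref{xib} is correct and matches the paper (here the datum $|x|^{-b}$ \emph{is} H\"older away from the origin, so $\xi_b\in\mathbf{X}_\xm$ by Corollary~\ref{rem 3.1-2}).
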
	

The variants of Kato's inequality \eqref{Kato:+-1} and \eqref{Kato||-1} are a main thrust of the present paper. The idea of the proof is to reduce problem \eqref{veryweaksolution} to the associated problem \eqref{eq:1-1} and then to make use of Kato type inequality \eqref{kato1}. Nevertheless, the derivation of \eqref{Kato:+-1} and \eqref{Kato||-1} does not follows straightforward from estimate \eqref{kato1}, but contains intermediate steps regarding an approximation procedure and a perturbation process. \smallskip

The existence and uniqueness result still holds when the source is a measure, as pointed out in the next result.

 \begin{theorem}\label{inter}
 Assume that $\xO\subset\BBR^N$ is a bounded domain satisfying the exterior ball condition and containing the origin and $\xn\in\mathfrak{M}(\Omega\setminus\{0\};d(x)^s|x|^{\tau_+})$. Then problem \eqref{veryweaksolution-nu} admits a unique weak solution $u=u_\nu$. For any $b<2s-\tau_+$, there exists a positive constant $c=c(N,\Omega,s,\mu,b)$ such that
	\ba \label{l1ineqb}
	\| u \|_{L^1(\Omega;|x|^{-b})} \leq c\,\| \nu \|_{\mathfrak{M}(\Omega\setminus\{0\};d(x)^s|x|^{\tau_+})}.
	\ea
	Moreover, the mapping $\nu \mapsto u$ is nondecreasing. In particular, if $\nu \geq 0$ then $u \geq 0$ a.e. in $\Omega \setminus \{0\}$.
\end{theorem}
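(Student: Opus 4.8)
The plan is to realise $u_\nu$ as a limit of weak solutions with $L^1$ data, to which Theorem~\ref{existence2} applies, and then to pin down the limit by means of the uniqueness already contained in that theorem. By linearity it suffices to treat $\nu\ge 0$ (in general set $u_\nu:=u_{\nu^+}-u_{\nu^-}$). Fix $b<2s-\tau_+$ and choose $b'\in(b,2s-\tau_+)$. Put $\Omega_n:=\{x\in\Omega:\ d(x)>\tfrac1n,\ |x|>\tfrac1n\}$, let $\rho_\eps$ be a standard mollifier, and set $f_n:=\rho_{\eps_n}*(\nu\lfloor\Omega_n)$ with $\eps_n$ so small that $\supp f_n\Subset\Omega\setminus\{0\}$. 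Then $f_n\in C_c^\infty(\Omega\setminus\{0\})\subset L^1(\Omega;d(x)^s|x|^{\tau_+})$, $f_n\ge0$, $f_n\rightharpoonup\nu$ weakly-$*$, and $\|f_n\|_{\GTM(\Omega\setminus\{0\};d(x)^s|x|^{\tau_+})}\le C\|\nu\|_{\GTM(\Omega\setminus\{0\};d(x)^s|x|^{\tau_+})}$, with the weighted tails near $\{0\}$ and $\partial\Omega$ uniformly small in $n$ by construction. Let $u_n:=u_{f_n}$ be the weak solution furnished by Theorem~\ref{existence2}; by \eqref{est:apriori-1} applied with exponent $b'$ and by the monotonicity in Theorem~\ref{existence2}, $0\le u_n$ and $\|u_n\|_{L^1(\Omega;|x|^{-b'})}\le C\|\nu\|_{\GTM(\Omega\setminus\{0\};d(x)^s|x|^{\tau_+})}$, uniformly in $n$.

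The heart of the matter is to upgrade this bare weighted $L^1$ bound to compactness in $L^1_{\loc}(\Omega\setminus\{0\})$. For $\varphi\in C_c^\infty(\Omega\setminus\{0\})$ let $\psi_\varphi\in\mathbf{X}_\mu(\Omega;|x|^{-b})$ solve the dual problem $(-\Delta)^s_{\tau_+}\psi_\varphi=\varphi$ in $\Omega$, $\psi_\varphi=0$ in $\R^N\setminus\Omega$ (its solvability inside $\mathbf{X}_\mu$, together with the bound $|\psi_\varphi|\le Cd(x)^s$ from \eqref{estLinfty3} and the regularity recorded in \eqref{test sp}, is precisely the input that makes $\mathbf{X}_\mu$ an admissible test space and that yields uniqueness in Theorem~\ref{existence2}). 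Testing the equation for $u_n$ against $\psi_\varphi$ gives $\bigl|\int_\Omega u_n\varphi\,dx\bigr|=\bigl|\int_\Omega\psi_\varphi\,|x|^{\tau_+}f_n\,dx\bigr|\le\Lambda_\varphi\,\|f_n\|_{\GTM(\Omega\setminus\{0\};d(x)^s|x|^{\tau_+})}$, where $\Lambda_\varphi:=\sup_{x\in\Omega\setminus\{0\}}|\psi_\varphi(x)|\,d(x)^{-s}<\infty$. Hence $\{u_n\}$ is bounded in the dual of a suitable weighted test space, and feeding in the fractional regularity of $\psi_\varphi$ in terms of $\varphi$ promotes this to a uniform bound for $\{u_n\}$ in $W^{\sigma,q}_{\loc}(\Omega\setminus\{0\})$ for some $\sigma\in(0,2s)$, $q>1$. (If the Green kernel of $(-\Delta)^s_{\tau_+}$ in $\Omega$ is available, the same conclusion — a uniform weighted Marcinkiewicz bound — is read off directly from $u_n=\mathbb{G}_{\tau_+}[\,|x|^{\tau_+}f_n\,]$.) By Rellich's theorem and a diagonal extraction over a compact exhaustion of $\Omega\setminus\{0\}$, along a subsequence $u_n\to u$ in $L^1_{\loc}(\Omega\setminus\{0\})$ and a.e.; by Fatou and the uniform bound, $u\in L^1(\Omega;|x|^{-b})$ for every $b<2s-\tau_+$, with \eqref{l1ineqb}, and $u\ge0$ a.e.\ in $\Omega\setminus\{0\}$.

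It remains to pass to the limit in $\int_\Omega u_n(-\Delta)^s_{\tau_+}\psi\,dx=\int_\Omega\psi\,|x|^{\tau_+}f_n\,dx$ for fixed $\psi\in\mathbf{X}_\mu(\Omega;|x|^{-b})$. On any $K\Subset\Omega\setminus\{0\}$ the integrand $u_n(-\Delta)^s_{\tau_+}\psi$ converges in $L^1(K)$, since $(-\Delta)^s_{\tau_+}\psi$ is bounded there; the contribution of $\Omega\setminus K$ is controlled uniformly in $n$ by equi-integrability near $0$ — because $\int_{\{|x|<\delta\}}u_n|(-\Delta)^s_{\tau_+}\psi|\,dx\le C\delta^{b'-b}\|u_n\|_{L^1(\Omega;|x|^{-b'})}$ — and near $\partial\Omega$, where one combines $|\psi|\le Cd(x)^s$ with the boundary-layer estimate for $u_n$ underlying the proof of Theorem~\ref{existence2}. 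Hence the left-hand side tends to $\int_\Omega u(-\Delta)^s_{\tau_+}\psi\,dx$. On the right, $\psi|x|^{\tau_+}$ is continuous on $\Omega\setminus\{0\}$ with $|\psi|x|^{\tau_+}|\le Cd(x)^s|x|^{\tau_+}$, so $f_n\rightharpoonup\nu$ together with the uniform mass bound and the tightness of $\{f_n\}$ give $\int_\Omega\psi\,|x|^{\tau_+}f_n\,dx\to\int_{\Omega\setminus\{0\}}\psi\,|x|^{\tau_+}\,d\nu$. Thus $u=:u_\nu$ is a weak solution. Uniqueness follows as in Theorem~\ref{existence2}: if $w$ is a weak solution with datum $0$, then $\int_\Omega w\,\varphi\,dx=\int_\Omega w(-\Delta)^s_{\tau_+}\psi_\varphi\,dx=0$ for all $\varphi\in C_c^\infty(\Omega\setminus\{0\})$, hence $w=0$ a.e. Finally, for monotonicity one chooses the approximants with $f_n^1\le f_n^2$ whenever $\nu_1\le\nu_2$, so $u_{f_n^1}\le u_{f_n^2}$ by Theorem~\ref{existence2} and the inequality passes to the limit; taking $\nu_1=0$ gives $u_\nu\ge0$ when $\nu\ge0$. \emph{The main obstacle is the compactness step} — passing from the weighted $L^1$ a priori bound to a bound in a space compactly embedded in $L^1_{\loc}(\Omega\setminus\{0\})$ — together with the bookkeeping needed to justify the limit uniformly near both the singular point $0$ and the boundary.
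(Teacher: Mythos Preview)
Your architecture---approximate $\nu$ by smooth compactly supported data, invoke Theorem~\ref{existence2}, extract a limit, pass to the limit in the weak formulation---matches the paper's. The gap is exactly where you flag it: the compactness step is not carried out, and the two mechanisms you invoke to fill it are not available. First, the duality argument (``feeding in the fractional regularity of $\psi_\varphi$ in terms of $\varphi$ promotes this to a uniform bound in $W^{\sigma,q}_{\loc}$'') is not substantiated; to make it work one would need quantitative mapping properties of $\varphi\mapsto\psi_\varphi$ in H\"older or Sobolev scales, which are not established for $(-\Delta)^s_{\tau_+}$ on $\Omega$. Your fallback (``if the Green kernel of $(-\Delta)^s_{\tau_+}$ is available\ldots'') is precisely what the paper states is \emph{not} known and is the reason the whole dual-operator framework is introduced. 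Second, the ``boundary-layer estimate for $u_n$ underlying the proof of Theorem~\ref{existence2}'' does not exist: that proof gives only the weighted $L^1$ bound \eqref{est:apriori-1}, which carries no decay information near $\partial\Omega$.

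The paper fills both holes with two concrete ingredients. For local compactness, it further mollifies $u_\delta$ to $u_{\delta,\varepsilon}$, observes that on any ball $B\Subset\Omega\setminus\{0\}$ this solves the \emph{standard} fractional equation $(-\Delta)^s u_{\delta,\varepsilon}=h_\varepsilon$ with $h_\varepsilon=\zeta_\varepsilon*(\nu_\delta-\mu|x|^{-2s}u_\delta)$, and then splits $u_{\delta,\varepsilon}$ into an $s$-harmonic part (controlled pointwise via the explicit Poisson kernel for balls) and a remainder with $L^1$ data (controlled in $W^{2s-\gamma,p}$ by \cite[Proposition~2.5]{CV}). This yields uniform $W^{2s-\gamma,p}_{\loc}(\Omega\setminus\{0\})$ bounds and hence a.e.\ convergence along a subsequence. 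For equi-integrability all the way up to $\partial\Omega$, the paper proves a separate Marcinkiewicz-type estimate (Lemma~\ref{est2}): the truncation $v_\lambda=\max\{-\lambda,\min\{v,\lambda\}\}$ tested in the variational formulation, combined with the Sobolev inequality \eqref{subcritsobolev2}, gives a uniform $L^{2_s^*}_w(\Omega\setminus B_r(0))$ bound, hence uniform $L^q(\Omega\setminus B_r(0))$ bounds for every $q<2_s^*$. Together with tightness at the origin (your $|x|^{-b'}$-vs-$|x|^{-b}$ trick, which the paper also uses) this upgrades a.e.\ convergence to convergence in $L^1(\Omega;|x|^{-b})$, and the limit passage is then immediate since $|(-\Delta)^s_{\tau_+}\psi|\le C|x|^{-b}$. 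Your plan becomes a proof once these two pieces are supplied.
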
 

We note that the exterior ball condition can be relaxed if the weight $d(x)$ is not involved in the space of measure source. More precisely, if $\xO$ is an open bounded domain containing the origin, for any $\xn\in\mathfrak{M}(\Omega\setminus\{0\};|x|^{\tau_+})$, then Theorem \ref{existence2} and Theorem \ref{inter} still hold true.

When the source is a bounded measure in $\GTM(\Omega;d(x)^s |x|^{\tau_+})$,  it can be decomposed as the sum $\nu + \ell \delta_0$ where $\nu \in \GTM(\Omega\setminus \{0\};d(x)^s |x|^{\tau_+})$, $\ell \in \R$ and $\delta_0$ denotes the Dirac measure concentrated at the origin. 

Recall that the functions $\Phi_{s,\mu}$ and $\Gamma_{s,\mu}$, defined in \eqref{fu}, are solution of $\CL_\mu^s u=0$ in $\BBR^N\setminus\{0\}$.
By \cite[Theorem 4.14]{CW}, there exist a positive constant $c=c(N,s,\xm,\Omega)$ and a nonnegative function $\xF_{s,\xm}^\xO\in W^{s,2}_{\loc}(\BBR^N\setminus\{0\})$ such that $\xF_{s,\xm}^\xO=0$  in $\BBR^N\setminus\xO,$
\ba \label{PhiOmega}
\lim_{|x|\to 0}\frac{\xF_{s,\xm}^\xO(x)}{\xF_{s,\xm}(x)}=1\quad \text{and}\quad\xF_{s,\xm}^\xO(x)\leq c |x|^\tm\;\;\forall x\in \xO\setminus\{0\}.
\ea
Moreover
\bal
\ll \Phi_{s,\mu}^\Omega,\phi \gg_{\mu} = 0,\quad\forall \xf\in C_0^\infty(\xO\setminus\{0\})
\eal
and
\bal
\int_\xO \xF_{s,\xm}^\xO(-\xD)^s_{\tau_+}\psi dx=c_{s,\xm}\,\psi(0),\quad\forall\psi\in C^{1,1}_0(\xO),
\eal
where $c_{s,\xm}$ is a constant given in \cite[(1.15)]{CW}.

We note that  in \cite[Theorem 4.14]{CW} the fundamental solution   $\xF_{s,\xm}^\xO$ is constructed under the assumption that $\xO$ is $C^2$.  In fact, the $C^2$ smoothness of $\Omega$ can be relaxed and the assumption that  $\xO$ satisfies the exterior ball condition is sufficient for the existence of $\Phi_{s,\mu}^{\Omega}$ due to the
method of the super and sub solutions.

\begin{definition} Assume that   $\Omega$ is a bounded domain satisfying the exterior ball condition and containing the origin, $\xn\in\mathfrak{M}(\Omega\setminus\{0\};\, d(x)^s|x|^{\tau_+})$ and $\ell \in \R$.  We will say that $u$ is a \textit{weak solution} to
	\ba\label{veryweaksolutionmesure} \left\{ \BAL
	\CL_\xm^s u&=\xn + \ell \delta_0&&\quad\text{in}\;\xO\\
	u&=0&&\quad\text{in}\;\BBR^N\setminus\xO,
	\EAL \right.
	\ea
if  for any $b<2s-\tau_+$, $u\in L^1(\xO;|x|^{-b})$ and
 \ba \label{weakform-a1}
	\int_\xO u(-\xD)^s_{\tau_+}\psi dx=\int_{\xO\setminus\{0\}}\psi |x|^{\tau_+} d\xn + \ell \int_{\Omega} \Phi_{s,\mu}^{\Omega} (-\Delta)_{\tau_+}^s \psi dx, \ \ \forall\psi\in \mathbf{X}_\xm(\xO;|x|^{-b}).
	 \ea
\end{definition}

The following result states the solvability of problem \eqref{veryweaksolutionmesure}.

\begin{theorem}\label{existence3-dirac}
	 Assume that   $\Omega$ is a bounded domain satisfying the exterior ball condition and containing the origin, $\xn\in \mathfrak{M}(\Omega\setminus\{0\};d(x)^s|x|^{\tau_+})$ and $\ell \in \R$. Then problem \eqref{veryweaksolutionmesure} admits a unique weak solution $u = u_{\nu,\ell}$.  For any $b<2s-\tau_+$, there exists a positive constant $c=c(N,\Omega,s,\mu,b)$ such that
	\bal
	\| u \|_{L^1(\Omega;|x|^{-b})} \leq c(\| \nu \|_{\mathfrak{M}(\Omega\setminus\{0\};d(x)^s|x|^{\tau_+})}+\ell).
	\eal
	Moreover, the mapping $(\nu,\ell) \mapsto u$ is nondecreasing. In particular, if $\nu \geq 0$ and $\ell \geq 0$ then $u \geq 0$ a.e. in $\Omega \setminus \{0\}$.
\end{theorem}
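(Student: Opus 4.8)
The plan is to exploit linearity, reducing the Dirac part to the already-constructed fundamental solution $\Phi_{s,\mu}^{\Omega}$ and the part supported away from the origin to Theorem \ref{inter}. First I would set $u := u_\nu + \ell\, \Phi_{s,\mu}^{\Omega}$, where $u_\nu$ is the unique weak solution to \eqref{veryweaksolution-nu} furnished by Theorem \ref{inter}, and verify that this $u$ satisfies \eqref{weakform-a1}. For any $b < 2s - \tau_+$ and any $\psi \in \mathbf{X}_\mu(\Omega;|x|^{-b})$, the integrability $u_\nu \in L^1(\Omega;|x|^{-b})$ comes from \eqref{l1ineqb}, while $\Phi_{s,\mu}^{\Omega} \in L^1(\Omega;|x|^{-b})$ follows from the pointwise bound $\Phi_{s,\mu}^{\Omega}(x) \le c|x|^{\tau_-}$ in \eqref{PhiOmega} together with $\tau_- + \tau_+ = 2s - N$, which gives $|x|^{\tau_- - b} = |x|^{-N + (2s - \tau_+ - b)}$ locally integrable near $0$ precisely because $b < 2s - \tau_+$. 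Adding the defining identity for $u_\nu$ against $\psi$ to $\ell$ times the identity
\bal
\int_\Omega \Phi_{s,\mu}^{\Omega} (-\Delta)_{\tau_+}^s \psi\, dx = c_{s,\mu}\,\psi(0),
\eal
extended from $C^{1,1}_0(\Omega)$ to all of $\mathbf{X}_\mu(\Omega;|x|^{-b})$ (this extension is itself a point to justify, see below), yields exactly \eqref{weakform-a1}. The a priori bound is then immediate from \eqref{l1ineqb} and the $L^1(\Omega;|x|^{-b})$ bound on $\Phi_{s,\mu}^{\Omega}$, and monotonicity in $(\nu,\ell)$ follows from the monotonicity of $\nu \mapsto u_\nu$ in Theorem \ref{inter} and the nonnegativity of $\Phi_{s,\mu}^{\Omega}$; positivity when $\nu \ge 0$, $\ell \ge 0$ is then automatic.

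For uniqueness, I would argue that the difference $w$ of two weak solutions satisfies
\bal
\int_\Omega w\, (-\Delta)_{\tau_+}^s \psi\, dx = 0, \qquad \forall\, \psi \in \mathbf{X}_\mu(\Omega;|x|^{-b}),
\ea
together with $w \in L^1(\Omega;|x|^{-b})$. This is precisely the uniqueness statement already embedded in Theorem \ref{inter} applied to $\nu \equiv 0$ (the solvability proof there must produce, for a suitable dense family of right-hand sides $g$, test functions $\psi$ with $(-\Delta)_{\tau_+}^s\psi = g$, whence $\int_\Omega w\,g\,|x|^{\tau_+}\,dx = 0$ forces $w = 0$ a.e.); so I would invoke that and not reprove it. Concretely, one picks $g \in C_c^\infty(\Omega\setminus\{0\})$, solves $(-\Delta)_{\tau_+}^s\psi = g$ via Theorem \ref{th:main-2} to get $\psi \in \mathbf{X}_\mu(\Omega;|x|^{-b})$ (using the regularity inclusions \eqref{test sp} to check membership in the test space), and concludes $w = 0$.

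The main obstacle is the extension of the identity $\int_\Omega \Phi_{s,\mu}^{\Omega}(-\Delta)_{\tau_+}^s\psi\,dx = c_{s,\mu}\psi(0)$ from $\psi \in C^{1,1}_0(\Omega)$ to arbitrary $\psi \in \mathbf{X}_\mu(\Omega;|x|^{-b})$, since the latter class allows $\psi$ to be merely $C^s$ up to $\partial\Omega$ and only $C_{\mathrm{loc}}^{2s+\alpha}$ away from $0$, so one cannot simply quote a smooth test-function result. I would handle this by an approximation argument: given $\psi \in \mathbf{X}_\mu(\Omega;|x|^{-b})$, the bounds \eqref{estLinfty3} and the uniform domination hypothesis (iii) in Definition \ref{def:weaksol} let one split the integral into a region $\{|x| \le \rho\}$, where $\Phi_{s,\mu}^{\Omega} \sim \Phi_{s,\mu}$ and a direct computation using the asymptotics of $\Phi_{s,\mu}$ extracts the $c_{s,\mu}\psi(0)$ term as $\rho \to 0$, and a region $\{|x| > \rho\}$, where a Fubini-type symmetrization of the double integral defining $(-\Delta)_{\tau_+}^s$ transfers the operator onto $\Phi_{s,\mu}^{\Omega}$ (which solves $\mathcal{L}_\mu^s = 0$ away from $0$ in the appropriate weak sense) and the contribution vanishes. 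In all likelihood this identity, or the needed density of $C^{1,1}_0(\Omega)$ in $\mathbf{X}_\mu(\Omega;|x|^{-b})$ in a topology making both sides continuous, is already established in the course of proving Theorems \ref{existence2}--\ref{inter}; if so, the present proof is a short linear superposition argument and I would simply cite it.
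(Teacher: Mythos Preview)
Your approach by linear superposition $u = u_\nu + \ell\,\Phi_{s,\mu}^{\Omega}$ is exactly what the paper does, and your treatment of integrability, the a priori bound, monotonicity, and uniqueness all match the paper's reasoning. However, you have misread the weak formulation \eqref{weakform-a1}: the right-hand side is
\[
\int_{\Omega\setminus\{0\}}\psi\,|x|^{\tau_+}\,d\nu \;+\; \ell\int_{\Omega}\Phi_{s,\mu}^{\Omega}\,(-\Delta)_{\tau_+}^s\psi\,dx,
\]
\emph{not} $\int_{\Omega\setminus\{0\}}\psi\,|x|^{\tau_+}\,d\nu + \ell\,c_{s,\mu}\,\psi(0)$. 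The Dirac contribution is encoded in the definition precisely through the integral $\int_{\Omega}\Phi_{s,\mu}^{\Omega}(-\Delta)_{\tau_+}^s\psi\,dx$, so that with $u = u_\nu + \ell\,\Phi_{s,\mu}^{\Omega}$ the identity \eqref{weakform-a1} becomes
\[
\int_\Omega u_\nu\,(-\Delta)_{\tau_+}^s\psi\,dx + \ell\int_\Omega \Phi_{s,\mu}^{\Omega}(-\Delta)_{\tau_+}^s\psi\,dx
= \int_{\Omega\setminus\{0\}}\psi\,|x|^{\tau_+}\,d\nu + \ell\int_\Omega \Phi_{s,\mu}^{\Omega}(-\Delta)_{\tau_+}^s\psi\,dx,
\]
which reduces tautologically to the defining identity for $u_\nu$. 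The only thing to check is that the integral $\int_\Omega \Phi_{s,\mu}^{\Omega}(-\Delta)_{\tau_+}^s\psi\,dx$ is finite, and this is exactly your observation that $\tau_- - b > -N$. Consequently, the ``main obstacle'' you identify --- extending the identity $\int_\Omega \Phi_{s,\mu}^{\Omega}(-\Delta)_{\tau_+}^s\psi\,dx = c_{s,\mu}\psi(0)$ from $C^{1,1}_0(\Omega)$ to all of $\mathbf{X}_\mu(\Omega;|x|^{-b})$ --- is simply not needed, and the paper's proof is correspondingly a few lines long.
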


 In a forthcoming article we will develop our observations to study qualitative properties of  the semilinear problem
\bal
 \left\{\BAL
\CL_\xm^s u+g(u)&= \nu  &&\quad\text{in}\;\xO\\
u&=0&&\quad\text{in}\;\BBR^N\setminus\xO,
\EAL \right. 
\eal
where  $\nu$ is a Radon measure defined in an appropriate framework and $g:\BBR\to\BBR$ is a nondecreasing continuous function such that $g(0)=0$. \medskip

\noindent \textbf{Organization of the paper.} The rest of this paper is organized as follows. In Section \ref{sec:funcset},  we prove density properties of $\Hsg$ and in $\mathbf{H}_{\mu,0}^s(\Omega)$ and  the relation between these spaces (Theorem \ref{th:main-1}). We also establish the related continuous and compact embeddings. Section \ref{sec:dual} is devoted to the study of variational solutions of
Poisson problems involving the dual operators (Theorem \ref{th:main-2}). In Section \ref{sec:nonhomogeneous}, we address the solvability for Poisson equations in $\Hsg$ and show local regularity results. The Poisson problem is treated in Section \ref{sec:Poisson}. In particular, we prove Theorem \ref{existence2} in Subsection \ref{subsec:L1source} and demonstrate Theorem \ref{inter} and Theorem \ref{existence3-dirac} in Subsection \ref{subsec:measuresource}.  \medskip

\noindent \textbf{Notation.} Throughout this paper, unless otherwise specified, we assume that $\Omega \subset \R^N$ ($N \geq 2)$ is a bounded domain containing the origin and $d(x)$ is the distance from $x \in \Omega$ to $\R^N \backslash \Omega$. We denote by $c, C, c_1, c_2, \ldots$ positive constants that may vary from one appearance to another and depend only on the data. The notation $c = c(a,b,\ldots)$ indicates the dependence of the constant $c$ on $a,b,\ldots$The constant $C_{N,s}$ is given by \eqref{CNs}. For a function $u$, we denote that $u^+=\max\{u,0\}$ and $u^- = \max\{-u,0\}$. For a set $A \subset \R^N$, the function $\1_A$ denotes the indicator function of $A$. \medskip

\noindent \textbf{Acknowledgements: }  {\small

H. Chen is supported by NSFC,  No. 12071189, 12361043,  by Jiangxi Province Science Fund No. 20232ACB201001.

K. T. Gkikas  is supported by the Hellenic Foundation for Research and Innovation (H.F.R.I.) under the “2nd Call for H.F.R.I. Research Projects to support Post-Doctoral Researchers” (Project
Number: 59). 

P.-T. Nguyen is supported by Czech Science Foundation, Project GA22-17403S.}\medskip

\section{Function setting} \label{sec:funcset}
In this section, we provide important properties of function spaces that we work on.
\subsection{Space $H_0^s(\Omega;|x|^\gamma)$}
We start with some estimates which are derived from Hardy inequalities.

For each $\xg\in[ \frac{2s-N}{2}, 2s)$, by \eqref{tptm}, there exists $\mu \geq \mu_0$ such that $\gamma=\tp$. For  $u \in C_0^\infty(\Omega)$,  put $v=|x|^{-\gamma}u \in C^\infty(\Omega \setminus \{0\})$, it can be checked that
\ba\label{normHs}
\frac{C_{N,s}}{2}\int_{\BBR^N}\int_{\BBR^N}\frac{|u(x)-u(y)|^2}{|x-y|^{N+2s}}dydx+\xm \int_{\BBR^N}\frac{u(x)^2}{|x|^{2s}}dx = \frac{C_{N,s}}{2}\int_{\BBR^N}\int_{\BBR^N}\frac{|v(x)-v(y)|^2}{|x-y|^{N+2s}}|y|^{\gamma}dy |x|^{\gamma}dx.
\ea

If $\mu>\mu_0$ then $\gamma>\frac{2s-N}{2}$. We infer from \eqref{mu_00} that
	\bal
	\frac{C_{N,s}}{2}\int_{\BBR^N}\int_{\BBR^N}\frac{|u(x)-u(y)|^2}{|x-y|^{N+2s}}dydx+\xm \int_{\BBR^N}\frac{u(x)^2}{|x|^{2s}}dx\geq (\mu-\mu_0) \int_{\BBR^N}\frac{u(x)^2}{|x|^{2s}}dx,
	\eal
which implies (note that $v$ has compact support in $\Omega$)

	\ba\label{subcrit1}\BAL
	\frac{C_{N,s}}{2}\int_{\BBR^N}\int_{\BBR^N}\frac{|v(x)-v(y)|^2}{|x-y|^{N+2s}}|y|^{\gamma}dy |x|^{\gamma}dx
	&\geq(\xm-\xm_0) \int_{\BBR^N}v(x)^2|x|^{2\gamma-2s}dx \\
	&\geq C(\Omega,s)(\xm-\xm_0) \int_{\BBR^N}v(x)^2|x|^{2\gamma}dx.
	\EAL
	\ea

If $\mu=\mu_0$ then $\gamma=\frac{2s-N}{2}$. Put $\displaystyle D_{\Omega}:=\sup_{x \in \Omega}|x|$ and denote
	\ba \label{X}
	X(t):=\left(1-\ln\frac{t}{D_{\Omega}}\right)^{-1} \quad \text{for } t>0.
	\ea
By \cite[Theorem 5]{tz},
	\ba \label{est:Hardy1}
	\frac{C_{N,s}}{2}\int_{\BBR^N}\int_{\BBR^N}\frac{|u(x)-u(y)|^2}{|x-y|^{N+2s}}dydx+\xm_0 \int_{\BBR^N}\frac{u(x)^2}{|x|^{2s}}dx\geq C(N,s) \int_{\BBR^N}\frac{u(x)^2X(|x|)^2}{|x|^{2s}}dx,
	\ea
which implies
	\ba\label{crit1}\BAL
	\frac{C_{N,s}}{2}\int_{\BBR^N}\int_{\BBR^N}\frac{|v(x)-v(y)|^2}{|x-y|^{N+2s}}|y|^{\frac{2s-N}{2}}dy |x|^{\frac{2s-N}{2}}dx
	&\geq C(N,s) \int_{\BBR^N}v(x)^2 X(|x|)^2|x|^{-N}dx \\
	&\geq C(N,\Omega,s) \int_{\BBR^N}v(x)^2 |x|^{2s-N}dx.
	\EAL
	\ea
From \eqref{subcrit1} and \eqref{crit1}, we may define the space $\Hsgz$ as the closure of the space of functions in $C^\infty(\R^N)$ with compact support in  $\Omega \setminus\{0\}$  under the norm
\ba \label{normsgamma}
\norm{u}_{s,\gamma}:=\left(\int_{\R^N}\int_{\R^N}\frac{|u(x)-u(y)|^2}{|x-y|^{N+2s}}|y|^\xg dy |x|^\xg dx \right)^{\frac{1}{2} }.
\ea
 Consequently,  $\Hsgz$  is a Hilbert space with inner product
\ba \label{innersgamma} \langle u,v\rangle_{s,\gamma}:=\int_{\R^N}\int_{\R^N}\frac{\big(u(x)-u(y)\big)\big(v(x)-v(y)\big)}{|x-y|^{N+2s}}|y|^\xg dy |x|^\xg dx. \ea
In the sequel, we will use the norm \eqref{normsgamma} and the inner product \eqref{innersgamma} in  $\Hsgz$.

\begin{proposition}\label{density}  Assume that  $\xg\in [\frac{2s-N}{2}, 2s)$. Then $C_0^\infty(\Omega\setminus\{0\})$ is dense in  $H^{s}_0(\xO;|x|^\xg)$. Consequently, $\Hsgz=H^{s}_0(\xO;|x|^\xg)$. 
\end{proposition}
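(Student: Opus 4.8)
It suffices to prove that $C_0^\infty(\Omega\setminus\{0\})$ is dense in $\Hsg$; the identity $\Hsgz=\Hsg$ is then immediate from $C_0^\infty(\Omega\setminus\{0\})\subset C_0^\infty(\Omega)$ by taking closures in $\norm{\cdot}_{s,\gamma}$. Since $\Hsg$ is by definition the closure of $C_0^\infty(\Omega)$, it is enough to fix $u\in C_0^\infty(\Omega)$ and approximate it in $\norm{\cdot}_{s,\gamma}$ by functions of $C_0^\infty(\Omega\setminus\{0\})$. Fix $R>0$ with $\overline{B_{2R}}\subset\Omega$. The plan is to excise a shrinking neighborhood of the origin by a well-chosen radial cut-off $\zeta_\rho$ ($0<\rho<R$) with $0\le\zeta_\rho\le1$, $\zeta_\rho\equiv1$ on $B_\rho$, $\zeta_\rho\equiv0$ outside $B_R$, taken smooth, and to set $u_\rho:=(1-\zeta_\rho)u$, so that $u_\rho\in C_0^\infty(\Omega\setminus\{0\})$.

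First I would reduce the problem to a single estimate on $\zeta_\rho$. Using the splitting $(\zeta_\rho u)(x)-(\zeta_\rho u)(y)=u(y)\big(\zeta_\rho(x)-\zeta_\rho(y)\big)+\zeta_\rho(x)\big(u(x)-u(y)\big)$ one gets
\[
\norm{u-u_\rho}_{s,\gamma}^2=\norm{\zeta_\rho u}_{s,\gamma}^2\le 2\norm{u}_{L^\infty}^2\,\norm{\zeta_\rho}_{s,\gamma}^2+2\int_{\R^N}\int_{\R^N}\frac{\zeta_\rho(x)^2|u(x)-u(y)|^2}{|x-y|^{N+2s}}|y|^\gamma|x|^\gamma\,dy\,dx.
\]
The last integral tends to $0$ as $\rho\to0^+$ by dominated convergence: its integrand is bounded by $\frac{|u(x)-u(y)|^2}{|x-y|^{N+2s}}|y|^\gamma|x|^\gamma$, which lies in $L^1(\R^N\times\R^N)$ with integral $\norm{u}_{s,\gamma}^2<\infty$, while $\zeta_\rho(x)^2\to0$ for a.e.\ $x$. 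Thus everything reduces to choosing $\zeta_\rho$ so that $\norm{\zeta_\rho}_{s,\gamma}\to0$ as $\rho\to0^+$.

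The choice of $\zeta_\rho$ is the delicate point, and here the peculiarity of the critical exponent $\gamma=\tfrac{2s-N}{2}$ shows up. A naive scaling cut-off (equal to $1$ on $B_\rho$, $0$ off $B_{2\rho}$) does not work: the interaction between $B_\rho$ and $\R^N\setminus B_{2\rho}$ contributes a term of order $\rho^{2\gamma+N-2s}=\rho^0$, which does not decay when $2\gamma+N-2s=0$. I would therefore take a \emph{logarithmic} profile: $\zeta_\rho(x)=g_\rho(|x|)$ with $g_\rho$ smooth, $g_\rho\equiv1$ on $[0,\rho]$, $g_\rho\equiv0$ on $[R,\infty)$, essentially affine in $\ln t$ on $[\rho,R]$, so that $|\nabla\zeta_\rho(x)|\le C|x|^{-1}(\ln(R/\rho))^{-1}$ and $|\zeta_\rho(x)-\zeta_\rho(y)|\le C\,|\ln(|x|/|y|)|\,(\ln(R/\rho))^{-1}$ on the support of the difference. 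The gain of the factor $(\ln(R/\rho))^{-1}$ is precisely what converts the stubborn pieces into quantities of order $(\ln(R/\rho))^{-1}$.

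The remaining, and main, technical obstacle is the bound $\norm{\zeta_\rho}_{s,\gamma}^2\to0$. Writing $A:=B_R\setminus B_\rho$ and $E:=\R^N\setminus B_R$, the integrand vanishes unless the two points lie in distinct members of $\{B_\rho,A,E\}$, so one only has to control the four contributions from the pairs $B_\rho\times E$, $B_\rho\times A$, $A\times A$ (split into the near-diagonal part $|x-y|\le\tfrac12\max\{|x|,|y|\}$ and the rest), and $A\times E$. Each is handled by elementary integration in polar coordinates together with the substitution $r=\rho e^{t}$, which turns the radial integral over $[\rho,R]$ into an integral over $[0,\ln(R/\rho)]$ of $t^{k}e^{-at}$ with $a=2s-\gamma>0$. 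In the critical case each piece then comes out of order $(\ln(R/\rho))^{-1}$ (for instance the near-diagonal part of $A\times A$ is $\lesssim(\ln(R/\rho))^{-2}\int_\rho^R r^{2\gamma-2s+N-1}\,dr=(\ln(R/\rho))^{-2}\cdot\ln(R/\rho)$), while for $\gamma>\tfrac{2s-N}{2}$ every piece is $\lesssim\rho^{\varepsilon}$ for some $\varepsilon=\varepsilon(N,s,\gamma)>0$; either way $\norm{\zeta_\rho}_{s,\gamma}\to0$. Verifying these region-by-region bounds uniformly over the whole closed range $[\tfrac{2s-N}{2},2s)$, and in particular at the endpoint, is where the work lies; combined with the dominated-convergence step above it gives $\norm{u-u_\rho}_{s,\gamma}\to0$, and hence the density and the identity $\Hsgz=\Hsg$.
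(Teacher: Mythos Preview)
Your approach is correct and essentially the same as the paper's: both excise the origin with a logarithmic cut-off (you use a transition on $[\rho,R]$ with $R$ fixed, the paper uses $[h^2,h]$ with both endpoints shrinking, so that in either case the logarithmic width tends to infinity and yields the crucial $(\ln)^{-1}$ decay in the critical case $\gamma=\tfrac{2s-N}{2}$), then estimate $\norm{\zeta_\rho}_{s,\gamma}$ by a region-by-region splitting, and finish with dominated convergence for the cross term. One small slip: your sentence ``the integrand vanishes unless the two points lie in distinct members of $\{B_\rho,A,E\}$'' is false for $A\times A$, but you correctly include that region in your list; also, the paper first proves density of $C_0^{0,1}(\Omega\setminus\{0\})$ with the Lipschitz $\eta_h$ and then mollifies, whereas you assume a smooth $\zeta_\rho$ from the start---either route works.
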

\begin{proof}[\textbf{Proof}] The proof is split into two steps. \smallskip
	
\noindent \textbf{Step 1.}  We  show that
 $C_0^{0,1}(\Omega\setminus\{0\})$ is dense in  $H^{s}_0(\xO;|x|^\xg)$.
 For $u\in H^{s}_0(\xO;|x|^\xg)$ and $\xe>0$, we will show that there is a function $w \in C_0^{0,1}(\Omega \setminus \{0\})$ with support in $\Omega \setminus \{0\}$  such that
\ba \label{uw-1}
\| u-w \|_{s,\gamma}<\varepsilon.
\ea

Since $u\in H^{s}_0(\xO;|x|^\xg)$, by density, there exists $v\in C_0^\infty(\xO)$ such that
\ba\label{1a}
\norm{u-v}_{s,\gamma}<\frac{\xe}{3}.
\ea
For any $0<h<\min\{\frac{1}{4},\frac{\dist(0,\partial\xO)}{4}\}$, we consider the function
\ba \label{etah}
\eta_h(x):=\left\{ \BAL &1 \qquad&& \text{if }|x|>h, \\
&1-\frac{1}{\ln h}\ln\left(\frac{|x|}{h}\right)  &&\text{if } h^2\leq|x|\leq h,\\
&0&&\text{if }  |x|<h^2,
\EAL \right.
\ea
which is in $C^{0,1}(\R^N)$,
and set $v_h:=\eta_hv$. \medskip

Now we will show that
\ba \label{vvh}
\lim_{h\to 0}\norm{v-v_{h}}_{s,\gamma}=0.
\ea
Indeed, by the definition of the norm $\| \cdot\|_{s,\gamma}$, we have
\ba \label{vvh-2} \BAL
\norm{v-v_{h}}_{s,\gamma}^2 &=\int_{\{|x|\leq 2h\}}\int_{\{|y|\leq 2h\}}\frac{|v(x)-v_h(x)+v_h(y)-v(y)|^2}{|x-y|^{N+2s}}|y|^\xg dy |x|^\xg dx\\
&+2\int_{\{|x|> 2h\}}\int_{\{|y|\leq 2h\}}\frac{|v(x)-v_h(x)+v_h(y)-v(y)|^2}{|x-y|^{N+2s}}|y|^\xg dy |x|^\xg dx.
\EAL \ea

In the remaining of the proof,  $c$ denotes a generic constant which is dependent on $N,s$ and $\xg$, but independent of $h$ and $v$, and which may vary from one appearance to another.

The second term on the right-hand side of \eqref{vvh-2} is estimated as
\ba \label{vvh-3} \BAL
&\quad\ 2\int_{\{|x|> 2h\}}\int_{\{|y|\leq 2h\}}\frac{|v(x)-v_h(x)+v_h(y)-v(y)|^2}{|x-y|^{N+2s}}|y|^\xg dy |x|^\xg dx \\
&=2\int_{\{|x|> 2h\}}\int_{\{|y|\leq h\}}\frac{|v_h(y)-v(y)|^2}{|x-y|^{N+2s}}|y|^\xg dy |x|^\xg dx\\
&\leq c\,h^{\xg-2s} \int_{\{|y|\leq h\}}|v_h(y)-v(y)|^2|y|^\xg dy\\
&\leq c\, h^{\xg-2s}\norm{v}_{L^\infty(B(0,h))}^2\int_{\{|y|\leq h\}}|1-\eta_h(y)|^2|y|^\xg dy\\
&\leq c \norm{v}_{L^\infty(B(0,h))}^2 h^{N+2\xg-2s}  |\ln h|^{-2}
\\& \leq c\,  \norm{v}_{L^\infty(\Omega)}^2  h^{N+2\xg-2s}   |\ln h|^{-2},
\EAL \ea
where   $2\xg+N-2s\geq0$ and  $\norm{v}_{L^\infty(\Omega)}$   could be large, but it is independent of $h$.

The first term on the right-hand side of \eqref{vvh-2} is estimated as
\ba \label{vvhh-1} \BAL
&\quad\ \int_{\{|x|\leq 2h\}}\int_{\{|y|\leq 2h\}}\frac{|v(x)-v_h(x)+v_h(y)-v(y)|^2}{|x-y|^{N+2s}}|y|^\xg dy |x|^\xg dx \\
&\leq 2 \int_{\{|x|\leq 2h\}}\int_{\{|y|\leq 2h\}}\frac{(1-\eta_h(x))^2|v(x)-v(y)|^2}{|x-y|^{N+2s}}|y|^\xg dy |x|^\xg dx\\
&\quad+ 2 \int_{\{|x|\leq 2h\}}\int_{\{|y|\leq 2h\}}\frac{v(y)^2|\eta_h(x)-\eta_h(y)|^2}{|x-y|^{N+2s}}|y|^\xg dy |x|^\xg dx\\
&\leq 2 \int_{\{|x|\leq 2h\}}\int_{ \{|y|\leq 2h\}}\frac{ |v(x)-v(y)|^2}{|x-y|^{N+2s}}|y|^\xg dy |x|^\xg dx\\
&\quad+ 2\norm{v}_{L^\infty(B(0,2h))}^2 \int_{\{|x|\leq 2h\}}\int_{\{|y|\leq 2h\}}\frac{|\eta_h(x)-\eta_h(y)|^2}{|x-y|^{N+2s}}|y|^\xg dy |x|^\xg dx.
\EAL \ea
By the dominated convergence theorem we have
\ba \label{vvhh-2}
\lim_{h\to 0}\int_{\{|x|\leq 2h\}}\int_{ \{|y|\leq 2h\}}\frac{ |v(x)-v(y)|^2}{|x-y|^{N+2s}}|y|^\xg dy |x|^\xg dx=0.
\ea
Next we estimate the last term in \eqref{vvhh-1} by splitting it into several terms as follows
\ba\label{1} \BAL
&\quad\ \int_{\{|x|\leq 2h\}}\int_{\{|y|\leq 2h\}}\frac{|\eta_h(x)-\eta_h(y)|^2}{|x-y|^{N+2s}}|y|^\xg dy |x|^\xg dx \\
&=\int_{\{|x|<h^2\}}\int_{\{|y|<h^2\}} \ldots dx + \int_{\{|x|<h^2\}}\int_{\{h^2\leq|y|< 2h^2\}} \ldots dx \\
&\quad + \int_{\{|x|<h^2\}}\int_{\{2h^2 \leq|y|<h\}}\ldots dx + \int_{\{|x|<h^2\}}\int_{\{h \leq|y|\leq 2h\}} \ldots dx \\
&\quad +\int_{\{h^2\leq|x|<h\}}\int_{\{|y|<h^2\}} \ldots dx +\int_{\{h^2\leq|x|<h\}}\int_{\{h^2\leq|y|<h\}}\ldots dx 
\\ &\quad +\int_{\{h^2\leq|x|<h\}}\int_{\{h\leq|y|\leq 2h\}}\ldots dx + \int_{\{ h<|x|\leq 2h\}}\int_{\{|y| < h^2\}}\ldots dx \\
&\quad +\int_{\{ h \leq |x|\leq 2h\}}\int_{\{h^2\leq|y|<h\}}\ldots dx +\int_{\{ h \leq |x|\leq 2h\}}\int_{\{h\leq|y|\leq 2h\}} \ldots dx \\
&=:A_{h,1}+A_{h,2}+A_{h,3}+A_{h,4}+A_{h,5}+A_{h,6}+A_{h,7}+A_{h,8}+A_{h,9}+A_{h,10}.
\EAL \ea
We will estimate $A_{h,i}$, $i \in \{1,\ldots,10\}$, successively. \medskip

\noindent \textit{Estimate of $A_{h,1}$.} We note that $\eta_h(x)=0$ for any $|x|<h^2$, hence
\bal
A_{h,1}=0.
\eal

\noindent \textit{Estimate of $A_{h,2}$.} By the definition of $\eta_h$ in \eqref{etah}, the inequality $\ln t \leq t-1$ for any $t>0$ and the assumption that $\frac{2s-N}{2} \leq \gamma <2s$, we have
\ba \label{Ah2} \BAL
A_{h,2}&=|\ln h|^{-2}\int_{\{|x|<h^2\}}\int_{\{h^2\leq|y|<2h^2\}}\frac{|\ln\frac{|y|}{h^2}|^2}{|x-y|^{N+2s}}|y|^\xg dy |x|^\xg dx\\
&\leq h^{-4}|\ln h|^{-2}\int_{\{|x|<h^2\}}\int_{\{h^2\leq|y|<2h^2\}}\frac{||y|-h^2|^2}{|x-y|^{N+2s}}|y|^\xg dy |x|^\xg dx\\
&\leq h^{-4+2\xg}|\ln h|^{-2}\int_{\{|x|<h^2\}}\int_{\{h^2\leq|y|<2h^2\}}|x-y|^{-N-2s+2} dy |x|^\xg dx\\
&\leq  c\,  h^{2(N+2\gamma-2s)} |\ln h|^{-2}.
\EAL \ea
Here in the last estimate in \eqref{Ah2}, we have used the inequalities
\bal ||y|-h^2|^2\leq (|y|-|x|)^2\leq |y-x|^2 \quad{\rm for}\ \, |x|<h^2 \leq |y|. 
\eal

\noindent \textit{Estimate of $A_{h,3}$.} By the definition of $\eta_h$ in \eqref{etah} and since $|x-y|  \geq \frac{|y|}{2}$ for any $|x|<h^2$ and $2h^2 \leq  |y|<h$, we obtain
\ba \label{Ah3} \BAL
A_{h,3}&=|\ln h|^{-2} \int_{\{|x|<h^2\}}\int_{\{2h^2\leq|y|<h\}}\frac{|\ln\frac{|y|}{h^2}|^2}{|x-y|^{N+2s}}|y|^\xg dy |x|^\xg dx\\
&\leq c\, h^{2N+2\xg}|\ln h|^{-2}\int_{\{2h^2\leq|y|<h\}}|\ln\frac{|y|}{h^2}|^2|y|^{\xg-N-2s} dy \\
&\leq  c\, h^{2(N+2\gamma-2s)}|\ln h|^{-2}.
\EAL \ea

\noindent \textit{Estimate of $A_{h,4}$.} By the definition of $\eta_h$ in \eqref{etah}, we have
\bal
A_{h,4}=\int_{\{|x|<h^2\}}\int_{\{h\leq|y|\leq 2h\}}\frac{1}{|x-y|^{N+2s}}|y|^\xg dy |x|^\xg dx \leq c\, h^{-2s+\xg+2(N+\xg)}.
\eal

\noindent \textit{Estimate of $A_{h,5}$.}  By Fubini's theorem and by exchanging variables $x$ and $y$ in $A_{h,5}$, we obtain that 
\bal
&A_{h,5}=\int_{\{h^2\leq|x|<h\}}\int_{\{|y|<h^2\}}\frac{|\eta_h(x)-\eta_h(y)|^2}{|x-y|^{N+2s}}|y|^\xg dy |x|^{\xg} dx=A_{h,2} + A_{h,3} \leq c\, h^{2(N+2\gamma-2s)}|\ln h|^{-2}.
\eal
\noindent \textit{Estimate of $A_{h,6}$.} By the definition of $\eta_h$ in \eqref{etah}, we obtain
\bal
A_{h,6}&=|\ln h|^{-2}\int_{\{h^2\leq|x|<h\}}\int_{\{h^2\leq|y|<h\}}\frac{|\ln|x|-\ln|y||^2}{|x-y|^{N+2s}}|y|^\xg dy |x|^\xg dx\\
&\leq 2|\ln h|^{-2}\int_{\{h^2\leq|x|<h\}}\int_{\{|x|\leq|y|<h\}}\frac{|\ln|x|-\ln|y||^2}{|x-y|^{N+2s}} |y|^\xg dy |x|^{\xg} dx\\
&\leq 2|\ln h|^{-2}\int_{\{h^2\leq|x|<h\}}\int_{\{|x|\leq|y|<2|x|\}}\frac{|\ln|x|-\ln|y||^2}{|x-y|^{N+2s}} |y|^\xg dy |x|^{\xg} dx\\
&+ 2|\ln h|^{-2}\int_{\{h^2\leq|x|<h\}}\int_{\{2|x|\leq|y|<2h\}}\frac{|\ln|x|-\ln|y||^2}{|x-y|^{N+2s}} |y|^\xg dy |x|^{\xg} dx=:A_{h,6,1}+A_{h,6,2}.
\eal
Since $|\ln|x| - \ln|y|| \leq \frac{|x-y|}{|x|}$, we find
\bal
A_{h,6,1}&\leq 2|\ln h|^{-2}\int_{\{h^2\leq|x|<h\}}\int_{\{|x|\leq|y|<2|x|\}}|x-y|^{-N-2s+2}|y|^{\xg} dy |x|^{\xg-2} dx\\
&\leq c\,h^{2-2s}|\ln h|^{-2}\int_{\{h^2\leq|x|<h\}}|x|^{2\xg-2s}dx\\
&\leq \left\{
\BAL
&c\,h^{2-2s}|\ln h|^{-1}  &\quad\text{if}\;\;\xg=\frac{2s-N}{2},\\
&c\,h^{N+2\gamma+2-4s} |\ln h|^{-2}  &\quad\text{if}\;\;\xg>\frac{2s-N}{2}.
\EAL
\right.
\eal

Also,
\bal
A_{h,6,2}&\leq  2|\ln h|^{-2}\int_{\{h^2\leq|x|<h\}}\int_{\{2|x|\leq|y|<2h\}}\frac{|\ln|x|-\ln|y||^2}{|x-y|^{N+2s}} |y|^\xg dy |x|^{\xg} dx\\
&\leq c\,|\ln h|^{-2}\int_{\{h^2\leq|x|<h\}}\int_{\{2|x|\leq|y|<2h\}}|\ln\frac{|y|}{|x|}|^2|y|^{-N-2s+\xg} dy|x|^{\xg} dx\\
&\leq c\,|\ln h|^{-2}\int_{\{h^2\leq|x|<h\}}|x|^{2\xg-2s} dx\\
&\leq \left\{
\BAL
&c\, |\ln h|^{-1} &&\quad\text{if}\;\;\xg=\frac{2s-N}{2}\\
&c\, h^{N+2\xg-2s}|\ln h|^{-2} &&\quad\text{if}\;\;\gamma>\frac{2s-N}{2}.
\EAL
\right.
\eal
Combining the above estimates, we deduce
\bal
 A_{h,6}\leq c\,|\ln h|^{-1}.
\eal
\noindent \textit{Estimate of $A_{h,7}$.} By the definition of $\eta_h$ in \eqref{etah}, we have
\bal
A_{h,7}&=|\ln h|^{-2}\int_{\{h^2\leq|x|<h\}}\int_{\{h\leq|y|\leq 2h\}}\frac{|\ln\frac{|x|}{h}|^2}{|x-y|^{N+2s}}|y|^\xg dy |x|^\xg dx\\
&\leq |\ln h|^{-2}\int_{\{h^2\leq|x|<\frac{h}{2}\}}\int_{\{h\leq|y|\leq 2h\}}\frac{|\ln\frac{|x|}{h}|^2}{|x-y|^{N+2s}}|y|^\xg dy |x|^\xg dx\\
&\quad+|\ln h|^{-2}\int_{\{\frac{h}{2}\leq|x|\leq h\}}\int_{\{h\leq|y|\leq 2h\}}\frac{|\ln\frac{|x|}{h}|^2}{|x-y|^{N+2s}}|y|^\xg dy |x|^\xg dx \\
&=:A_{h,7,1}+A_{h,7,2}.
\eal
We have
\bal
A_{h,7,1}&\leq c\, h^{\xg-2s}|\ln h|^{-2}\int_{\{h^2\leq|x|<\frac{h}{2}\}}|\ln\frac{|x|}{h}|^2|x|^\xg dx \leq c\, h^{N+2\xg-2s}|\ln h|^{-2}.
\eal
By using the estimate $|\ln\frac{|x|}{h}| \leq 2\frac{|x-y|}{h}$ for $\frac{h}{2}\leq|x| \leq h \leq |y|$, we obtain
\bal
A_{h,7,2} \leq  c\, h^{N+2\gamma-2s}|\ln h|^{-2}.
\eal
From the above estimates, we derive
\bal
 A_{h,7}\leq c\, h^{N+2\gamma-2s}|\ln h|^{-2}.
\eal
\noindent \textit{Estimate of $A_{h,8}$.}  Using Fubini's theorem and exchanging variables $x$ and $y$ in $A_{h,8}$ lead to 
\bal
A_{h,8}=A_{h,4} \leq c\, h^{-2s+\xg+2(N+\xg)}.
\eal
\noindent \textit{Estimate of $A_{h,9}$.}  Similarly, using Fubini's theorem and exchanging variables $x$ and $y$ in $A_{h,9}$ imply 
\bal
A_{h,9}=A_{h,7}\leq c\,h^{N+2\gamma-2s}|\ln h|^{-2}.
\eal
\noindent \textit{Estimate of $A_{h,10}$.} By the definition of $\eta_h$ in \eqref{etah},
\bal
A_{h,10}=0.
\eal
Finally, by plugging the estimates of $A_{h,i}$, $i \in \{1,\ldots,10\}$ into \eqref{1}, we derive, for $h$ small,
\ba \label{vvhh-3}
 \int_{\{|x|\leq 2h\}}\int_{\{|y|\leq 2h\}}\frac{|\eta_h(x)-\eta_h(y)|^2}{|x-y|^{N+2s}}|y|^\xg dy |x|^\xg dx \leq c\,|\ln h|^{-1}.
\ea

Combining \eqref{vvh-2}--\eqref{vvhh-2} and \eqref{vvhh-3}, we deduce \eqref{vvh}.

As a consequence, there exists $h_0$ such that
\ba\label{2a}
\norm{v-v_{h_0}}_{s,\gamma}<\frac{\xe}{3}.
\ea
From \eqref{1a} and \eqref{2a}, we deduce
\bal
\norm{u-v_{h_0}}_{s,\gamma} < \frac{2\varepsilon}{3}.
\eal
Since $v_{h_0} \in C_0^{0,1}(\Omega \setminus \{0\})$, we obtain \eqref{uw-1} with $w=v_{h_0}$. Thus $C_0^{0,1}(\Omega\setminus\{0\})$ is dense in  $H^{s}_0(\xO;|x|^\xg)$.\medskip

\noindent \textbf{Step 2.} We show that $C_0^\infty(\Omega\setminus\{0\})$ is dense in  $H^{s}_0(\xO;|x|^\xg)$. For $u\in H^{s}_0(\xO;|x|^\xg)$ and $\xe>0$, we will show that there is a function $w \in C_0^{\infty}(\Omega \setminus \{0\})$ with support in $\Omega \setminus \{0\}$  such that
\ba \label{uw-11}
\| u-w \|_{s,\gamma}<\varepsilon.
\ea

Consider a sequence of mollifiers $\{\xz_n\}_{n \in \N}$. Let $n_0\in \BBN$ large enough such that
\bal
\overline{\supp(\xz_n*v_{h_0})}\cup\overline{\supp(v_{h_0})}\subset \xO_1\Subset\xO_2\Subset\xO\setminus\{0\},
\eal
where $\xO_1$ and $\xO_2$ are open sets. We will show that
\ba\label{2}
\lim_{n \to \infty}\norm{v_{h_0}-\xz_n*v_{h_0}}_{s,\gamma} = 0.
\ea
Indeed, we write
\bal
\norm{v_{h_0}-\xz_n*v_{h_0}}_{s,\gamma}^2
&=\int_{\xO_2}\int_{\xO_2}\frac{|v_{h_0}(x)-\xz_n \ast v_{h_0}(x)+\xz_n \ast v_{h_0}(y)-v_{h_0}(y)|^2}{|x-y|^{N+2s}}|y|^\xg dy |x|^\xg dx\\
&+2\int_{\BBR^N\setminus\xO_2}\int_{\xO_2}\frac{|v_{h_0}(x)-\xz_n \ast v_{h_0}(x)+\xz_n \ast v_{h_0}(y)-v_{h_0}(y)|^2}{|x-y|^{N+2s}}|y|^\xg dy |x|^\xg dx.
\eal
Since $\zeta_n \ast v_{h_0} \to v_{h_0}$ in $L^2(\Omega_1)$ as $n \to \infty$, we have
\bal
&\quad \int_{\BBR^N\setminus\xO_2}\int_{\xO_2}\frac{|v_{h_0}(x)-\xz_n*v_{h_0}(x)+\xz_n*v_{h_0}(y)-v_{h_0}(y)|^2}{|x-y|^{N+2s}}|y|^\xg dy |x|^\xg dx\\
&\leq c\,\int_{\xO_1}|\xz_n*v_{h_0}(y)-v_{h_0}(y)|^2dy\to 0 \quad \text{as } n \to \infty,
\eal
where $c>0$ depends on $N,s,\xg,\xO_1,\xO_2$.

We also find that
\bal
&\quad \int_{\xO_2}\int_{\xO_2}\frac{|v_{h_0}(x)-\xz_n*v_{h_0}(x)+\xz_n*v_{h_0}(y)-v_{h_0}(y)|^2}{|x-y|^{N+2s}}|y|^\xg dy |x|^\xg dx\\
&\leq c\, \int_{\xO_2}\int_{\xO_2}\frac{|v_{h_0}(x)-\xz_n*v_{h_0}(x)+\xz_n*v_{h_0}(y)-v_{h_0}(y)|^2}{|x-y|^{N+2s}}dy dx\to 0 \quad \text{as } n \to \infty.
\eal
From the above estimates, we get \eqref{2}. Consequently, there exists $n_0\in\BBN$ such that
\ba\label{3a}
\norm{v_{h_0}-\xz_{n_0}*v_{h_0}}_{s,\gamma}<\frac{\xe}{3}.
\ea
Finally combining \eqref{1a}, \eqref{2a} and \eqref{3a} yields
\bal
\| u - \zeta_{n_0} \ast v_{h_0} \|_{s,\gamma}  < \varepsilon.
\eal
Therefore, we obtain \eqref{uw-11} with $w=\zeta_{n_0} \ast v_{h_0}$. The proof is complete.
\end{proof}

\subsection{The space $\mathbf{H}_{\mu,0}^s(\Omega)$} \label{subsec:Hm}

  For any $\mu \geq \mu_0$, we deduce from \eqref{mu_00} that 
\ba \label{compare-norm}
\frac{C_{N,s}}{2}\left(1 + \frac{\mu^-}{\mu_0}\right) \| u \|_{s,0}^2\leq	 \norm{u}_{\mu}^2 \leq \frac{C_{N,s}}{2}\left(1 - \frac{\mu^+}{\mu_0}\right) \norm{u}_{s,0}^2, \quad \forall u\in C^\infty_0(\xO).
\ea
Therefore, for $\xm>\xm_0$,  the space $\mathbf{H}_{\mu,0}^s(\Omega)$ defined in the Introduction is a Hilbert space. 

Nevertheless, when $\xm=\xm_0$, the space $\mathbf{H}_{\mu_0,0}^s(\Omega)$ is no longer Hilbert. We point out below that $\mathbf{H}_{\mu_0,0}^s(\Omega)$  can be associated with $H_0^s(\Omega;|x|^{-\frac{N-2s}{2}})$.

Let  $\mathcal{Z}$ be the space of all Cauchy sequences $\{u_n\}$ in $\mathbf{H}_{\mu_0,0}^s(\Omega).$ We introduce the equivalence relation $\sim$ on $\mathcal{Z}$: for any $\{u_n\}, \{u_n'\} \in \mathcal{Z}$, we write 
$\{u_n\}\sim\{u_n'\}$ if and only if $\lim_{n\to\infty}\norm{u_n-u_n'}_{\xm_0}=0$. The quotient map associated with $\sim$ is the following surjective map
\bal
\mathcal{Q}: &\,\,\, \mathcal{Z}\;\;\; \to \mathbf{\tilde H}_{\mu_0,0}^s(\Omega):=\mathcal{Z}/\sim \\
&\{u_n\} \mapsto ~ [\{u_n\}].
\eal
Then the quotient space $\mathbf{\tilde H}_{\mu_0,0}^s(\Omega)$ endowed with the inner product
\bal
\xr([\{u_n\}],[\{ u_n'\}]):=\lim_{n\to\infty}\ll u_n, u_n'\gg_{\mu},
\eal
is a Hilbert space.

Let $i:\mathbf{H}_{\mu_0,0}^s(\Omega)\to \mathbf{\tilde H}_{\mu_0,0}^s(\Omega)$ be such that
\bal
i(u)=[\{u\}],\quad\forall x\in \mathbf{H}_{\mu_0,0}^s(\Omega),
\eal
where $\{u\}$ denotes the constant sequence in $\mathbf{H}_{\mu_0,0}^s(\Omega)$. Then $i(\mathbf{H}_{\mu_0,0}^s(\Omega))$ is dense in $\mathbf{\tilde H}_{\mu_0,0}^s(\Omega).$

We have the following observations.

(a) For each $\mathbf{u} \in \mathbf{\tilde H}_{\mu_0,0}^s(\Omega)$, by Proposition \ref{density2}, we can easily show that there exists a Cauchy sequence $\{u_n\}\subset C^\infty_0(\xO\setminus\{0\})$ such that $\mathbf{u}=[\{u_n\}].$ Set $\xg=-\frac{N-2s}{2}$ and $v_n=|x|^{-\xg}u_n.$ Then by \eqref{normHs}, we can easily see that $\{v_n\}$ is a Cauchy sequence in $H^{s}_0(\xO;|x|^\xg).$ This implies the existence of a function $v\in H^{s}_0(\xO;|x|^\xg)$ such that $v_n\to v$ in $H^{s}_0(\xO;|x|^\xg)$ and
\ba \label{rhouu}
\xr(\mathbf{u},\mathbf{u})= \lim_{n \to \infty} \| u_n \|_{\mu}^2 = \lim_{n \to \infty} \frac{C_{N,s}}{2} \| v_n \|_{s,\gamma}^2 =  \frac{C_{N,s}}{2}\norm{v}^2_{s,\xg}.
\ea

(b) Conversely, for any $v\in H^{s}_0(\xO;|x|^\xg)$, by Proposition \ref{density}, there exists a sequence $\{u_n\}\subset C^\infty_0(\xO\setminus\{0\})$ such that $v_n\to v$ in $H^{s}_0(\xO;|x|^\xg).$ Set $u_n=|x|^{\xg}v_n,$ then by \eqref{normHs}, $\{u_n\}\subset C_0^\infty(\xO\setminus\{0\})$ is a Cauchy sequence in $\mathbf{H}_{\mu_0,0}^s(\Omega).$ Hence there exists $\mathbf{u}\in\mathbf{\tilde H}_{\mu_0,0}^s(\Omega)$ such that $\mathbf{u}=[\{u_n\}]$ and \eqref{rhouu} holds.

In light of the above observations, we may identify the space $\mathbf{\tilde H}_{\mu_0,0}^s(\Omega)$ with the space
\bal
\mathcal{H}_0(\xO): =\{|x|^{\gamma}v: v \in H^{s}_0(\xO;|x|^\xg) \}
\eal
endowed with the norm
\bal
\norm{u}_{\mathcal{H}_0(\xO)}:=\sqrt{\frac{C_{N,s}}{2}}\norm{v}_{s,\xg}.
\eal

\textit{In the sequel, for the sake of convenience, we set $\mathbf{ H}_{\mu_0,0}^s(\Omega)=\mathcal{H}_0(\xO).$}
 
\begin{remark}
Let $\xg=-\frac{N-2s}{2}$ and $v\in C_0^\infty(\xO)$ such that $v=1$ in $B_\xe(0)$ for some $\xe>0$ small enough such that $B_{4\xe}(0)\subset\xO.$ Then $v\in  H^{s}_0(\xO;|x|^\xg)$, and hence $u=|x|^{\gamma}v\in \mathbf{ H}_{\mu_0,0}^s(\Omega).$ However,
\bal
\int_{\BBR^N}\int_{\BBR^N}\frac{|u(x)-u(y)|^2}{|x-y|^{N+2s}}dydx=-\xm_0 \int_{\BBR^N}\frac{u(x)^2}{|x|^{2s}}dx=+\infty.
\eal
\end{remark}

\begin{proposition} \label{density2}
For any $\mu \geq \mu_0$,  $C_0^\infty(\Omega\setminus\{0\})$ is dense in $\mathbf{H}_{\mu,0}^s(\Omega)$.	
\end{proposition}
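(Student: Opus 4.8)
The goal is to show that $C_0^\infty(\Omega\setminus\{0\})$ is dense in $\mathbf{H}_{\mu,0}^s(\Omega)$ for every $\mu\ge\mu_0$. I will distinguish the two regimes $\mu>\mu_0$ and $\mu=\mu_0$, since the underlying space is a genuine Hilbert space only in the first case; the critical case is essentially forced by the way $\mathbf{H}_{\mu_0,0}^s(\Omega)$ was defined via completion and its identification with $\mathcal H_0(\Omega)$.

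First, for $\mu>\mu_0$, recall that $\mathbf{H}_{\mu,0}^s(\Omega)$ is by definition the closure of $C_0^\infty(\R^N)$ functions with compact support in $\Omega$ under $\|\cdot\|_\mu$, so it suffices to approximate an arbitrary $v\in C_0^\infty(\R^N)$ with $\supp v\Subset\Omega$ by functions in $C_0^\infty(\Omega\setminus\{0\})$ in the $\|\cdot\|_\mu$-norm. The key point is the norm-equivalence \eqref{compare-norm}, namely $\|\cdot\|_\mu^2$ is comparable to $\|\cdot\|_{s,0}^2$ on $C_0^\infty(\Omega)$ when $\mu>\mu_0$; hence $\mathbf{H}_{\mu,0}^s(\Omega)$ coincides as a set, with equivalent norms, with $H_0^s(\Omega;|x|^0)=H_0^s(\Omega)$ (the standard fractional Sobolev space with zero exterior data). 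For the equivalence to transfer density statements I then invoke Proposition \ref{density} with $\gamma=0$ (which is admissible since $0\in[\tfrac{2s-N}{2},2s)$ as $N\ge2$): it gives that $C_0^\infty(\Omega\setminus\{0\})$ is dense in $H_0^s(\Omega;|x|^0)$ in the $\|\cdot\|_{s,0}$-norm, and the norm-equivalence immediately upgrades this to density in the $\|\cdot\|_\mu$-norm. So the case $\mu>\mu_0$ reduces to Proposition \ref{density} plus \eqref{compare-norm}.

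For the critical case $\mu=\mu_0$, I use the identification established just before the statement: $\mathbf{H}_{\mu_0,0}^s(\Omega)=\mathcal H_0(\Omega)=\{|x|^{\gamma}v:\ v\in H_0^s(\Omega;|x|^\gamma)\}$ with $\gamma=-\tfrac{N-2s}{2}$, and the norm $\|\,|x|^\gamma v\,\|_{\mathcal H_0(\Omega)}=\sqrt{C_{N,s}/2}\,\|v\|_{s,\gamma}$. Given $u=|x|^\gamma v\in\mathcal H_0(\Omega)$ with $v\in H_0^s(\Omega;|x|^\gamma)$, apply Proposition \ref{density} (now with this $\gamma$, which again lies in $[\tfrac{2s-N}{2},2s)$) to produce $v_n\in C_0^\infty(\Omega\setminus\{0\})$ with $\|v-v_n\|_{s,\gamma}\to0$. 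Set $u_n:=|x|^\gamma v_n$; since $v_n$ has compact support away from $0$, $u_n\in C_0^\infty(\Omega\setminus\{0\})$ as well, and by the isometry (up to the constant $\sqrt{C_{N,s}/2}$) between the $\|\cdot\|_{s,\gamma}$-norm and the $\mathcal H_0(\Omega)$-norm — which is exactly relation \eqref{normHs} read on $C_0^\infty(\Omega\setminus\{0\})$ — we get $\|u-u_n\|_{\mathbf{H}_{\mu_0,0}^s(\Omega)}=\sqrt{C_{N,s}/2}\,\|v-v_n\|_{s,\gamma}\to0$. This gives the claimed density.

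\textbf{Main obstacle.} There is no deep difficulty left once Proposition \ref{density} is in hand; the real content has already been extracted into that proposition (and into \eqref{compare-norm} and the identification of $\mathbf H_{\mu_0,0}^s$). The only point that needs care is bookkeeping in the critical case: one must check that multiplying by $|x|^\gamma$ maps $C_0^\infty(\Omega\setminus\{0\})$ into itself — true precisely because $\gamma$ is a fixed exponent and the functions are supported away from the origin, so $|x|^\gamma$ is smooth on a neighborhood of each support — and that the norm identity \eqref{normHs}, which was stated for $u\in C_0^\infty(\Omega)$ via $v=|x|^{-\gamma}u$, applies equally to the pair $(u_n,v_n)$ here (it does, since $u_n\in C_0^\infty(\Omega\setminus\{0\})\subset C_0^\infty(\Omega)$ and $v_n=|x|^{-\gamma}u_n$). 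Modulo this routine verification, the proof is a short deduction from Proposition \ref{density}.
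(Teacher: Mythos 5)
Your proof is correct, and for the subcritical range it takes a genuinely different route from the paper's. The paper treats all $\mu\ge\mu_0$ uniformly: starting from $u\in\mathbf{H}_{\mu,0}^s(\Omega)$, it first picks $u_\varepsilon\in C_0^\infty(\Omega)$ with $\|u-u_\varepsilon\|_\mu<\varepsilon/2$, sets $v_\varepsilon=|x|^{-\tau_+(s,\mu)}u_\varepsilon$, invokes the \emph{proof} (not just the statement) of Proposition \ref{density} to approximate $v_\varepsilon$ by some $\tilde v_\varepsilon\in C_0^\infty(\Omega\setminus\{0\})$ in $\|\cdot\|_{s,\tau_+}$, and then transforms back via $\tilde u_\varepsilon=|x|^{\tau_+}\tilde v_\varepsilon$ using the isometry \eqref{normHs}. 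Your argument instead splits the two regimes: for $\mu>\mu_0$ you exploit the norm equivalence \eqref{compare-norm} to identify $\mathbf{H}_{\mu,0}^s(\Omega)$ with $H_0^s(\Omega;|x|^0)$ and then apply Proposition \ref{density} with $\gamma=0$ directly, while for $\mu=\mu_0$ you work through the identification with $\mathcal H_0(\Omega)$ and apply Proposition \ref{density} to the representative $v\in H_0^s(\Omega;|x|^{(2s-N)/2})$. The two approaches cost roughly the same, but yours has a small technical advantage: the paper has to rerun the cut-off argument of Proposition \ref{density} on $v_\varepsilon=|x|^{-\tau_+}u_\varepsilon$, which for $\mu>0$ (so $\tau_+>0$) is not $C_0^\infty(\Omega)$ and is in general unbounded near the origin, whereas you either avoid any transformation ($\gamma=0$) or apply the statement of Proposition \ref{density} to a function already placed in $H_0^s(\Omega;|x|^\gamma)$ by the very definition of $\mathcal H_0(\Omega)$. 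The paper's version is more uniform and avoids a case split, which is its main aesthetic gain. Both are valid.
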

\begin{proof}[\textbf{Proof}]
	For any $u\in \mathbf{H}_{\mu,0}^s(\Omega)$ and $\xe>0$, by the definition of $\mathbf{H}_{\mu,0}^s(\Omega)$, there exists $u_\xe\in C^\infty_0(\Omega)$  such that
	\ba \label{uuep-1} \|u- u_\xe\|_\mu < \frac \xe2.
	\ea
	Let
	 $v_\xe =|x|^{-\gamma}u_\xe \in C^\infty(\Omega \setminus \{0\})$  with $\gamma=\tau_+(s,\mu)$, then, by \eqref{normHs}, $v_\xe \in \Hsg$ and
	\bal \|u-|\cdot|^{\gamma}v_\xe\|_{\mu} < \frac \xe2.
	\eal
	From the proof of Proposition \ref{density},  there exists $\tilde v_\xe\in C_0^\infty(\Omega\setminus\{0\})$ such that
	\bal\sqrt{\frac{C_{N,s}}{2}}\|\tilde v_\xe-v_\xe\|_{s,\xg} < \frac \xe2.
	\eal
Put $\tilde u_{\varepsilon} =  |x|^{\gamma}\tilde v_\xe$ then $\tilde u_{\varepsilon} \in  C_0^\infty(\Omega\setminus\{0\})$ and from the equality \eqref{normHs}, we have
\ba \label{uuep-2}\| \tilde u_\xe - u_\xe\|_{\mu} < \frac \xe2.
\ea
Combining \eqref{uuep-1} and \eqref{uuep-2} implies
\bal \|\tilde u_\xe - u \|_{\mu} <  \xe. \eal
	By the arbitrariness of $\xe>0$, we derive that  $C_0^\infty(\Omega\setminus\{0\})$ is dense in  $\mathbf{H}_{\mu,0}^s(\Omega)$.
\end{proof}

Recall that the fractional Sobolev exponent is
\ba \label{Sob-exp}
2_s^*=\frac{2N}{N-2s}.
\ea

\begin{lemma}\label{lem_interpolation}
 Assume that  $\mu\geq \mu_0$, $\alpha < 2s$ and $1\leq q<\min\left\{2^*_s,\, \frac{2N-2\alpha}{N-2s}\right\}$. Then  $\Hm$ continuously and compactly embedded into $L^q(\Omega;|x|^{-\alpha})$.
Moreover, there exists a positive constant $c=c(N,\Omega,s,\mu,\alpha,q)$ such that
\ba\label{eq_interpolation}
	\| u \|_{L^q(\Omega;|x|^{-\alpha})}  \leq c\,\| u \|_{\mu}, \quad \forall \, u \in \Hm.
\ea

\end{lemma}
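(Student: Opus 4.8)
The plan is to transport the problem to the space $\Hsg$ with $\gamma=\tau_+(s,\mu)$ via Theorem~\ref{th:main-1}(ii), read off the continuous embedding directly from Theorem~\ref{th:main-1}(iii), and then obtain compactness by combining local compactness in $\Omega\setminus\{0\}$ with the strict subcriticality $q<\frac{2N-2\alpha}{N-2s}$.

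First I would fix $\gamma=\tau_+(s,\mu)$, which lies in $[\frac{2s-N}{2},2s)$ by \eqref{tptm}, and for $u\in\Hm$ set $v=|x|^{-\gamma}u$. By Theorem~\ref{th:main-1}(ii) (together with the identification $\mathbf{H}_{\mu_0,0}^s(\Omega)=\mathcal{H}_0(\Omega)$ in the critical case) we have $v\in\Hsg$, and from the identity \eqref{normHs} — extended by density when $\mu>\mu_0$, and being the definition of the norm when $\mu=\mu_0$ — we have $\|u\|_\mu=\sqrt{C_{N,s}/2}\,\|v\|_{s,\gamma}$. Applying Theorem~\ref{th:main-1}(iii) with $\beta=\alpha$, which is legitimate since $\alpha<2s$ and $1\le q<\min\{2_s^*,\frac{2N-2\alpha}{N-2s}\}$, we obtain
\[
\|u\|_{L^q(\Omega;|x|^{-\alpha})}=\big\||\cdot|^{\gamma}v\big\|_{L^q(\Omega;|x|^{-\alpha})}\le c\,\|v\|_{s,\gamma}\le c'\,\|u\|_\mu,
\]
which is \eqref{eq_interpolation}.

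For the compactness, let $\{u_n\}$ be bounded in $\Hm$ and put $v_n=|x|^{-\gamma}u_n$, a bounded sequence in the Hilbert space $\Hsg$. I would choose $q_1$ with $q<q_1<\min\{2_s^*,\frac{2N-2\alpha}{N-2s}\}$ (possible by hypothesis); the continuous embedding just proved shows $\{u_n\}$ is bounded in $L^{q_1}(\Omega;|x|^{-\alpha})$, and — taking exponent $2$ and weight $1$, allowed since $2<2_s^*$ — also in $L^2(\Omega)$. On any ball $B\Subset\Omega\setminus\{0\}$ the weight $|x|^\gamma$ is bounded above and below and Lipschitz, so $\{v_n\}$ is bounded in $W^{s,2}(B)$ and hence, by the Rellich–Kondrachov theorem for fractional Sobolev spaces, precompact in $L^2(B)$. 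Covering $\Omega\setminus\{0\}$ by countably many such balls and using a diagonal argument, I extract a subsequence (not relabeled) with $v_n\to v$ a.e.\ in $\Omega\setminus\{0\}$; by weak compactness in $\Hsg$ one may take $v\in\Hsg$, and then $u_n=|x|^\gamma v_n\to|x|^\gamma v=:u$ a.e.\ in $\Omega$ with $u\in\Hm$. To upgrade a.e.\ convergence to convergence in $L^q(\Omega;|x|^{-\alpha})$, note that $\alpha<2s\le N$ makes $|x|^{-\alpha}\,dx$ a finite measure on $\Omega$, and Hölder's inequality with exponents $q_1/q$ and $q_1/(q_1-q)$ gives, for every measurable $E\subset\Omega$,
\[
\int_E|u_n|^q|x|^{-\alpha}\,dx\le\Big(\int_E|u_n|^{q_1}|x|^{-\alpha}\,dx\Big)^{q/q_1}\Big(\int_E|x|^{-\alpha}\,dx\Big)^{1-q/q_1}\le C\Big(\int_E|x|^{-\alpha}\,dx\Big)^{1-q/q_1},
\]
so $\{|u_n|^q\}$ is uniformly integrable with respect to $|x|^{-\alpha}\,dx$. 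Combining this with the a.e.\ convergence and Fatou's lemma, Vitali's convergence theorem yields $u_n\to u$ in $L^q(\Omega;|x|^{-\alpha})$. Since every bounded sequence in $\Hm$ admits such a subsequence, the embedding is compact.

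The step I expect to be the main obstacle is the compactness argument near the origin (and near $\partial\Omega$): there $\Hsg$ carries no standard Sobolev structure because the weight $|x|^\gamma$ is degenerate or singular at $0$, so Rellich–Kondrachov applies only locally in $\Omega\setminus\{0\}$. The way around this is precisely to produce the a.e.\ limit only locally and then exploit the gap $q<q_1$ through the uniform-integrability estimate above — which is where the strict subcriticality hypothesis $q<\frac{2N-2\alpha}{N-2s}$ enters decisively.
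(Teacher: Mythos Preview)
Your argument for the continuous embedding is circular: in the paper's logical structure, Theorem~\ref{th:main-1}(iii) is \emph{derived from} Lemma~\ref{lem_interpolation} (see the proof of Theorem~\ref{th:main-1} in Section~\ref{sec:funcset}, where part (iii) is obtained by applying \eqref{eq_interpolation} after the change of variable $u=|x|^\gamma v$). Invoking Theorem~\ref{th:main-1}(iii) to prove \eqref{eq_interpolation} therefore assumes what you are trying to establish. The paper's direct proof of the continuous embedding runs as follows: for $\alpha\le 0$ it reduces to the unweighted embedding $\Hm\hookrightarrow L^q(\Omega)$ (via \eqref{compare-norm} and the standard fractional Sobolev embedding when $\mu>\mu_0$, and via \cite{Fra-2009} when $\mu=\mu_0$); for $0<\alpha<2s$ it chooses $\tilde\alpha\in(\alpha,2s)$ close to $2s$ and interpolates by H\"older,
\[
\int_\Omega |x|^{-\alpha}|u|^q\,dx\le\Big(\int_\Omega |x|^{-\tilde\alpha}|u|^2\,dx\Big)^{\alpha/\tilde\alpha}\Big(\int_\Omega |u|^{\frac{\tilde\alpha}{\tilde\alpha-\alpha}(q-\frac{2\alpha}{\tilde\alpha})}\,dx\Big)^{1-\alpha/\tilde\alpha},
\]
bounding the first factor by the Hardy-type inequality \eqref{est:Hardy1} and the second by the unweighted case, since the exponent there is below $2_s^*$ precisely when $q<\frac{2N-2\alpha}{N-2s}$.

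Your compactness argument, by contrast, is a genuinely different route from the paper's and is correct once the continuous embedding is in hand. The paper exploits the same H\"older splitting: if $u_n\rightharpoonup 0$ in $\Hm$, the first factor stays bounded and the second tends to zero by the compact unweighted embedding, so the product vanishes. Your approach---local Rellich--Kondrachov in $\Omega\setminus\{0\}$ to get a.e.\ convergence, then uniform integrability from the gap $q<q_1$ and Vitali---is more hands-on but has the merit of not relying on the specific interpolation structure; it would work for any continuous embedding with a strictly subcritical exponent. The paper's argument is shorter because the compactness comes essentially for free from the proof of continuity.
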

\begin{proof}[\textbf{Proof}] We consider two cases. \medskip

\noindent \textbf{Case 1:} $\alpha\leq 0$.	
If $\xm>\xm_0$, we infer from \eqref{compare-norm} that
$\mathbf {H}_{\mu,0}^s(\Omega) = H_{0}^s(\Omega)$.
	This and the well-known fractional embedding (see \cite{DPV}) imply that the embedding
	$\mathbf{H}_{\mu,0}^s(\Omega) \hookrightarrow L^q(\xO)
	$
	is continuous for any $q\in[1,2_s^*]$ and compact for any $q\in[1,2_s^*)$. If $\mu=\mu_0$ then $H_0^s(\xO)\varsubsetneq \mathbf {H}^{s}_{\mu_0, 0}(\xO)$
	and the embedding $\mathbf{H}^{s}_{\mu_0, 0}(\xO)\hookrightarrow L^q(\xO)$
	is continuous  and compact for any $q\in[1,2_s^*)$ (see \cite{Fra-2009}). Therefore, for any $q \in [1,2_s^*)$, there holds
	\ba \label{alpha=0}
	\| u \|_{L^q(\Omega;|x|^{-\alpha})}\leq c\,  \| u \|_{L^q(\Omega)} \leq c\,\| u \|_{\mu}, \quad \forall u \in \Hm,
	\ea
where $c=c(N,\Omega,s,\mu,\alpha,q)$. \medskip
	
\noindent \textbf{Case 2:}	$0 < \alpha < 2s$. Let $\tilde \alpha\in (\alpha,2s)$ be close enough to $2s$, then by using H\"older inequality and estimate \eqref{est:Hardy1}, we obtain
	 \ba \label{est:xasXa} \BAL
	 \int_{\Omega} |x|^{-\alpha }|u(x)|^q dx &= \int_{\Omega} |x|^{-\alpha}|u(x)|^{\frac{ 2 \alpha}{\tilde \alpha}} |u(x)|^{q-\frac{2 \alpha}{\tilde \alpha}} dx \\
	 &\leq \left( \int_{\Omega} |x|^{-\tilde \alpha}  |u(x)|^2 dx \right)^{\frac{\alpha}{\tilde \alpha}} \left( \int_{\Omega} |u(x)|^{ \frac{\tilde \alpha}{\tilde \alpha-\alpha}(q-\frac{2 \alpha}{\tilde \alpha})}dx \right)^{1-\frac{ \alpha}{\tilde \alpha} } \\
	 &\leq c\,\| u \|_{\mu}^q.
	 \EAL \ea
Here in the second estimate, we have used the inequality that $\frac{\tilde \alpha}{\tilde \alpha-\alpha}(q-\frac{2 \alpha}{\tilde \alpha}) < 2_s^*$ due to the choice that $\tilde \alpha$ is close enough to $2s$ and the assumption   $q<\frac{2N-2\alpha}{N-2s}$.
 Thus $\Hm$ is continuous embedded into $L^q(\Omega;|x|^{-\alpha})$ and (\ref{eq_interpolation}) follows by (\ref{est:xasXa}).

 Let
 $u_n \rightharpoonup 0$ weakly in $\Hm$ as $n\to+\infty$. Then $\{ u_n\}$ is uniformly bounded in $\Hm$ and by the compactness embedding $\Hm \hookrightarrow L^q(\Omega;|x|^{-\alpha})$,
 \bal \int_{\Omega} |u_n(x)|^{ \frac{\tilde \alpha}{\tilde \alpha-\alpha}(q-\frac{2 \alpha}{\tilde \alpha})}dx \to 0 
 \eal
 due to the inequality $\frac{\tilde \alpha}{\tilde \alpha-\alpha}(q-\frac{2 \alpha}{\tilde \alpha}) <2^*_s$ in \eqref{est:xasXa}. Consequently, by \eqref{est:xasXa}, we obtain
\bal
	 \int_{\Omega} |x|^{-\alpha }|u_n(x)|^q dx \to 0\quad{\rm as}\ n\to+\infty.
\eal
Therefore  $\Hm$ is compactly embedded in $L^q(\Omega;|x|^{-\alpha})$.
\end{proof}

\begin{proof}[\textbf{Proof of Theorem \ref{th:main-1}}.]  $(i)$ Statement $(i)$ follows from Proposition \ref{density} and Proposition \ref{density2}.

$(ii)$	 From the property of $\tau_+(s,\mu)$, for any $\xg\in[ \frac{2s-N}{2}, 2s)$, there exists $\mu\geq \mu_0$ such that
	$\tau_+(s,\mu)=\gamma$. Therefore, for any $u, v \in C_0^\infty (\xO\setminus\{0\})$ such that $u=|x|^{\gamma}v\in C_0^\infty (\xO\setminus\{0\})$, we have
	\ba \label{norm-equi}\norm{u}_{\mu}^2 = \frac{C_{N,s}}{2}\norm{v}_{s,\gamma}^2,
	\ea
	which, together with Proposition \ref{density} and Proposition \ref{density2}, implies that
	\bal   \Hsg =\{|x|^{-\gamma}u: u \in \Hm \}. \eal

$(iii)$ For any $v \in \Hsg$, by (ii), $u=|x|^{\gamma}v \in \Hm$ and 
\bal 
\int_{\Omega}|x|^{-\beta + \xg q} |v(x)|^q dx= \int_{\Omega}|x|^{-\beta } |u(x)|^q dx,
\eal
it follows from \eqref{eq_interpolation} that 
\bal
	\big\| |\cdot|^{\gamma} v \big\|_{L^q(\Omega;|x|^{-\beta })} \leq c\, \| v \|_{s,\gamma}, \quad \forall v \in \Hsg
\eal
for 
\bal q<\frac{2N-2 \beta}{N-2s}\quad{\rm if}\ \,  \beta\in(0,2s)\quad  {\rm and}\quad q<2^*_s\quad{\rm if}\ \,  \beta \leq 0,
\eal
where $c=c(N,\Omega,s,\gamma,\beta,q)$.
The proof is complete.
\end{proof}

When $\gamma=\frac{2s-N}{2}$, we show that the exponent $2_s^*$ is involved with a logarithmic correction. 

\begin{corollary}\label{coro_interpolation}
The embedding
\bal
H^{s}_0(\xO;|x|^{\frac{2s-N}{2}}) \hookrightarrow L^{ 2_s^*}(\Omega; |x|^{-N}X(|x|)^{\frac{2(N-s)}{N-2s}})\eal
is continuous, where the function $X$ is defined in \eqref{X}. Moreover, there exists a positive constant $c=c(N,s)$ such that
\ba \label{H-gamma-est-b}
	\|v\|_{L^{2_s^*}(\Omega; |x|^{-N}X(|x|)^{\frac{2(N-s)}{N-2s}})}\leq c\, \| v\|_{s,\frac{2s-N}{2}},  \quad \forall v \in H^{s}_0(\xO;|x|^{\frac{2s-N}{2}}).
\ea
\end{corollary}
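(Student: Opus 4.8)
The plan is to transfer the inequality, via the ground-state substitution, to the space $\mathbf{H}_{\mu_0,0}^s(\Omega)$, and then to prove the corresponding critical improved Hardy--Sobolev inequality, which amounts to upgrading the $L^2$-improvement \eqref{est:Hardy1} to the critical Sobolev exponent. Throughout set $\beta_0:=\frac{2(N-s)}{N-2s}$ and $\gamma:=\frac{2s-N}{2}$. First I would reduce to smooth functions: by Proposition \ref{density} it suffices to prove \eqref{H-gamma-est-b} for $v\in C_0^\infty(\Omega\setminus\{0\})$. For such $v$, put $u:=|x|^{\gamma}v\in C_0^\infty(\Omega\setminus\{0\})$. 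The identity \eqref{normHs} gives $\|u\|_{\mu_0}^2=\tfrac{C_{N,s}}{2}\|v\|_{s,\gamma}^2$, and since $-\gamma\,2_s^*-N=0$ (immediate from the values of $\gamma$ and $2_s^*$) one has $|v|^{2_s^*}|x|^{-N}=|u|^{2_s^*}$, so the target inequality \eqref{H-gamma-est-b} reduces to $\big(\int_\Omega|u|^{2_s^*}X(|x|)^{\beta_0}\,dx\big)^{2/2_s^*}\le c\,\|u\|_{\mu_0}^2$ for all $u\in C_0^\infty(\Omega\setminus\{0\})$.

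This last estimate is the heart of the matter. One should first notice why the naive approach cannot work: the fractional Sobolev inequality controls $\|u\|_{L^{2_s^*}}^2$ only by the Gagliardo seminorm $[u]_{W^{s,2}}^2$, and $[u]_{W^{s,2}}$ is not dominated by $\|u\|_{\mu_0}$ (it blows up along the threshold profiles $|x|^{-(N-2s)/2}X(|x|)^{b}$, $b\downarrow-\tfrac12$), while Lemma \ref{lem_interpolation} and Theorem \ref{th:main-1}$(iii)$ only give subcritical bounds; hence no H\"older interpolation can reach the exponent $2_s^*$, and one is forced to combine the Hardy-improvement \eqref{est:Hardy1} with the Sobolev inequality so that the uncontrolled part of $[u]_{W^{s,2}}$ is cancelled. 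I would do this through the Caffarelli--Silvestre extension: letting $U$ be the $s$-harmonic extension of $u$ to $\BBR^{N+1}_+$, one has $\|u\|_{\mu_0}^2=\tilde c_{N,s}\int_{\BBR^{N+1}_+}t^{1-2s}|\nabla U|^2\,dx\,dt+\mu_0\int_{\BBR^N}|x|^{-2s}u^2\,dx$; the sharp fractional Hardy inequality in the extended variables, together with its logarithmically improved form near the origin (precisely the mechanism behind \eqref{est:Hardy1}), then bounds $\|u\|_{\mu_0}^2$ from below by a degenerate Dirichlet energy of $U$ carrying the weights $t^{1-2s}$, $(|x|^2+t^2)^{-(N-2s)/2}$ and a logarithmic factor.

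Next I would pass to the cylindrical variables $(\ln(1/|x|),\omega,t)\in(c_0,\infty)\times S^{N-1}\times(0,\infty)$, where $X(|x|)\simeq(1+\ln(1/|x|))^{-1}$; there the degenerate energy above controls, by a one-dimensional Bliss-type (weighted Hardy--Sobolev) inequality with the logarithmic weight $(1+\ln(1/|x|))^{-1}$, the quantity $\big(\int_\Omega|u|^{2_s^*}X(|x|)^{\beta_0}\,dx\big)^{2/2_s^*}$. The exponent $\beta_0=\frac{2(N-s)}{N-2s}$ is dictated by this computation: it is exactly the value for which the profiles $|x|^{-(N-2s)/2}X(|x|)^{b}$ with $b\downarrow-\tfrac12$ lie simultaneously on the boundary of membership in $\mathbf{H}_{\mu_0,0}^s(\Omega)$ and in $L^{2_s^*}(\Omega;|x|^{-N}X(|x|)^{\beta_0})$, which both fixes $\beta_0$ and shows it is sharp. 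Combining this with the first step yields \eqref{H-gamma-est-b}, and the stated continuous embedding follows at once.

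I expect the main obstacle to be precisely the passage to the critical exponent. Since interpolation provably fails, the improved Hardy term of \eqref{est:Hardy1} has to be transported correctly through the extension --- tracking its behaviour both at the singularity $|x|=0$ and near $\partial\Omega$ --- and then the one-dimensional weighted Sobolev estimate with the logarithmic weight has to be carried out; by contrast, the density reduction and the bookkeeping of the ground-state substitution are routine.
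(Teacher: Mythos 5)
Your opening reduction --- put $u=|x|^{\frac{2s-N}{2}}v$, invoke the norm identity \eqref{normHs}, and observe that $|v|^{2_s^*}|x|^{-N}=|u|^{2_s^*}$ since $-\gamma 2_s^*-N=0$ --- is exactly the paper's argument, and you correctly identify the target inequality
\bal
\Big(\int_\Omega |u|^{2_s^*}X(|x|)^{\frac{2(N-s)}{N-2s}}\,dx\Big)^{2/2_s^*}\le c\,\|u\|_{\mu_0}^2
\eal
as the whole content of the corollary. You are also right that the subcritical estimates in Lemma \ref{lem_interpolation} and Theorem \ref{th:main-1}$(iii)$ cannot be interpolated up to $2_s^*$, which is a genuine obstruction. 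Where the two proofs diverge is in how this critical improved Hardy--Sobolev inequality is obtained: the paper simply quotes it as \cite[Theorem 3]{tz} (the same reference already used for \eqref{est:Hardy1}), whereas you set out to \emph{reprove} it via the Caffarelli--Silvestre extension, a change to cylindrical variables $\ln(1/|x|)$, and a one-dimensional Bliss-type weighted inequality. That strategy is plausible and broadly parallels the way such sharp logarithmic Hardy--Sobolev inequalities are established in the literature, and your dimensional analysis pinning the exponent $\beta_0=\frac{2(N-s)}{N-2s}$ via the threshold profiles $|x|^{-(N-2s)/2}X(|x|)^b$ is a good sanity check. However, the heavy steps --- transporting the improvement through the extension while tracking behaviour at $|x|=0$ and near $\partial\Omega$, and then proving the one-dimensional logarithmic Sobolev estimate --- are only asserted, not carried out, so as written this is a program rather than a proof. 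In short: your reduction matches the paper exactly; for the remaining step the paper cites Tzirakis and is done, while you propose to rederive Tzirakis's theorem, which is substantially more work and is left incomplete.
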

\begin{proof}[\textbf{Proof}] Recall that  $\tau_+(s,\mu_0)=\frac{2s-N}{2}$. Take $v \in H^{s}_0(\xO;|x|^{\frac{2s-N}{2}})$ and put $u=|x|^{\frac{2s-N}{2}}v$.
Invoking \cite[Theorem 3]{tz}, we obtain
\bal
  \left(\int_{ \Omega}  X(|x|)^{\frac{2(N-s)}{N-2s} } |u(x)|^{2_s^*}dx\right)^{\frac{2}{2_s^*}} \leq c\, \| u \|_{\mu_0}^2,
\eal
where $c=c(N,s)$, which implies  \eqref{H-gamma-est-b}. The proof is complete.
\end{proof}

\section{Dual problems: variational solutions} \label{sec:dual}

We will show below the existence of a solution to the dual problem \eqref{eq:1-1} by using a variational method. The highlight of this section is a variant of Kato's inequality which implies the uniqueness of problem \eqref{eq:1-1}.  

\begin{proof}[\textbf{Proof of Theorem \ref{th:main-2}}] {\it Existence.} Consider the functional
\bal \scI(\varphi):= \frac{1}{2}\norm{\varphi}_{s,\gamma}^2-(f,\varphi)_\gamma, \quad\forall\, \varphi \in \Hsg.
\eal
We first see that $\scI$ is $C^1$ in $\Hsg$.

Since 
\ba \label{exs ex}
p>\max\left\{\frac{2N}{N+2s},\,  \frac{2N+2\alpha}{N+2s},\, 1+\frac{\alpha}{2s}\right\},
\ea
it follows that $p'<\min\big\{\frac{2N}{N-2s},\frac{2N-2\alpha\frac{p'}{p}}{N-2s}\big\}$ and $\alpha\frac{p'}{p}<2s$.
By using H\"older's inequality and Theorem \ref{th:main-1} (iii) with $q$ replaced by $p'$ and $\beta$ replaced by $\alpha\frac{p'}{p}$, we obtain that, for each $\varphi \in \Hsg$, 
\ba \label{varisol:est-2}
|(f,\varphi)_{\gamma}|   \leq   \| f \|_{L^{p}(\Omega;|x|^\alpha)} \big\| |\cdot|^\xg \varphi\big\|_{L^{p'}(\xO; |x|^{-\alpha\frac{p'}{p}})}
 \leq    c\, \| f \|_{L^{p}(\Omega;|x|^\alpha)} \norm{\varphi}_{s,\gamma},
\ea
where $c=c(N,\Omega,s,\gamma,\alpha,p)$.
This implies that $\scI$ is coercive on $\Hsg$.

Next we will show that $\scI$ is weakly lower semicontinuous on $\Hsg$. Let $\{\varphi_n\} \subset \Hsg$ such that  $\varphi_n \rightharpoonup \varphi$ weakly in $\Hsg$. By \eqref{varisol:est-2}, the linear operator $T$ defined by $T(\varphi):=(f,\varphi)_{\gamma}$ belongs to the dual of $\Hsg$. Hence, we have that
\bal
|T(\varphi_n -\varphi)|=|(f,\varphi_n -\varphi)_\gamma| \to 0 \quad \text{as } n \to \infty.
\eal
Next, we see that
\bal
\norm{\varphi_n}_{s,\gamma}^2 - \norm{\varphi}_{s,\gamma}^2 = \norm{\varphi_n-\varphi}_{s,\gamma}^2 + 2\langle \varphi, \varphi_n - \varphi \rangle_{s,\gamma},
\eal
which gives
\bal
\scI(\varphi_n) - \scI(\varphi) = \norm{\varphi_n-\varphi}_{s,\gamma}^2 + 2\langle \varphi, \varphi_n - \varphi \rangle_{s,\gamma} - (f,\varphi_n - \varphi)_{\gamma}.
\eal
This yields
\bal
\liminf_{n \to \infty}\scI(\varphi_n) \geq \scI(\varphi).
\eal		

Therefore $\scI$ has a critical point $u \in \Hsg$. It can be checked that $u$ is a variational solution of \eqref{eq:1-1}.
\medskip

\noindent \textit{Uniqueness.} The uniqueness follows from Kato type inequality \eqref{kato1}. \medskip

\noindent \textit{A priori estimate.} Estimate \eqref{varisol:est-1} can be obtained by taking $\xi=u$ in \eqref{varisol-form} and using estimate \eqref{varisol:est-2} with $\varphi=u$. \medskip

\noindent \textit{Kato type inequality.} The proof is in the spirit of \cite{KKP}.  Assume that  $u \in \Hsg$ is a variational solution of \eqref{eq:1-1}. Let $\xe>0$ and $0\leq\xz\in C_0^\infty(\xO\setminus \{0\})$. Put
\bal \eta_{\varepsilon}= \min\left\{1,\varepsilon^{-1}u^+ \right \} \quad \text{and} \quad \xf=\eta_{\xe}\zeta.
\eal
Note that $\xf \in \Hsg$, hence by taking $\xf$ as a test function in \eqref{varisol-form}, we have that
\ba \label{uetaep-1}
\int_{\BBR^N}\int_{\BBR^N}\frac{(u(x)-u(y))(\eta_{\xe}(x)\xz(x)-\eta_{\xe}(y)\xz(y))}{|x-y|^{N+2s}}|y|^{\xg}dy |x|^{\xg}dx
=\int_{\xO}f(x)\eta_{\xe}(x)\xz(x)|x|^{\xg}dx.
\ea
We will estimate the left-hand side of \eqref{uetaep-1} from below by dividing $\R^N \times \R^N$ into several subsets based on the value of $u$ in comparison with $0$ and $\varepsilon$.

First, since $\eta_{\varepsilon}(x)=0$ when $u(x) \leq 0$, it is easy to see that
\bal
\int_{\{u(x)\leq 0\}}\int_{\{u(y)\leq 0\}}\frac{(u(x)-u(y))(\eta_{\xe}(x)\xz(x)-\eta_{\xe}(y)\xz(y))}{|x-y|^{N+2s}}|y|^{\xg}dy |x|^{\xg}dx
=0.
\eal
Next, since $\eta_{\varepsilon}(x)=0$ when $u(x) \leq 0$ and $\eta_{\varepsilon}(y)=\varepsilon^{-1}u(y)$ when $0 <u(y)<\varepsilon$, we obtain
\ba \nonumber
&\quad \int_{\{u(x)\leq 0\}}\int_{\{0<u(y)<\xe\}}\frac{(u(x)-u(y))(\eta_{\xe}(x)\xz(x)-\eta_{\xe}(y)\xz(y))}{|x-y|^{N+2s}}|y|^{\xg}dy |x|^{\xg}dx\\ \label{aa1}
&=\xe^{-1}\int_{\{u(x)\leq 0\}}\int_{\{0<u(y)<\xe\}}\frac{(u(y)-u(x))u(y)\xz(y)}{|x-y|^{N+2s}}|y|^{\xg}dy |x|^{\xg}dx\geq0.
\ea
We note that $\eta_{\varepsilon}(x)=0$ when $u(x) \leq 0$ and $\eta_{\varepsilon}(y)=1$ when $u(y) \geq \varepsilon$, hence
\ba \nonumber
&\quad \int_{\{u(x)\leq 0\}}\int_{\{u(y)\geq\xe\}}\frac{(u(x)-u(y))(\eta_{\xe}(x)\xz(x)-\eta_{\xe}(y)\xz(y))}{|x-y|^{N+2s}}|y|^{\xg}dy |x|^{\xg}dx\\ \nonumber
&=\int_{\{u(x)\leq 0\}}\int_{\{u(y)\geq\xe\}}\frac{(u(y)-u(x))\xz(y)}{|x-y|^{N+2s}}|y|^{\xg}dy |x|^{\xg}dx\\ \label{aa2}
&\geq \int_{\{u(x)\leq 0\}}\int_{\{u(y)\geq\xe\}}\frac{(u^+(x)-u^+(y))(\xz(x)-\xz(y))}{|x-y|^{N+2s}}|y|^{\xg}dy |x|^{\xg}dx.
\ea
By symmetry, as in \eqref{aa1}, we have
\bal
\int_{\{0<u(x)<\xe\}}\int_{\{u(y)\leq0\}}\frac{(u(x)-u(y))(\eta_{\xe}(x)\xz(x)-\eta_{\xe}(y)\xz(y))}{|x-y|^{N+2s}}|y|^{\xg}dy |x|^{\xg}dx\geq 0.
\eal
Again, by using the fact that $\eta_{\varepsilon}(x)=\varepsilon^{-1}u(x)$ when $0 <u(x)<\varepsilon$, we derive
\bal
&\quad \int_{\{0<u(x)<\xe\}}\int_{\{0<u(y)<\xe\}}\frac{(u(x)-u(y))(\eta_{\xe}(x)\xz(x)-\eta_{\xe}(y)\xz(y))}{|x-y|^{N+2s}}|y|^{\xg}dy |x|^{\xg}dx\\
&=\xe^{-1}\int_{\{0<u(x)<\xe\}}\int_{\{0<u(y)<\xe\}}\frac{(u(x)-u(y))(u(x)\xz(x)-u(y)\xz(y))}{|x-y|^{N+2s}}|y|^{\xg}dy |x|^{\xg}dx\\
&=\xe^{-1}\int_{\{0<u(x)<\xe\}}\int_{\{0<u(y)<\xe\}}\frac{(u(x)-u(y))^2\xz(x)}{|x-y|^{N+2s}}|y|^{\xg}dy |x|^{\xg}dx\\
&\quad +\xe^{-1}\int_{\{0<u(x)<\xe\}}\int_{\{0<u(y)<\xe\}}\frac{(u(x)-u(y))u(y)(\xz(x)-\xz(y))}{|x-y|^{N+2s}}|y|^{\xg}dy |x|^{\xg}dx\\
&\geq \xe^{-1}\int_{\{0<u(x)<\xe\}}\int_{\{0<u(y)<\xe\}}\frac{(u(x)-u(y))u(y)(\xz(x)-\xz(y))}{|x-y|^{N+2s}}|y|^{\xg}dy |x|^{\xg}dx.
\eal
Since $u \in H_0^s(\Omega;|x|^\gamma)$ and $\zeta \in C_0^\infty(\Omega \setminus \{0\})$, by using the dominated convergence theorem, we deduce that
\bal
\lim_{\xe\to0}\xe^{-1}\int_{\{0<u(x)<\xe\}}\int_{\{0<u(y)<\xe\}}\frac{(u(x)-u(y))u(y)(\xz(x)-\xz(y))}{|x-y|^{N+2s}}|y|^{\xg}dy |x|^{\xg}dx=0.
\eal
Next, as $\eta_{\varepsilon}(x)=\varepsilon^{-1}u(x)$ when $0 <u(x)<\varepsilon$ and $\eta_{\varepsilon}(y)=1$ when $u(y) \geq \varepsilon$, it follows that
\ba \nonumber
&\quad \int_{\{0<u(x)<\xe\}}\int_{\{ u(y) \geq \xe\}}\frac{(u(x)-u(y))(\eta_{\xe}(x)\xz(x)-\eta_{\xe}(y)\xz(y))}{|x-y|^{N+2s}}|y|^{\xg}dy |x|^{\xg}dx\\ \nonumber
&=\varepsilon^{-1}\int_{\{0<u(x)<\xe\}}\int_{\{ u(y) \geq \xe\}}\frac{(u(x)-u(y))(u(x)\xz(x)-\varepsilon\xz(y))}{|x-y|^{N+2s}}|y|^{\xg}dy |x|^{\xg}dx\\ \nonumber
&=\varepsilon^{-1}\int_{\{0<u(x)<\xe\}}\int_{\{ u(y) \geq \xe\}}\frac{(u(x)-u(y))(\xz(x)-\xz(y))u(x)}{|x-y|^{N+2s}}|y|^{\xg}dy |x|^{\xg}dx\\ \nonumber
&\quad   +\varepsilon^{-1}\int_{\{0<u(x)<\xe\}}\int_{\{ u(y) \geq \xe\}}\frac{(u(x)-u(y))(u(x)-\xe)\xz(y)}{|x-y|^{N+2s}}|y|^{\xg}dy |x|^{\xg}dx\\ \label{aa3}
&\geq \varepsilon^{-1}\int_{\{0<u(x)<\xe\}}\int_{\{ u(y) \geq \xe\}}\frac{(u(x)-u(y))(\xz(x)-\xz(y))u(x)}{|x-y|^{N+2s}}|y|^{\xg}dy |x|^{\xg}dx.
\ea
Noting that $u \in H_0^s(\Omega;|x|^\gamma)$ and $\zeta \in C_0^\infty(\Omega \setminus \{0\})$, by using the dominated convergence theorem, we obtain
\bal
\lim_{\xe\to0}\varepsilon^{-1}\int_{\{0<u(x)<\xe\}}\int_{\{ u(y) \geq \xe\}}\frac{(u(x)-u(y))(\xz(x)-\xz(y))u(x)}{|x-y|^{N+2s}}|y|^{\xg}dy |x|^{\xg}dx=0.
\eal
By symmetry, proceeding as in \eqref{aa2}, we get
\bal
&\quad \int_{\{u(x)\geq\xe\}}\int_{\{u(y)\leq0\}}\frac{(u(x)-u(y))(\eta_{\xe}(x)\xz(x)-\eta_{\xe}(y)\xz(y))}{|x-y|^{N+2s}}|y|^{\xg}dy |x|^{\xg}dx\\
&\geq \int_{\{u(x)\geq\xe\}}\int_{\{u(y)\leq0\}}\frac{(u^+(x)-u^+(y))(\xz(x)-\xz(y))}{|x-y|^{N+2s}}|y|^{\xg}dy |x|^{\xg}dx
\eal
and, as in \eqref{aa3},
\ba \nonumber
&\quad \int_{\{u(x)\geq\xe\}}\int_{\{0<u(y)<\xe\}}\frac{(u(x)-u(y))(\eta_{\xe}(x)\xz(x)-\eta_{\xe}(y)\xz(y))}{|x-y|^{N+2s}}|y|^{\xg}dy |x|^{\xg}dx \\ \label{aa4}
&\geq \varepsilon^{-1} \int_{\{u(x)\geq\xe\}}\int_{\{0<u(y)<\xe\}}\frac{(u(x)-u(y))(\xz(x)-\xz(y))u(y)}{|x-y|^{N+2s}}|y|^{\xg}dy |x|^{\xg}dx,
\ea
where
\bal
\lim_{\xe\to0}\varepsilon^{-1} \int_{\{u(x)\geq\xe\}}\int_{\{0<u(y)<\xe\}}\frac{(u(x)-u(y))(\xz(x)-\xz(y))u(y)}{|x-y|^{N+2s}}|y|^{\xg}dy |x|^{\xg}dx=0.
\eal

Combining all the above estimates and then letting $\xe\to 0$, we deduce that
\ba \label{kato1-1}
\int_{\BBR^N}\int_{\BBR^N}\frac{(u^+(x)-u^+(y))(\xz(x)-\xz(y))}{|x-y|^{N+2s}}|y|^{\xg}dy |x|^{\xg}dx
\leq\int_{\xO}f(x)\sign^+(u(x))\xz(x)|x|^{\xg}dx
\ea
for any $0\leq \xz\in C^\infty_0(\xO\setminus \{0\})$. By Propostion \ref{density}, we conclude that \eqref{kato1-1} holds true for any $0 \leq \zeta \in H_0^s(\Omega;|x|^{\gamma})$.
The proof is complete.
\end{proof}\medskip

\section{Nonhomogeous linear equations} \label{sec:nonhomogeneous}
\subsection{Existence, uniqueness and a priori estimates} ~  Assume that   $N\geq2$ and $\xO\subset\BBR^N$ is an open set. We denote by $H^s(\xO)$ the Banach space
\bal
H^s(\xO):=\left\{u\in L^2(\xO):\;\int_{\xO}\int_{\xO}\frac{|u(x)-u(y)|^2}{|x-y|^{N+2s}}dydx<\infty\right\}
\eal
endowed with the norm
\bal
\norm{u}_{H^s(\xO)}^2:=\int_{\xO}|u|^2dx+\int_{\xO}\int_{\xO}\frac{|u(x)-u(y)|^2}{|x-y|^{N+2s}}dydx.
\eal

We say that $u\in H^s_{\loc}(\xO)$ if  $u\in H^s(\xO')$ for any domain $\xO' \Subset\xO$.

In the following, unless otherwise stated, we assume that $\xO\subset\BBR^N$ is an open set containing the origin.

\begin{definition}\label{weaksolution}
Let $f \in L_{\loc}^1(\Omega \setminus \{0\})$. A function $u$  is called a \textit{weak solution} of
\ba \label{solutionL2}
\begin{aligned}
\CL_\xm^s u = f\quad \text{in}\;\xO
\end{aligned}
\ea
if $u\in H^s_{\loc}(\xO\setminus\{0\})\cap L^1(\BBR^N;(1+|x|^{N+2s})^{-1})$ and 
\ba \label{def:weaksol0}
\ll u, \phi \gg_{\mu} = \int_{\Omega}f \phi \, dx, \quad \forall \phi \in C_0^\infty(\Omega \setminus \{0\}).
\ea
\end{definition}

The solvability and a priori estimate for solutions of \eqref{solutionL2} in $\Hm$ is provided in the next result.

\begin{proposition}\label{existence}
Let $f\in L^2(\xO;|x|^a)$ for some $a <2s$. Then there exists a unique weak solution $u$ of \eqref{solutionL2} such that $u \in \Hm$. Furthermore, there exists a positive constant $c=c(N,\xO,s,\xm,a)$ such that
\ba\label{estL2}
\| u \|_{\mu} \leq c\,\| f \|_{L^2(\Omega;|x|^a)}.
\ea
\end{proposition}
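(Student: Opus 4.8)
The plan is to solve \eqref{solutionL2} by a direct variational argument in the Hilbert space $\Hm$ (or in its completion when $\mu=\mu_0$), mirroring the structure of the proof of Theorem \ref{th:main-2}. First I would introduce the functional
\bal
\scJ(\varphi):=\tfrac12\norm{\varphi}_\mu^2-\int_\Omega f\varphi\,dx, \quad \varphi\in\Hm,
\eal
and check that it is well defined, $C^1$, coercive and weakly lower semicontinuous. The only nontrivial point here is to control the linear term $\varphi\mapsto\int_\Omega f\varphi\,dx$ by $\norm{\varphi}_\mu$. Since $f\in L^2(\Omega;|x|^a)$ with $a<2s$, H\"older's inequality gives
\bal
\Big|\int_\Omega f\varphi\,dx\Big| = \Big|\int_\Omega \big(f|x|^{a/2}\big)\big(\varphi|x|^{-a/2}\big)\,dx\Big| \le \|f\|_{L^2(\Omega;|x|^a)}\,\|\varphi\|_{L^2(\Omega;|x|^{-a})},
\eal
and then Lemma \ref{lem_interpolation} applied with $q=2$ and $\alpha=a<2s$ (note $2<\min\{2_s^*,\frac{2N-2a}{N-2s}\}$ automatically holds since $a<2s$) yields $\|\varphi\|_{L^2(\Omega;|x|^{-a})}\le c\,\norm{\varphi}_\mu$. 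Hence $\varphi\mapsto\int_\Omega f\varphi\,dx$ is a bounded linear functional on $\Hm$, which gives both the coercivity of $\scJ$ and, together with the weak $L^2(\Omega;|x|^{-a})$-continuity (again from the \emph{compact} embedding in Lemma \ref{lem_interpolation}), the weak lower semicontinuity. A standard argument then produces a minimizer $u\in\Hm$, and the Euler--Lagrange equation reads $\ll u,\phi\gg_\mu=\int_\Omega f\phi\,dx$ for all $\phi\in\Hm$, in particular for all $\phi\in C_0^\infty(\Omega\setminus\{0\})$; this is exactly \eqref{def:weaksol0}.

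Next I would verify that such a minimizer is a weak solution in the sense of Definition \ref{weaksolution}, i.e. that $u\in H^s_{\loc}(\Omega\setminus\{0\})\cap L^1(\R^N;(1+|x|^{N+2s})^{-1})$. The local $H^s$ regularity away from the origin follows because, on any $\Omega'\Subset\Omega\setminus\{0\}$, the weight $|x|^{2s}$ is bounded above and below, so finiteness of $\norm{u}_\mu$ forces $u\in H^s(\Omega')$ (and $u\in L^2(\R^N)$ from the Gagliardo term). The weighted $L^1$ integrability at infinity is immediate since $u$ is supported, up to the boundary behaviour, in the bounded set $\Omega$ (functions in $\Hm$ vanish outside $\Omega$) and $u\in L^2(\Omega)\subset L^1(\R^N;(1+|x|^{N+2s})^{-1})$.

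For the a priori estimate \eqref{estL2}, I would test \eqref{def:weaksol0} with $\phi=u$ — legitimate after extending the identity from $C_0^\infty(\Omega\setminus\{0\})$ to all of $\Hm$ by the density statement in Theorem \ref{th:main-1}(i) (equivalently Proposition \ref{density2}) — to get $\norm{u}_\mu^2=\int_\Omega fu\,dx\le \|f\|_{L^2(\Omega;|x|^a)}\,c\,\norm{u}_\mu$, whence \eqref{estL2} after dividing by $\norm{u}_\mu$. Uniqueness in $\Hm$ follows the same way: if $u_1,u_2$ are two solutions then their difference $w=u_1-u_2\in\Hm$ satisfies $\ll w,\phi\gg_\mu=0$ for all $\phi\in C_0^\infty(\Omega\setminus\{0\})$, hence for all $\phi\in\Hm$ by density; taking $\phi=w$ gives $\norm{w}_\mu=0$, so $w=0$.

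The main obstacle is the handling of the singular Hardy potential in the estimate of the linear term and in the density/testing step: one must not integrate against test functions that touch the origin, and the bound $\|\varphi\|_{L^2(\Omega;|x|^{-a})}\le c\norm{\varphi}_\mu$ genuinely uses the fractional Hardy inequality \eqref{mu_00}–\eqref{est:Hardy1} packaged in Lemma \ref{lem_interpolation}, including the delicate critical case $\mu=\mu_0$ where $\Hm$ is only the abstract completion $\mathcal H_0(\Omega)$ rather than a classical Sobolev space; here one works with representatives in $C_0^\infty(\Omega\setminus\{0\})$ and passes to the limit, which is exactly what Theorem \ref{th:main-1}(i) licenses. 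Everything else is routine calculus of variations.
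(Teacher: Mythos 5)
Your argument is correct, but it takes a genuinely different (more direct) route from the paper. The paper does \emph{not} minimize in $\Hm$ at all: it sets $\gamma=\tau_+$, applies Theorem \ref{th:main-2} with $p=2$, $\alpha=a$ to obtain the unique variational solution $v\in H_0^s(\Omega;|x|^{\tau_+})$ of $\langle v,\zeta\rangle_{s,\tau_+}=(f,\zeta)_{\tau_+}$ together with $\|v\|_{s,\tau_+}\le c\|f\|_{L^2(\Omega;|x|^a)}$, then transports $u:=|x|^{\tau_+}v$ back to $\Hm$ via the isomorphism of Theorem \ref{th:main-1}(ii) and the norm identity \eqref{norm-equi}; uniqueness is likewise imported from the dual problem. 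Your proposal redoes the calculus-of-variations step from scratch directly in $\Hm$, replacing Theorem \ref{th:main-1}(iii) (which underlies Theorem \ref{th:main-2}) by its $\Hm$-side twin, Lemma \ref{lem_interpolation} with $q=2$, $\alpha=a<2s$; the density step then invokes Proposition \ref{density2} rather than Proposition \ref{density}. Through the isometry $\|u\|_\mu^2=\frac{C_{N,s}}{2}\|v\|_{s,\tau_+}^2$ the two arguments are word-for-word conjugate, so neither is stronger; the paper's is more economical because it reuses Theorem \ref{th:main-2}, while yours is more self-contained.

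Two small cautions, both at the critical parameter $\mu=\mu_0$. Your parenthetical remarks that ``finiteness of $\|u\|_\mu$ forces $u\in H^s(\Omega')$'' and ``$u\in L^2(\R^N)$ from the Gagliardo term'' read as if the Gagliardo seminorm of $u$ itself were finite; the paper's own remark after Proposition \ref{density2} shows this can fail when $\mu=\mu_0$. The clean argument is the one you indeed gesture at afterward: write $u=|x|^{\gamma}v$ with $v\in H_0^s(\Omega;|x|^\gamma)$, $\gamma=\tau_+$, note that on $\Omega'\Subset\Omega\setminus\{0\}$ the weight $|x|^\gamma$ is bounded above and below so $v\in H^s(\Omega')$ and hence $u\in H^s(\Omega')$, and deduce $u\in L^2(\Omega)$ from Lemma \ref{lem_interpolation} with $q=2$, $\alpha=0$. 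With that rewording, the verification that the minimizer lies in $H^s_{\loc}(\Omega\setminus\{0\})\cap L^1(\R^N;(1+|x|^{N+2s})^{-1})$ is airtight in all cases $\mu\ge\mu_0$.
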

\begin{proof}[\textbf{Proof}]
\noindent \textit{Existence.} Note that $\tau_+ \in [\frac{2s-N}{2},2s)$. Since $f \in L^2(\Omega; |x|^a)$ with $a<2s$, by Theorem \ref{th:main-2}, there exists a unique variational $v \in H_0^s(\Omega;|x|^{\tau_+})$ of the equation
\ba \label{eq:v-zeta}
\langle v, \zeta \rangle_{s,\tau_+} = (f,\zeta)_{\tau_+}, \quad \forall \zeta \in H_0^s(\Omega;|x|^{\tau_+}).
\ea
Moreover,
\ba \label{est-vf}
\| v \|_{s,\tau_+} \leq c \,\| f\|_{L^2(\Omega;|x|^a)}.
\ea

Put $u:=|x|^{\tau_+}v$ then $u \in \Hm$ due to Theorem \ref{th:main-1} (ii). Let $\phi \in C_0^\infty(\Omega \setminus \{0\})$ and put $\xz:=|x|^{-\tau_+}\xf$. Since $v,\zeta$ satisfy \eqref{eq:v-zeta}, it follows that $u,\phi$ satisfy \eqref{def:weaksol0}, hence $u$ is a weak solution of \eqref{solutionL2}. \medskip

\noindent \textit{Uniqueness.}
Let $\xf\in C_0^\infty(\xO\setminus\{0\})$.   Assume that  $u$ is a weak solution of \eqref{solutionL2} such that $u \in \Hm$. Put $v=|x|^{-\tau_+}u$ then $v \in \Hsg$ by Theorem \ref{th:main-1} (ii). Take $\zeta \in C_0^\infty(\Omega \setminus \{0\})$ and put $\phi=|x|^{\tau_+}\zeta \in C_0^\infty(\Omega \setminus \{0\})$. Since $u,\phi$ satisfy \eqref{def:weaksol0}, it follows that $v,\zeta$ satisfy \eqref{eq:v-zeta}. In light of Theorem \ref{density}, we deduce \eqref{eq:v-zeta}. Since $v$ is the unique variational solution of \eqref{eq:v-zeta}, we find that $u$ is the unique weak solution of \eqref{solutionL2}. \medskip

\noindent \textit{A priori estimate.} Estimate \eqref{estL2} follows from \eqref{est-vf} and \eqref{norm-equi}. 
\end{proof}

Denote
\bal
L^\infty(\xO;|x|^b):=\{u\in L^\infty_{\mathrm{loc}}(\xO): |x|^b u\in L^\infty(\xO)\},
\eal
with the norm
\bal
\norm{u}_{L^\infty(\xO;|x|^b)}=\esssup_{x\in\xO} (|u(x)||x|^b).
\eal

\begin{remark} \label{LinftyL2} We note that, for any $b<2s-\tau_+$, there exists $0\leq a=a(N,b,s,\mu)<2s$ such that $L^\infty(\xO;|x|^b) \subset L^2(\xO;|x|^a)$. Indeed, since
$2b-N < 2s$, we can choose $a=a(N,b,s,\mu) \in [0,2s)$ such that $a>2b-N$. Then for any $f \in L^\infty(\xO;|x|^b)$, we have
\bal
\int_{\Omega} |f(x)|^2 |x|^a dx \leq \| f\|_{L^\infty(\Omega;|x|^b)}^2 \int_{\Omega}|x|^{a-2b}dx =c\, \| f\|_{L^\infty(\Omega;|x|^b)}^2,
\eal
therefore $L^\infty(\xO;|x|^b) \subset L^2(\xO;|x|^a)$. In case $b<\frac{N}{2}$, we can choose $a\leq 0$, hence $L^\infty(\Omega;|x|^a) \subset L^2(\Omega)$.
\end{remark}
\begin{lemma}\label{estLinftylemma}
Let $b<2s-\tp$ and $f\in L^\infty(\xO;|x|^b)$.  Assume that  $u$  is a weak solution of \eqref{solutionL2} such that $u \in \Hm$. Then there exists a positive constant $c=c(N,\xO,s,\xm,b)$ such that
\ba\label{estLinfty}
|u(x)|\leq c\norm{f}_{L^\infty(\xO;|x|^b)}|x|^{\tp}\quad \text{for a.e. } x \in \xO\setminus\{0\}.
\ea
\end{lemma}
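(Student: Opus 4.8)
The plan is to transport the problem into the weighted framework of $(-\Delta)^s_{\tau_+}$ and then run a Stampacchia truncation argument there. Write $\gamma=\tau_+=\tau_+(s,\mu)$ and set $v:=|x|^{-\gamma}u$. By Remark~\ref{LinftyL2} we have $f\in L^2(\Omega;|x|^a)$ for some $a\in[0,2s)$, so Proposition~\ref{existence} applies: the weak solution of \eqref{solutionL2} in $\mathbf H^s_{\mu,0}(\Omega)$ is unique and is given by $|x|^{\tau_+}\tilde v$ where $\tilde v\in H^s_0(\Omega;|x|^\gamma)$ is the variational solution of $(-\Delta)^s_\gamma\tilde v=f$; hence $v=|x|^{-\gamma}u$ is this variational solution, i.e. $\langle v,\xi\rangle_{s,\gamma}=(f,\xi)_\gamma$ for all $\xi\in H^s_0(\Omega;|x|^\gamma)$. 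Since $|u(x)|=|x|^{\gamma}|v(x)|$, the estimate \eqref{estLinfty} is equivalent to $\|v\|_{L^\infty(\Omega)}\le c\,\|f\|_{L^\infty(\Omega;|x|^b)}$, which is what I will prove.

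Put $M:=\|f\|_{L^\infty(\Omega;|x|^b)}$, so $|f(x)|\le M|x|^{-b}$ a.e., and for $k\ge0$ let $A_k:=\{x\in\Omega:v(x)>k\}$. Testing the variational identity with $\xi=(v-k)^+\in H^s_0(\Omega;|x|^\gamma)$ (which lies in the space by the contraction property of truncation together with Proposition~\ref{density}) and using the pointwise bound $(a-b)\big((a-k)^+-(b-k)^+\big)\ge\big((a-k)^+-(b-k)^+\big)^2$ gives
\[
\|(v-k)^+\|_{s,\gamma}^2\le (f,(v-k)^+)_\gamma\le M\int_{A_k}(v-k)^+|x|^{\gamma-b}\,dx .
\]
The decisive move is to fix an exponent $q>2$ (to be chosen close to $2$) and set $\beta:=\gamma(q-1)+b$; this choice is engineered so that $q\gamma-\beta=\gamma-b$ and $(\beta/q-b)q'=\gamma-b$ \emph{simultaneously}. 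Applying H\"older's inequality with exponents $q,q'$ to the last integral and then Theorem~\ref{th:main-1}(iii) (with this pair $(q,\beta)$) yields, with the weighted measure $d\omega:=|x|^{\gamma-b}\,dx$,
\[
\|(v-k)^+\|_{s,\gamma}\le c\,M\,\omega(A_k)^{1/q'} .
\]
Here one checks the admissibility of $(q,\beta)$ in Theorem~\ref{th:main-1}(iii): at $q=2$ one has $\beta=\gamma+b$, and both $\beta<2s$ and $q<\min\{2_s^*,\tfrac{2N-2\beta}{N-2s}\}$ reduce to $\gamma+b<2s$, which holds strictly because $b<2s-\gamma$; by continuity these conditions persist for some $q>2$. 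Moreover $\omega$ is a finite measure on $\Omega$ since $\gamma-b>2\gamma-2s\ge-N$, the second inequality being precisely $\gamma\ge\frac{2s-N}{2}$.

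Next, for $h>k\ge0$ one has $(v-k)^+>h-k$ on $A_h$, and, using $q\gamma-\beta=\gamma-b$ so that the relevant weighted $L^q$–norm is controlled by $\omega$, Theorem~\ref{th:main-1}(iii) gives
\[
(h-k)^q\,\omega(A_h)\le\int_\Omega\big((v-k)^+\big)^q\,d\omega\le c^q M^q\,\omega(A_k)^{q-1},
\qquad h>k\ge0 .
\]
Since $q-1>1$, the classical Stampacchia iteration lemma applied to the non-increasing function $k\mapsto\omega(A_k)$ (which is finite at $k=0$ since $\omega(A_0)\le\omega(\Omega)<\infty$) yields $\omega(A_d)=0$ for an explicit level $d=c\,M$, the constant $c$ depending only on $N,\Omega,s,\mu,b$ (through $\omega(\Omega)=\int_\Omega|x|^{\gamma-b}dx$, the embedding constant, and $q$). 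As $|x|^{\gamma-b}>0$ for $x\ne0$, this forces $v\le d$ a.e. in $\Omega$. Replacing $(u,v,f)$ by $(-u,-v,-f)$ — which changes neither the $L^\infty(\Omega;|x|^b)$–norm nor the structure of the equation — gives $-v\le d$ a.e.; hence $|v|\le c\,M$ a.e., and multiplying by $|x|^{\gamma}$ produces \eqref{estLinfty}.

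The main obstacle, beyond routine bookkeeping, is the simultaneous balancing: one needs an admissible pair $(q,\beta)$ in Theorem~\ref{th:main-1}(iii) that makes the two weight exponents coincide (so a single measure $\omega$ drives both ends of the iteration) \emph{and} keeps the Stampacchia exponent $q-1$ strictly above $1$. The choice $\beta=\gamma(q-1)+b$ with $q$ slightly above $2$ does this precisely because the borderline admissibility and integrability conditions all collapse to $\gamma+b<2s$ and $\gamma\ge\frac{2s-N}{2}$, which are guaranteed — but only barely — by the hypotheses $b<2s-\tau_+$ and $\gamma=\tau_+\ge\frac{2s-N}{2}$.
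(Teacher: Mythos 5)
Your proof is correct, but the route is genuinely different from the paper's. The paper argues by comparison with an explicit barrier: using the identity $\CL_\mu^s(|x|^{\tau_+(s,\mu+\eps)})=-\eps\,|x|^{\tau_+(s,\mu+\eps)-2s}$ from \cite[Proposition 1.2]{CW}, the authors build a supersolution $\psi(x)=2t_0(2R_0+1)^{\tau_+(s,\mu+\eps)-\tau_+}|x|^{\tau_+}-t_0\eta_{R_0}(x)|x|^{\tau_+(s,\mu+\eps)}$ (with $\eps$ chosen small so that $2s-\tau_+(s,\mu+\eps)-b>0$ and a suitable cut-off $\eta_{R_0}$ absorbing the tail error), pass to the weighted picture $\overline\psi=|x|^{-\tau_+}\psi$, and conclude $v\le\overline\psi$ via the maximum principle in $H_0^s(\Omega;|x|^{\tau_+})$ after testing the difference against its positive part. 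You instead run a Stampacchia/De Giorgi level-set iteration directly in $H_0^s(\Omega;|x|^{\tau_+})$: testing against $(v-k)^+$, invoking Theorem~\ref{th:main-1}(iii) with the pair $(q,\beta)$ tied by $\beta=\gamma(q-1)+b$ so that the two weight exponents both collapse to $\gamma-b$, and iterating in the level variable. I checked the bookkeeping: the algebra $q\gamma-\beta=(\beta/q-b)q'=\gamma-b$ is right; the admissibility of $(q,\beta)$ for $q$ slightly above $2$ reduces exactly to $\gamma+b<2s$ and $q<2_s^*$, both strict by hypothesis; the finiteness of $\omega$ follows from $\gamma-b>2\gamma-2s\geq -N$ with the first inequality strict; the truncation inequality and the membership $(v-k)^+\in H_0^s(\Omega;|x|^{\gamma})$ (via Proposition~\ref{density} and the contraction property) are standard; and $q-1>1$ gives the superlinear Stampacchia recursion. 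The trade-off: the paper's proof is constructive and self-explanatory (it also produces an explicit comparison function used elsewhere), but it leans on the closed formula for the fractional Hardy action on power-type functions; yours is a more portable variational argument whose only structural input is the embedding in Theorem~\ref{th:main-1}(iii), at the cost of being slightly more technical at the borderline weight. Both are valid derivations of \eqref{estLinfty}.
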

\begin{proof}[\textbf{Proof}] In this proof, we will modify $\mu$, hence we will use the notation $\tp$ to avoid confusion.
	
Since $u$  is a solution of \eqref{solutionL2} such that $u\in \Hm$, by putting $v=|x|^{-\tp}u$ then $v \in H^{s}_0(\xO;|x|^{\tp})$ and, as in the proof of the uniqueness part of Lemma \ref{existence}, it satisfies
\ba \label{veq-a}
\langle v,\zeta \rangle_{s,\tp} = (f,\zeta)_{\tp}, \quad \forall \xz\in H^{s}_0(\xO;|x|^{\tp}).
\ea

Next, we construct an upper bound for $v$, which will yield an upper bound for $u$. Let $\varepsilon>0$ small which will be determined later. By \cite[Proposition 1.2]{CW}, we have
\ba \label{xepsilon}
\CL_\mu^s(|x|^{\tau_+(s,\xm+\xe)})=-\xe|x|^{\tau_+(s,\xm+\xe)-2s}\leq 0\quad\text{in}\;\xO  \setminus \{0\}.
\ea

Let $\eta \in C^\infty(\R^N)$ such that $0\leq\eta\leq1$, $\eta(x)=1$ if $|x|\leq 1$ and $\eta(x)=0$ if $|x|\geq 2$. For $R>1$, set $\eta_R(x)=\eta(\frac{x}{R})$. Then from \eqref{xepsilon}, we derive
\bal 
\CL_\mu^s(\eta_R|x|^{\tau_+(s,\xm+\xe)})
&=-\xe\eta_R|x|^{\tau_+(s,\xm+\xe)-2s}+
\int_{\BBR^N}\frac{|y|^{\tau_+(s,\xm+\xe)}(\eta_R(x)-\eta_R(y))}{|x-y|^{N+2s}}dy\\ 
&=:-\xe\eta_R|x|^{\tau_+(s,\xm+\xe)-2s}+f_1(x).
\eal
We see that
\bal 
|f_1(x)|\leq c\, R^{-2s+\tau_+(s,\xm+\xe)},\quad\forall x\in\xO\;\;\text{and}\;\;\forall R>4\max\{\diam(\xO),1\}.
\eal
Therefore there exists $R_0=R_0(\xO,s,\xm,\xe)>4\max\{\diam(\xO),1\}$ such that
\ba \label{etax-2}
\CL_\mu^s(\eta_{R_0}|x|^{\tau_+(s,\xm+\xe)}) = -\xe\eta_{R_0}|x|^{\tau_+(s,\xm+\xe)-2s}+f_1(x)\leq -\frac{\xe}{2} |x|^{\tau_+(s,\xm+\xe)-2s},\quad\forall x\in \xO\setminus\{0\}.
\ea
Set $t_0:=2\varepsilon^{-1}\norm{f}_{L^\infty(\xO;|x|^b)}\diam(\xO)^{2s-\tau_+(s,\xm+\xe)-b}$ then from \eqref{etax-2}, we get
\ba \label{etax-3}
\CL_\mu^s(t_0 \eta_R|x|^{\tau_+(s,\xm+\xe)})\leq
 -\norm{f}_{L^\infty(\xO;|x|^b)}\bigg(\frac{\diam(\xO)}{|x|}\bigg)^{2s-\tau_+(s,\xm+\xe)-b}|x|^{-b}, \quad \forall x\in\xO\setminus\{0\}.
\ea
We choose $\xe>0$ small such that $2s-\tau_+(s,\xm+\xe)-b>0$. Set
\bal
\psi(x):=2t_0 (2R_0+1)^{\tau_+(s,\xm+\xe)-\tp}|x|^\tp- t_0\eta_{R_0}(x)|x|^{\tau_+(s,\xm+\xe)},\quad x\in\BBR^N\setminus\{0\}.
\eal
Then by using the definition of $\eta_{R_0}$ and $t_0$, we see that
\bal
 t_0(2R_0+1)^{\tau_+(s,\xm+\xe)-\tp} |x|^\tp \leq \psi(x)\leq  2 t_0(2R_0+1)^{\tau_+(s,\xm+\xe)-\tp} |x|^\tp,
\eal
for any $x\in\BBR^N\setminus\{0\}$.
Moreover, by \eqref{etax-3} and the fact that $\CL_\mu^s(|x|^{\tp})=0$ in $\Omega \setminus \{0\}$, we have
\bal
\CL_\mu^s\psi = -\CL_\mu^s (t_0\eta_R|x|^{\tau_+(s,\xm+\xe)}) \geq
\norm{f}_{L^\infty(\xO;|x|^b)}\bigg(\frac{\diam(\xO)}{|x|}\bigg)^{2s-\tau_+(s,\xm+\xe)-b}|x|^{-b}, \quad \forall x\in\xO\setminus\{0\}.
\eal
This implies
\ba \label{psieq-a}
\ll \psi,\phi \gg_{\mu} \geq \norm{f}_{L^\infty(\xO;|x|^b)}\int_{\xO}\bigg(\frac{\diam(\xO)}{|x|}\bigg)^{2s-\tau_+(s,\xm+\xe)-b}|x|^{-b}\xf(x)dx,\quad\forall0\leq\xf\in C_0^\infty(\xO\setminus\{0\}).
\ea
Set $\overline \psi=|x|^{-\tp}\psi$ and for $0\leq\xf\in C_0^\infty(\xO\setminus\{0\})$, set $\zeta=|x|^{-\tp}\phi$. We deduce from \eqref{psieq-a} that
\ba \label{tildepsieq-a}
\langle \overline \psi, \zeta \rangle_{s,\tp}\geq  \norm{f}_{L^\infty(\xO;|x|^b)}\int_{\xO}\bigg(\frac{\diam(\xO)}{|x|}\bigg)^{2s-\tau_+(s,\xm+\xe)-b}\xz(x)|x|^{-b}|x|^\tp dx.
\ea
Subtracting term by term \eqref{veq-a} and \eqref{tildepsieq-a} yields
\ba\label{7}
\langle  v- \overline \psi, \zeta \rangle_{s,\tp}
\leq 0, \quad \forall \,  0 \leq \xz  \in C_0^\infty(\xO\setminus\{0\}).
\ea
Note that $v \in H_0^s(\Omega;|x|^{\tp})$ and $\norm{\overline{\psi}}_{s,\tp}<+\infty$, hence from  Proposition  \ref{density}, \eqref{7} is valid for any $\xz\in H^{s}_0(\xO;|x|^{\tau_+(s,\xm)})$. In addition, using again Proposition \ref{density}, we may show that $(v-\overline{\psi})^+\in H^{s}_0(\xO;|x|^{\tau_+(s,\xm)})$. By taking $\zeta=(v-\overline{\psi})^+$ in \eqref{7}, we obtain
\bal
\langle  v- \overline \psi, (v- \overline \psi)^+ \rangle_{s,\tp}
\leq 0.
\eal
This implies $(v-\overline{\psi})^+=0$ a.e. in $\BBR^N$, i.e. $v\leq \overline \psi$ a.e. in $\BBR^N$. Therefore
\bal
u(x)\leq c\norm{f}_{L^\infty(\xO;|x|^b)}|x|^{\tp}\quad \;\text{for a.e. } x \in \xO\setminus\{0\},
\eal
where $c=c(N,\Omega,s,\mu,b)$.

Similarly, for $-u$ in place of $u$, we obtain the lower bound, i.e.
\bal
-u(x)\leq c\norm{-f}_{L^\infty(\xO;|x|^b)}|x|^{\tp}\quad \;\text{for a.e. } x \in \xO\setminus\{0\}.
\eal
Combining the above estimates yields \eqref{estLinfty}. The proof is complete.
\end{proof}

\subsection{Regularity}

In the sequel, we assume that $\xO \subset \R^N$ ($N \geq 2$) is an open bounded set containing the origin. Recall that $d(x)=\dist(x,\partial\xO)$.

\begin{lemma}\label{smoothlemma}
Let $b<2s-\tau_+$ and  $f \in L^\infty(\xO;|x|^b)\cap L^2(\xO)$.  Assume that  $u$ is a weak solution of \eqref{solutionL2} such that $u\in \Hm$. Then the following regularity properties hold.

$(i)$ $u\in C^{\xb}_{\loc}(\xO\setminus\{0\})$ for any $\xb\in (0,2s)$. In addition, for any open set $D\Subset \xO\setminus\{0\},$ there exists a positive constant $c$ depending only on $N,\xO,s,\xm, \xb$ and $\dist(D,\partial\xO\cup\{0\})$ such that
\ba\label{smoothest1b}
\norm{u}_{C^\xb(\overline{D})}\leq c\norm{f}_{L^\infty(\xO;|x|^b)}.
\ea

$(ii)$ We additionally assume that $\xO$ satisfies the exterior ball condition. Then for any $\xe>0$ such that $B_{4\xe }(0)\subset \xO$ there exists $c=c(N,\xO,s,\xm,\xb,b,\xe)$ such that
\ba\label{smoothest1}
\norm{d^{-s}u}_{L^\infty(\xO\setminus B_\xe(0))}\leq c\norm{f}_{L^\infty(\xO;|x|^b)}.
\ea

 $(iii)$ In addition, if $\xO$ is $C^{1,1}$ then there exist $0<\alpha<\min\{s,1-s\}$ and $c>0$ depending on  $N,\Omega,s,\mu,b,\varepsilon$  such that
\ba\label{smoothest2}
\norm{d^{-s}u}_{C^\xa(\overline{\xO\setminus B_\xe(0)})}+\norm{u}_{C^s(\overline{\xO\setminus B_{\xe}(0)})}
\leq c\norm{f}_{L^\infty(\xO;|x|^b)}.
\ea

\end{lemma}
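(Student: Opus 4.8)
The plan is to remove the Hardy weight away from the origin and reduce everything to the interior and boundary regularity theory for the fractional Laplacian. First we test the weak formulation $\ll u,\phi\gg_{\mu}=\int_\xO f\phi\,dx$ against $\phi\in C_0^\infty(\xO\setminus\{0\})$; recalling the definition of $\ll\cdot,\cdot\gg_{\mu}$, this shows that $u$ solves $(-\Delta)^s u=g$ in the distributional sense in $\xO\setminus\{0\}$, where $g:=f-\mu|x|^{-2s}u$. By Lemma \ref{estLinftylemma} we have $|u(x)|\leq c\,\|f\|_{L^\infty(\xO;|x|^b)}\,|x|^{\tau_+}$ for a.e.\ $x$, and since $u$ is supported in $\xO$ and $\tau_+>-N$, this gives at once $u\in L^\infty_{\loc}(\xO\setminus\{0\})$ and $u\in L^1(\R^N;(1+|x|^{N+2s})^{-1})$, with the relevant norms bounded by $c\,\|f\|_{L^\infty(\xO;|x|^b)}$. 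Because $|x|^{-2s}$ is bounded on every $D'\Subset\xO\setminus\{0\}$, it will follow that $g\in L^\infty_{\loc}(\xO\setminus\{0\})$ with $\|g\|_{L^\infty(D')}\leq c(D')\,\|f\|_{L^\infty(\xO;|x|^b)}$.

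For $(i)$, I would fix $D\Subset\xO\setminus\{0\}$, cover $\overline D$ by finitely many balls $B$ with $2B\Subset\xO\setminus\{0\}$, and on each $B$ invoke the interior estimate for $(-\Delta)^s$ with bounded right-hand side, which bounds $\|u\|_{C^\beta(\overline B)}$, for every $\beta\in(0,2s)$, by $c\big(\|u\|_{L^\infty(2B)}+\|u\|_{L^1(\R^N;(1+|x|^{N+2s})^{-1})}+\|g\|_{L^\infty(2B)}\big)$; by the previous paragraph the right-hand side is $\leq c\,\|f\|_{L^\infty(\xO;|x|^b)}$ with $c$ depending only on $N,\xO,s,\mu,\beta$ and $\dist(D,\partial\xO\cup\{0\})$, and summing over the covering gives \eqref{smoothest1b}.

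For $(ii)$, fix $\xe>0$ with $B_{4\xe}(0)\subset\xO$; then $\dist(0,\partial\xO)\geq4\xe$, so the boundary strip $\{x\in\xO:d(x)<2\xe\}$ lies in $\xO\setminus B_{2\xe}(0)$. On $\xO\setminus B_{\xe/2}(0)$ the quantity $|x|$ is bounded above and below, so Lemma \ref{estLinftylemma} bounds the $L^\infty$ norms of $u$ and of $g$ there by $c(\xe)\,\|f\|_{L^\infty(\xO;|x|^b)}$. On the strip $u=0$ in $\R^N\setminus\xO$ and $(-\Delta)^s u=g\in L^\infty$, so using the exterior ball condition I would build the classical fractional barrier behaving like $d(x)^s$ near $\partial\xO$ and compare $\pm u$ against $c\,\|f\|_{L^\infty(\xO;|x|^b)}$ times it (using $u=0$ outside $\xO$ and the interior bound on $\{d=\text{const}\}$), which gives $|u(x)|\leq c\,\|f\|_{L^\infty(\xO;|x|^b)}\,d(x)^s$ for $d(x)$ small; on the remaining part of $\xO\setminus B_\xe(0)$ both $d^{-s}$ and $|u|$ are already bounded by $c(\xe)\,\|f\|_{L^\infty(\xO;|x|^b)}$, and \eqref{smoothest1} follows.

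For $(iii)$, assuming in addition that $\xO$ is $C^{1,1}$, I would cover the compact set $\overline{\xO\setminus B_\xe(0)}$ by interior balls staying away from $0$ and $\partial\xO$ and by relative neighborhoods of boundary points: on the interior balls one uses the estimate of $(i)$, and near each boundary point the local fractional boundary regularity theory for $(-\Delta)^s u=g$ with $g\in L^\infty$ supplies some $\alpha\in(0,\min\{s,1-s\})$, independent of the point, with $\|u/d^s\|_{C^\alpha}\leq c\,\|g\|_{L^\infty}$ and $\|u\|_{C^s}\leq c\,\|g\|_{L^\infty}$ on the corresponding neighborhood, where $\|g\|_{L^\infty}\leq c\,\|f\|_{L^\infty(\xO;|x|^b)}$; patching these finitely many local estimates yields \eqref{smoothest2}. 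The main obstacle throughout is the bookkeeping forced by nonlocality: each local regularity statement for $(-\Delta)^s$ needs control of $u$ over all of $\R^N$ through the tail term, and one must keep every constant expressed in terms of $\|f\|_{L^\infty(\xO;|x|^b)}$ alone (this is precisely the role of Lemma \ref{estLinftylemma}), while running the barrier and comparison arguments on regions strictly separated from the origin so that the Hardy weight $|x|^{-2s}$ enters only as a bounded coefficient absorbed into $g$.
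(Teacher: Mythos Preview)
Your approach to (i) is essentially the paper's: you both rewrite the equation as $(-\Delta)^s u=g$ with $g=f-\mu|x|^{-2s}u$, invoke Lemma \ref{estLinftylemma} to bound $u$ and the tail, and then apply the interior estimates of \cite{RS}. The only cosmetic difference is that the paper first mollifies $u$ and passes to the limit via Arzel\`a--Ascoli, whereas you apply the interior estimate directly; either route works.

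For (ii) and (iii), however, there is a genuine gap. When $\mu<0$ one has $\tau_+<0$, so Lemma \ref{estLinftylemma} only gives $|u(x)|\leq c\|f\|_{L^\infty(\Omega;|x|^b)}|x|^{\tau_+}$, which blows up at the origin; in particular $u\notin L^\infty(\Omega)$ and $g\notin L^\infty(\Omega)$. Your barrier sketch (``using $u=0$ outside $\Omega$ and the interior bound on $\{d=\text{const}\}$'') reads like a local comparison, but the nonlocal maximum principle requires the barrier to dominate $u$ on the \emph{entire} complement of the comparison region, including a neighbourhood of the origin where $u$ may be arbitrarily large. Likewise, the boundary results you cite from \cite{RS} (their Lemma 2.7 and Theorem 1.2) are stated for solutions that are bounded in $\Omega$ with bounded right-hand side in all of $\Omega$, hypotheses that fail here.

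The paper circumvents this by multiplying $u$ by a smooth cutoff $\eta_\varepsilon$ that vanishes on $B_{\varepsilon/2}(0)$ and equals $1$ outside $B_\varepsilon(0)$. Then $\eta_\varepsilon u$ is globally bounded, equals $u$ on $\Omega\setminus B_\varepsilon(0)$, and satisfies $(-\Delta)^s(\eta_\varepsilon u)=\eta_\varepsilon g+\varphi$ where $\varphi$ is the commutator $\varphi(x)=C_{N,s}\,\mathrm{p.v.}\!\int\frac{(\eta_\varepsilon(x)-\eta_\varepsilon(y))u(y)}{|x-y|^{N+2s}}\,dy$. The key point is that $\varphi$ is bounded on $\Omega\setminus B_{\varepsilon/2}(0)$: this uses both the pointwise bound on $u$ from Lemma \ref{estLinftylemma} (for the far-field part) \emph{and} the $C^\beta$ regularity of $u$ from step (i) (for the near-diagonal part). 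With $\eta_\varepsilon u$ bounded and its right-hand side bounded, \cite[Lemma 2.7]{RS} and \cite[Theorem 1.2]{RS} now apply directly. Your outline is missing this cutoff/commutator step, and without it the argument does not close in the range $\mu_0\leq\mu<0$.
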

\begin{proof}[\textbf{Proof}] (i) Since $u$ is a weak solution of \eqref{solutionL2}, we have
\bal
\ll u,\phi \gg_{\mu} = \int_{\Omega} f \phi dx, \quad \forall \xf\in C_0^\infty(\xO\setminus\{0\}).
\eal
Let $x_0\in \xO\setminus \{0\},$ then there exists $\xr>0$ such that $B_{2\xr}(x_0)\subset \xO\setminus \{0\}$. Let $\{\xz_\xd\}$ be the sequence of standard mollifiers, then for $\xd>0$ small enough, the function $u_\xd=\xz_\xd*u$ satisfies
\ba \label{eq-udelta}
(-\xD)^su_{\xd}=H_\xd \quad \text{in } B_{\xr}(x_0),
\ea
where 
\ba \label{Hdelta} H_\xd:=\xz_\xd*\left(f-\mu \frac{u}{|x|^{2s}}\right).
\ea
By \eqref{estLinfty}, we have that $u_\xd\in C^\infty(\BBR^N)$ and there exists a positive constant $c$ depending on $N,s,\xm,B_{\xr}(x_0),\xO,b$ such that
\bal 
|H_\xd(x)| +|u_\xd(x)|\leq c\norm{f}_{L^\infty(\xO;|x|^b)}, \quad \forall x \in B_\rho(x_0).
\eal
Furthermore, by \eqref{estLinfty}, we deduce that
\bal 
\int_{\BBR^N}\frac{|u_\xd(x)|}{1+|x|^{N+2s}}dx\leq c\norm{f}_{L^\infty(\xO;|x|^b)}.
\eal
Hence, by \cite[Corollary 2.5]{RS}, for any $\xb\in(0,2s)$, there holds
\ba \label{smooth-0}
\norm{u_{\xd}}_{C^\xb(\overline{B_{\frac{\xr}{4}}}(x_0))}\leq c\norm{f}_{L^\infty(\xO;|x|^b)},
\ea
where the constant $c>0$ depends only on $N,s,\xm, x_0,\xO,\xb,b.$  By the Arzel\'a-Ascoli theorem, there exists a sequence $\{\xd_n\}$ converging to $0$ such that $u_{\xd_n}\to u$ in $C^{\xb}(\overline{B_{\frac{\xr}{16}}}(x_0)),$ hence
\ba\label{estsmooth}\BAL
\norm{u}_{C^\xb(\overline{B_{\frac{\xr}{16}}}(x_0))}&\leq c\norm{f}_{L^\infty(\xO;|x|^b)}
\EAL
\ea
for any $\xb\in (0,2s)$. By the standard covering argument, we derive $u \in C_{\loc}^{\beta}(\Omega \setminus \{0\})$ and estimate \eqref{smoothest1b}.

(ii) Let $0\leq\eta\leq1$ be a smooth function such that $\eta=0$ for any $|t|\leq \frac{1}{2}$ and $\eta=1$ for any $|t|\geq 1.$ Set $\eta_{\xe}(x)=\eta(\xe^{-1} |x|)$ for any $\xe>0.$ If $\xe$ is small enough such that $B_{4\xe}(0)\subset \xO,$ we can easily show that the function $\eta_{\varepsilon}u$ satisfies
\bal
\ll \eta_{\varepsilon}u, \phi \gg_{\mu} = \int_{\Omega} \eta_{\varepsilon}f \phi dx + \int_{\Omega}\varphi \phi dx,\quad\forall \xf\in C_0^\infty(\xO\setminus B_{\frac{\xe}{2}}(0)),
\eal
where
\bal
\varphi(x):=C_{N,s}\lim_{\xd\to0}\int_{\BBR^N\setminus B_\xd(x)}\frac{(\eta_\xe(x)-\eta_\xe(y))u(y)}{|x-y|^{N+2s}}dy,\quad\forall x\in\xO\setminus B_{\frac{\xe}{2}}(0).
\eal

 Now, for any $x\in \xO\setminus B_{\frac{\xe}{2}}(0)$, we have
 \bal
 \varphi(x)&= C_{N,s}\lim_{\xd\to0}\int_{\BBR^N\setminus B_\xd(x)}\frac{(\eta_\xe(x)-\eta_\xe(y))(u(y)-u(x))}{|x-y|^{N+2s}}dy\\
 &\quad +u(x)C_{N,s}\lim_{\xd\to0}\int_{\BBR^N\setminus B_\xd(x)}\frac{\eta_\xe(x)-\eta_\xe(y)}{|x-y|^{N+2s}}dy.
 \eal
By \eqref{estLinfty} and \eqref{estsmooth}, we may show that
\bal
|\varphi(x)| \leq c\norm{f}_{L^\infty(\xO;|x|^b)}, \quad \forall x \in \xO\setminus B_{\frac{\xe}{2}}(0).
\eal
Set \bal h(x):=f(x)\eta_\xe(x)-\xm\frac{u\eta_\xe}{|x|^{2s}}+ \varphi(x),
\eal
then there exists a positive constant $c=c(N,\xO,s,\xm,\xe)$ such that
\ba \label{h-1}
|h(x)|\leq  c\norm{f}_{L^\infty(\xO;|x|^b)},\quad\forall x\in\xO\setminus B_{\frac{\xe}{2}}(0).
\ea
Using \cite[Lemma 2.7]{RS},  taking into account that $\Omega$ satisfies the exterior ball condition (which is needed to apply \cite[Lemma 2.7]{RS})  and \eqref{h-1}, we obtain \eqref{smoothest1}.

(iii) By \cite[Proposition 1.1 and Theorem 1.2]{RS}, the assumption that $\Omega$ is a $C^{1,1}$ bounded domain (which is required to apply \cite[Theorem 1.2]{RS})  and \eqref{h-1}, we derive \eqref{smoothest2}.
\end{proof}

The next result provides a higher  H\"older regularity of weak solutions to equation \eqref{solutionL2}.

\begin{lemma}\label{smoothlemma2}
Let $b<2s-\tau_+$, $\theta\in(0,1)$ and $f \in L^\infty(\xO;|x|^b)\cap C^\theta_{\loc}(\xO\setminus\{0\})$.  Assume that  $u$ is a weak solution of \eqref{solutionL2} such that $u \in \Hm$. Then $u\in C^{\xb_0+2s}_{\loc}(\xO\setminus\{0\})$ for some $\xb_0>0.$ Furthermore for any open set $D\Subset \xO\setminus\{0\},$ there exists a positive constant $c$ depending only on $N,s,\xO,\xm,\xb_0$ and $\dist(D,\partial\xO\cup\{0\})$ such that
\ba\label{smoothest3}
\norm{u}_{C^{2s+\xb_0}(\overline{D})}\leq c (\norm{f}_{L^\infty(\xO;|x|^b)}+\norm{f}_{C^{\theta}(\overline{D})}).
\ea
\end{lemma}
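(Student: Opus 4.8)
The plan is to adapt the argument of Lemma \ref{smoothlemma}(i): localise away from the origin, mollify $u$, move the Hardy potential to the right-hand side, and then apply an interior \emph{Schauder} estimate for the fractional Laplacian (which gains $2s$ derivatives on H\"older data) instead of the $L^\infty$-to-$C^\beta$ estimate used there. Fix $D\Subset\xO\setminus\{0\}$ and choose nested open sets $D\Subset D''\Subset D'\Subset\xO\setminus\{0\}$, so that in particular $0\notin\overline{D'}$. With $\{\xz_\xd\}$ the standard mollifiers, for $\xd$ small the function $u_\xd:=\xz_\xd\ast u$ is smooth and, exactly as in \eqref{eq-udelta}--\eqref{Hdelta}, satisfies $(-\xD)^su_\xd=H_\xd$ in $D''$, where $H_\xd:=\xz_\xd\ast\big(f-\mu\,u|x|^{-2s}\big)$.

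The next step is to bound $H_\xd$ in $C^{\theta_0}(\overline{D''})$, uniformly in $\xd$, for a suitable $\theta_0\in(0,2s)$. By Lemma \ref{smoothlemma}(i) and estimate \eqref{smoothest1b}, for every $\xb\in(0,2s)$ we have $\norm{u}_{C^\xb(\overline{D'})}\le c\norm{f}_{L^\infty(\xO;|x|^b)}$; since $x\mapsto|x|^{-2s}$ is smooth on the compact set $\overline{D'}$, the product $u|x|^{-2s}$ lies in $C^\xb(\overline{D'})$ with a bound of the same type. Taking $\theta_0:=\min\{\theta,\xb\}$, with $\xb\in(0,2s)$ chosen larger than $\theta$ when $\theta<2s$ and arbitrarily close to $2s$ otherwise, and using that convolution with a probability density does not increase the $C^{\theta_0}$-seminorm, we obtain for $\xd<\dist(D'',\partial D')$
\[
\norm{H_\xd}_{C^{\theta_0}(\overline{D''})}\le c\big(\norm{f}_{L^\infty(\xO;|x|^b)}+\norm{f}_{C^\theta(\overline{D'})}\big).
\]
Moreover, since $u=0$ in $\BBR^N\setminus\xO$ and $|u(x)|\le c\norm{f}_{L^\infty(\xO;|x|^b)}|x|^{\tau_+}$ by \eqref{estLinfty} with $\tau_+>-N$, we get $\int_{\BBR^N}(1+|x|^{N+2s})^{-1}|u_\xd|\,dx\le c\norm{f}_{L^\infty(\xO;|x|^b)}$ uniformly for $\xd$ small.

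The heart of the argument is then the interior Schauder estimate for $(-\xD)^s$ (see, e.g., \cite{RS}): from $(-\xD)^su_\xd=H_\xd$ in $D''$ with $H_\xd\in C^{\theta_0}(\overline{D''})$ and $u_\xd$ controlled in $L^1(\BBR^N;(1+|x|^{N+2s})^{-1})$, there exist $\xb_0>0$ (chosen so that $2s+\xb_0\notin\N$) and $c>0$ depending only on $N,s,\xO,\xm,\xb_0$ and $\dist(D,\partial\xO\cup\{0\})$ such that
\[
\norm{u_\xd}_{C^{2s+\xb_0}(\overline{D})}\le c\big(\norm{u_\xd}_{L^1(\BBR^N;(1+|x|^{N+2s})^{-1})}+\norm{H_\xd}_{C^{\theta_0}(\overline{D''})}\big)\le c\big(\norm{f}_{L^\infty(\xO;|x|^b)}+\norm{f}_{C^\theta(\overline{D})}\big),
\]
where $D'$ is regarded as a fixed small enlargement of $D$. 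By the Arzel\`a--Ascoli theorem there is a sequence $\xd_n\to0$ with $u_{\xd_n}\to u$ in $C^{2s+\xb_0'}(\overline{D})$ for every $\xb_0'<\xb_0$; since $u_{\xd_n}\to u$ uniformly, the limit is $u$, and lower semicontinuity of the $C^{2s+\xb_0}$-norm gives $u\in C^{2s+\xb_0}(\overline{D})$ together with \eqref{smoothest3}. A standard covering argument upgrades this to $u\in C^{2s+\xb_0}_{\loc}(\xO\setminus\{0\})$.

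The step I expect to be the main obstacle is the correct invocation of the interior Schauder estimate: one must make sure the H\"older-to-H\"older gain of $2s$ derivatives is available in a form applicable to the smooth, globally integrable functions $u_\xd$ (not to $u$ directly), handle the non-integer restriction on $2s+\xb_0$ so that $\xb_0>0$ is genuinely secured, and track the dependence of all constants only on the admissible data, in particular on $\dist(D,\partial\xO\cup\{0\})$ but never on $\xd$. The minor domain mismatch ($\overline{D'}$ versus $\overline{D}$ on the right-hand side) and the definition of $\theta_0$ when $\theta\ge 2s$ are routine and are handled by the nesting $D\Subset D''\Subset D'$.
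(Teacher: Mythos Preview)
Your proposal is correct and follows essentially the same approach as the paper: mollify, move the Hardy term to the right-hand side using the $C^\beta$ regularity already obtained in Lemma~\ref{smoothlemma}(i), apply the interior H\"older-to-H\"older Schauder estimate for $(-\Delta)^s$ from \cite{RS} (specifically Corollary~2.4 there), and pass to the limit via Arzel\`a--Ascoli and a covering argument. The only cosmetic difference is that the paper works on balls $B_\rho(x_0)$ around a point and covers $D$ at the end, whereas you work directly with nested domains $D\Subset D''\Subset D'$; the content is identical.
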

\begin{proof}[\textbf{Proof}]
First we note that
\bal
\ll u,\phi \gg_{\mu} = \int_{\Omega} f \phi dx, \quad \forall \xf\in C_0^\infty(\xO\setminus\{0\}).
\eal

Let $x_0\in \xO\setminus \{0\}$ then there exists $\xr>0$ such that $B_{2\xr}(x_0)\subset \xO\setminus \{0\}$. Consider the mollifiers $\xz_\xd$, then for $\xd>0$ small enough and put $u_\xd=\xz_\xd \ast u$. {Then $u_\delta$ solves equation \eqref{eq-udelta} with $H_\delta$ as in \eqref{Hdelta}. 
By repeating the argument after \eqref{Hdelta} in part (i) of the proof of Lemma \ref{smoothlemma}, we obtain \eqref{smooth-0}.}
This, together with \cite[Corollary 2.4]{RS}, implies the existence of a constant $0<\xb_0<\min\{\theta,2s\}$ such that
\bal
\norm{u_{\xd}}_{C^{2s+\xb_0}(\overline{B_{\frac{\xr}{16}}}(x_0))}\leq c\,\big(\norm{f}_{C^\theta(\overline{B_{\xr}}(x_0))}+\norm{f}_{L^\infty(\xO;|x|^b)}\big),
\eal
where the constant $c$ depends only on $N,s,\xm,|x_0|, d(x_0), \xO,\xb_0.$ By the Arzel\'a--Ascoli theorem, there exists a subsequence $\{\xd_n\}$ converging to $0$ such that $u_{\xd_n}\to u$ in $C^{2s+\xb_0}(\overline{B_{\frac{\xr}{16}}}(x_0))$, hence
\ba\label{estsmoothb}\BAL
\norm{u}_{C^{2s+\xb_0}(\overline{B_{\frac{\xr}{16}}}(x_0))}&\leq c\,\big(\norm{f}_{C^\theta(\overline{B_{\xr}}(x_0))}+\norm{f}_{L^\infty(\xO;|x|^b)}\big).
\EAL
\ea
The desired results follows by the above inequality and a standard covering argument.
\end{proof}

As a consequence of the above results, we obtain the following result

\begin{corollary}\label{rem 3.1-2}
 Assume that   $b<2s-\tau_+,$ $\theta\in(0,1)$ and $f\in L^\infty(\xO;|x|^b)\cap C^\theta_{\loc}(\xO\setminus\{0\})$. Then the weak solution of \eqref{veryweaksolution} belongs to $\mathbf{X}_\xm(\xO;|x|^{-b})$.
\end{corollary}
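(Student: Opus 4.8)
The plan is to identify the weak solution of \eqref{veryweaksolution} with an element of $\Hm$ and then check, one by one, the three defining properties of the test space $\mathbf{X}_\xm(\xO;|x|^{-b})$ in Definition~\ref{def:weaksol}. First, since $f\in L^\infty(\xO;|x|^b)$, Remark~\ref{LinftyL2} gives $f\in L^2(\xO;|x|^a)$ for some $a\in[0,2s)$, so Proposition~\ref{existence} produces a unique $u\in\Hm$ solving $\CL_\xm^s u=f$ in the sense of Definition~\ref{weaksolution}; the symmetry of $(-\xD)^s_{\tp}$ with respect to the measure $|x|^{\tp}\,dx$ — exactly the manipulation used in the proof of Proposition~\ref{existence} — shows that this $u$ is also a weak solution of \eqref{veryweaksolution}. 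Consequently $u$ enjoys all the estimates of Lemmas~\ref{estLinftylemma}, \ref{smoothlemma} and \ref{smoothlemma2} with this $f$: the pointwise bound $|u(x)|\le c\,\|f\|_{L^\infty(\xO;|x|^b)}|x|^{\tp}$ near the origin, the boundary bound $|u(x)|\le c\,d(x)^s$ on $\xO\sms B_\xe(0)$, and the interior regularity $u\in C^{2s+\xb_0}_{\loc}(\xO\sms\{0\})$.

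For property (i) I would prove $u\in\Hst$ by combining these bounds with an approximation argument. Finiteness of $\|u\|_{s,\tp}$ follows by splitting $\BBR^N\times\BBR^N$ into the region near the origin, the region near $\partial\xO$, and the interior: on the first one uses $|u(x)|\le c|x|^{\tp}$ together with $\tp\ge\tfrac{2s-N}{2}$, on the second $|u(x)|\le c\,d(x)^s$, and on the interior the local H\"older bound \eqref{smoothest1b}. The approximability of $u$ by $C_0^\infty(\xO\sms\{0\})$ in the norm $\|\cdot\|_{s,\tp}$ is then obtained by truncating near the origin with the cut-off $\eta_h$ of \eqref{etah} (as in Step~1 of the proof of Proposition~\ref{density}), truncating near $\partial\xO$, and mollifying. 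When $\xm\ge0$ this step simplifies, because then $|x|^{\tp}$ is bounded on $\xO$, $\Hm=H_0^s(\xO)$ by \eqref{compare-norm}, and $\|\varphi\|_{s,\tp}\le c\,\|\varphi\|_{s,0}$ for $\varphi\in C_0^\infty(\xO)$, whence $\Hm\subset\Hst$ at once.

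For properties (ii) and (iii) I would use that $u\in C^{2s+\xb_0}_{\loc}(\xO\sms\{0\})$, $u\equiv0$ on $\BBR^N\sms\xO$, $|u(x)|\le c|x|^{\tp}$ near $0$, and $\tp\in[\tfrac{2s-N}{2},2s)$: these imply that the principal value defining $(-\xD)^s_{\tp}u(x)$ converges absolutely for every $x\in\xO\sms\{0\}$ (near-diagonal part by the $C^{2s+\xb_0}$ regularity, part near $0$ since $\tp>-N$, far part since $\tp<2s$). For the quantitative bound I would write
\[
(-\xD)^s_{\tp}u(x)=|x|^{\tp}(-\xD)^s u(x)+R(x),\qquad R(x):=C_{N,s}\int_{\BBR^N}\frac{u(x)-u(y)}{|x-y|^{N+2s}}\big(|y|^{\tp}-|x|^{\tp}\big)\,dy,
\]
where $(-\xD)^s u=f-\xm|x|^{-2s}u$ in $\xO\sms\{0\}$ because $\CL_\xm^s u=f$, and $R$ is an absolutely convergent lower-order ``commutator'' term (no principal value is needed, since $|y|^{\tp}-|x|^{\tp}$ vanishes to first order at $y=x$). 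Then, using $|f(x)|\le\|f\|_{L^\infty(\xO;|x|^b)}|x|^{-b}$, the bound $|u(x)|\le c|x|^{\tp}$, the rescaled interior Schauder estimates \eqref{smoothest1b} and \eqref{smoothest3}, and the assumption $b<2s-\tp$, one gets $\sup_{x\in\xO\sms\{0\}}\big||x|^b(-\xD)^s_{\tp}u(x)\big|<+\infty$, which is (ii). For (iii), given a compact $K\subset\xO\sms\{0\}$ one takes $\xd_0<\tfrac14\dist(K,\partial\xO\cup\{0\})$: for $0<\xd\le\xd_0$ the part of $(-\xD)^s_{\tp,\xd}u$ over $\{|x-y|\ge\xd_0\}$ is independent of $\xd$ and bounded on $K$ by the size and decay of $u$, while the annular part over $\{\xd\le|x-y|<\xd_0\}$ is dominated, uniformly in $\xd$, by $c\int_{\{|x-y|<\xd_0\}}|x-y|^{\xb-N-2s}|y|^{\tp}\,dy$ with $\xb\in(2s,2s+\xb_0]$, a fixed function in $L^\infty(K)\subset L^1_{\loc}(\xO\sms\{0\})$.

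The hard part will be the uniform estimate $\sup_{x\in\xO\sms\{0\}}\big||x|^b(-\xD)^s_{\tp}u(x)\big|<+\infty$ in (ii) (and, to a lesser degree, the finiteness of $\|u\|_{s,\tp}$ and the approximation in (i)): it requires carefully controlling, both near the origin and near $\partial\xO$, the interplay between the pointwise size $|u|\lesssim|x|^{\tp}$, the scale-dependent interior regularity of $u$, and the nonlocal tail of $(-\xD)^s_{\tp}$, with the constraint $b<2s-\tp$ being exactly what renders the contribution near the origin finite.
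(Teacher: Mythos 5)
The paper gives no written proof of this corollary (it is presented as an immediate consequence of Lemmas \ref{estLinftylemma}--\ref{smoothlemma2}), so the comparison is with the intended one--line argument, which is: if $u\in\Hm$ is the weak solution of \eqref{veryweaksolution}, the function that lies in $\mathbf{X}_\xm(\xO;|x|^{-b})$ is $v:=|x|^{-\tau_+}u$, \emph{not} $u$ itself. Then property (i) is exactly Theorem \ref{th:main-1}(ii); $v$ inherits the interior $C^{2s+\beta_0}_{\loc}(\xO\setminus\{0\})$ regularity and the bounds of Lemmas \ref{estLinftylemma}--\ref{smoothlemma2}, so $(-\xD)^s_{\tau_+}v$ exists pointwise in $\xO\setminus\{0\}$ and, by the variational identity $\langle v,\zeta\rangle_{s,\tau_+}=(f,\zeta)_{\tau_+}$, equals $f$ there; hence $|x|^b|(-\xD)^s_{\tau_+}v|=|x|^b|f|\le\|f\|_{L^\infty(\xO;|x|^b)}$, which is property (ii), and property (iii) follows from the same interior regularity exactly as in your last paragraph. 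This reading is forced by every later use of the corollary (the test function $\xi_b$ is used with $(-\xD)^s_{\tau_+}\xi_b=|x|^{-b}$ and $|\xi_b|\le C d^s$) and by \eqref{test sp}--\eqref{estLinfty3}, which place $\mathbf{X}_\xm(\xO;|x|^{-b})\subset L^\infty(\xO)$.

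Your proposal instead verifies the three properties for the literal $u$, and the decisive step, property (ii), fails. In your decomposition $(-\xD)^s_{\tau_+}u=|x|^{\tau_+}\big(f-\mu|x|^{-2s}u\big)+R$, the single term $\mu|x|^{\tau_+-2s}u$ already destroys the bound: the rate $|u|\sim c|x|^{\tau_+}$ of Lemma \ref{estLinftylemma} is attained for generic $f$ (e.g.\ $f\ge0$, $f\not\equiv0$), so $|x|^b\,|\mu|\,|x|^{\tau_+-2s}|u|\sim|x|^{b+2\tau_+-2s}$, which is unbounded near the origin whenever $b<2s-2\tau_+$ --- always the case when $\mu<0$, and for a nonempty admissible range of $b$ when $\mu>0$ --- and the commutator $R$ does not cancel it: testing on the exact profile $|x|^{\tau_+}$ one has $(-\xD)^s_{\tau_+}u=|x|^{-\tau_+}\CL_\mu^s(|x|^{\tau_+}u)$ with $|x|^{\tau_+}u\sim c|x|^{2\tau_+}$ and $\CL_\mu^s(|x|^{2\tau_+})=c(2\tau_+)|x|^{2\tau_+-2s}$, $c(2\tau_+)\ne0$ for $\mu\ne0$, so $|x|^b(-\xD)^s_{\tau_+}u\sim|x|^{b+\tau_+-2s}\to\infty$ precisely because $b<2s-\tau_+$. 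Moreover, for $\mu<0$ one has $\tau_+<0$, so $u$ blows up like $|x|^{\tau_+}$ and cannot belong to $\mathbf{X}_\xm(\xO;|x|^{-b})\subset L^\infty(\xO)$ at all, and property (i) for $u$ (membership in $\Hst$) is likewise not what Theorem \ref{th:main-1}(ii) provides. The corollary's wording invites your reading, but the statement is only true --- and then almost trivially --- for $|x|^{-\tau_+}u$; replacing $u$ by $v$ throughout collapses your commutator and Schauder estimates to the short verification above.
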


\section{Poisson problems} \label{sec:Poisson}

In this section, we study problem \eqref{veryweaksolutionmesure}. 
\subsection{$L^1$ sources} \label{subsec:L1source}

We start with the case of $L^1$ source.

\begin{proof}[\textbf{Proof of Theorem \ref{existence2}}]
\noindent \textit{Existence.}
Let $\theta\in(0,1)$ and $\{f_n\} \subset C^\theta(\overline{\xO})$ such that $f_n\to f$ in $L^1(\xO; d(x)^s|x|^{\tau_+})$. By Proposition \ref{existence}, there exists a unique function $v_n\in H^{s}_0(\xO;|x|^{\tau_+})$ such that
\ba \label{vngn-1}
\langle v_n,\phi \rangle_{s,\tau_+} = \int_{\Omega}f_n \phi |x|^{\tau_+}dx, \quad \forall \xf\in H^{s}_0(\xO;|x|^{\tau_+}).
\ea
Since $v_n\in H^{s}_0(\xO;|x|^{\tau_+}),$ by Proposition \ref{density}, there exists $v_{n,m}\in C_0^\infty(\xO\setminus\{0\})$ such that
\bal
\lim_{m\to\infty}\norm{v_n-v_{n,m}}_{s,\tau_+}=0.
\eal

Let $\psi\in \mathbf{X}_\xm(\xO;|x|^{-b})$. By property (iii) in the definition of $\mathbf{X}_\xm(\xO;|x|^{-b})$ (see Definition \ref{def:weaksol}), we have
\ba\label{check}
\langle v_{n,m}, \psi \rangle_{s,\tau_+}
=\int_\xO v_{n,m}(x)(-\xD)^s_{\tau_+}\psi(x) |x|^{\tau_+}dx.
\ea
Since $|(-\xD)^s_{\tau_+}\psi(x)|\leq C|x|^{-b}$ for a.e. $x \in \Omega \setminus \{0\}$ due to property (ii) in Definition \ref{def:weaksol}, by \eqref{subcrit1} and \eqref{crit1}, we have
\ba \label{vnmvn}
\lim_{m\to\infty}\int_\xO v_{n,m}(x)(-\xD)^s_{\tau_+}\psi(x) |x|^{\tau_+}dx=\int_\xO v_{n}(x)(-\xD)^s_{\tau_+}\psi(x) |x|^{\tau_+}dx.
\ea
Combining \eqref{vngn-1}--\eqref{vnmvn} yields
\ba  \label{vnfn-2}
\int_\xO v_{n}(x)(-\xD)^s_{\tau_+}\psi(x) |x|^{\tau_+}dx = \langle v_n,\psi \rangle_{s,\gamma} =
\int_{\xO}f_n(x)\psi(x) |x|^{\tau_+}dx.
\ea
By Lemma \ref{smoothlemma} (i), $v_n\in C^{\xb}(\xO\setminus\{0\})$ for any $\xb\in (0,2s)$.

Let $h_m:\BBR\to\BBR$ be a smooth function such that $h_m(t)\to \sign(t)$  and $|h_m(t)|\leq 1$ for any $t\in \BBR$. By Proposition \ref{existence} and Remark \ref{LinftyL2}, there exists a unique weak solution $U_m \in \Hm$  of
\bal \CL_\mu^s U=|x|^{-b} h_m(v_n) \quad \text{in } \Omega
\eal
in the sense of Definition \ref{weaksolution}. Put $\tilde U_m=|x|^{-\tau_+} U_m$, then $\tilde U_m \in H_0^s(\Omega;|x|^{\tau_+})$ and
\ba \label{tildeUzeta}
\langle \tilde U, \zeta \rangle_{s,\tau_+}
= (h_m(v_n),\zeta)_{\tau_+-b}, \quad \forall \zeta \in H_0^s(\Omega;|x|^{\tau_+}).
\ea
By \eqref{estLinfty} and \eqref{smoothest1}, we have
\ba\label{estLinfty2}
|\tilde U_m(x)|\leq c\,d(x)^s \quad \text{for a.e. } x \in \xO,
\ea
where the constant $c$ is independent of $\tilde U_m$. Furthermore in view of Lemma \ref{smoothlemma2}, we can easily show that $\tilde U_m\in \mathbf{X}_\xm(\xO;|x|^{-b})$. By taking $\psi=\tilde U$ in \eqref{vnfn-2}, we derive
\bal
\int_{\Omega} v_n(x) (-\Delta)^s_{\tau_+}\tilde U(x) |x|^{\tau_+} dx
= \langle v_n, \tilde U \rangle_{s,\tau_+}
=\int_{\Omega}f_n(x)\tilde U(x)|x|^{\tau_+} dx.
\eal
Taking $\zeta=v_n$ as a test function in \eqref{tildeUzeta}, we have
\bal
\langle  \tilde U, v_n \rangle_{s,\tau_+} =  \int_{\xO}h_m(v_n)v_n |x|^{\tau_+-b} dx.
\eal
From the two preceding equalities and \eqref{estLinfty2}, we derive
\bal
\int_\xO v_n(x)h_m(v_n(x)) |x|^{\tau_+-b}dx\leq c\int_{\xO}|f_n(x)| d(x)^s |x|^{\tau_+}dx.
\eal
By \eqref{subcrit1}, \eqref{crit1}, the fact that $h_m(t)\to \sign(t)$ as $m \to \infty$, and the dominated convergence theorem, we have
 \ba\label{4.14}
\int_\xO |v_n(x)| |x|^{\tau_+-b}dx\leq c\int_{\xO}|f_n(x)|d(x)^s |x|^{\tau_+}dx.
 \ea
Using a similar argument, we can show that
\ba \label{vnvk-1}
\| v_n - v_k \|_{L^1(\Omega;|x|^{\tau_+-b})} \leq c\, \| f_n-f_k \|_{L^1(\Omega;d(x)^s|x|^{\tau_+})}, \quad \forall n,k \in \N.
\ea
Since $\{f_n\}$ is a convergent sequence in $L^1(\Omega;d(x)^s|x|^{\tau_+})$, we deduce from \eqref{vnvk-1} that $\{v_n\}$ is a Cauchy sequence in $L^1(\Omega;|x|^{\tau_+-b})$, which in turn implies that there exists $v \in L^1(\Omega;|x|^{\tau_+-b})$ such that $v_n \to v$ in $L^1(\Omega;|x|^{\tau_+-b})$. By property (ii) in Definition \ref{def:weaksol}, \eqref{estLinfty3} and the dominated convergence theorem, we deduce from \eqref{vnfn-2} that
\bal
\int_\xO v(-\xD)^s_{\tau_+}\psi |x|^{\tau_+}dx=\int_{\xO}f(x)\psi(x) |x|^{\tau_+}dx.
\eal
Put $u=|x|^{\tau_+} v$ then $u \in L^1(\Omega;|x|^{-b})$ and $u$ is a weak solution of \eqref{veryweaksolution}. \medskip

\noindent \textit{A priori estimate.} By letting $n \to \infty$ in \eqref{4.14} and using $u=|x|^{\tau_+} v$, we deduce \eqref{est:apriori-1}. \medskip

\noindent \textit{Kato type inequality.} First we prove estimate \eqref{Kato:+-1}. Let $\{\xO_l\}_{l \in \N}$ be a smooth exhaustion of $\xO$, namely $\Omega_l \Subset \Omega_{l+1}$ and $\cup_{l \in \N}\Omega_l = \Omega$.  Let $\phi_l\in C^\infty_0(\xO)$ be such that $\phi_l=1$ on $\overline{\xO}_l$ and $0\leq\phi_l\leq1$. Put $\xi_l = (-\xD)^s_{\tau_+}\phi_l$.

For $\varepsilon>0$, since $(v_n-\xe\xf_l)^+\in H^{s}_0(\xO;|x|^{\tau_+}),$ by Theorem \ref{density}, there exists a sequence $0\leq \tilde v_{n,m}\in C_0^\infty(\xO\setminus\{0\}),$ such that
\bal
\lim_{m\to\infty}\norm{(v_n-\xe\phi_l)^+-\tilde v_{n,m}}_{H^{s}_0(\xO;|x|^{\tau_+})}=0.
\eal
Proceeding as above, for any $\psi \in \mathbf{X}_\xm(\xO;|x|^{-b})$, we have that
\bal
\langle (v_n-\varepsilon \phi_l)^+, \psi \rangle_{s,\tau_+} &= \lim_{m \to \infty} \langle \tilde v_{n,m}, \psi  \rangle_{s,\tau_+} \\
&=\lim_{m\to\infty}\int_\xO \tilde v_{n,m}(-\xD)^s_{\tau_+}\psi |x|^{\tau_+}dx=\int_\xO (v_n-\xe\xf_l)^+(-\xD)^s_{\tau_+}\psi |x|^{\tau_+}dx.
\eal

By Theorem \ref{th:main-2}, there holds
\ba \label{vnezetal} \begin{split}
\int_\xO (v_{n}-\xe \phi_l)^+&(-\xD)^s_{\tau_+}\psi |x|^{\tau_+}dx\leq\int_{\xO}\sign^+(v_{n}-\xe\phi_l )\psi(f_n-\xe \xi_l)|x|^{\tau_+}dx\\
&\leq\int_{\xO_l}\sign^+(v_{n}-\xe )\psi(f_n-\xe \xi_l)|x|^{\tau_+}dx+\int_{\xO\setminus\xO_l}\psi|f_n-\xe \xi_l||x|^{\tau_+}dx,
\end{split}
\ea
for any nonnegative $\psi\in \mathbf{X}_\xm(\xO;|x|^{-b})$.

Let $\xe_m\downarrow 0$ be such that $|\{x\in\xO:\;v=\xe_m\}|=0,$ then $\sign^+(v_{n}-\xe_m )\to \sign^+(v-\xe_m )$ a.e. in $\xO$. Replacing $\varepsilon$ by $\varepsilon_n$ in \eqref{vnezetal} and using the estimate $|\psi(x)| \leq c\,d(x)^s$ for a.e. $x \in \Omega$, we obtain
\bal
	\int_\xO (v_{n}-\xe_m \phi_l)^+(-\xD)^s_{\tau_+}\psi |x|^{\tau_+}dx
	&\leq\int_{\xO_l}\sign^+(v_{n}-\xe_m )\psi(f_n- \xe_m  \xi_l)|x|^{\tau_+}dx \\
	&\quad +c\int_{\xO\setminus\xO_l}|f_n-\xe_m \xi_l|d(x)^s|x|^{\tau_+}dx.
\eal
Letting $n\to\infty$ and  $m\to\infty$ in \eqref{vnezetal} successively, we obtain
\bal
\int_\xO v^+(-\xD)^s_{\tau_+}\psi |x|^{\tau_+}dx\leq\int_{\xO_l}\sign^+(v)\psi f|x|^{\tau_+}dx+c\int_{\xO\setminus\xO_l}|f|d(x)^s|x|^{\tau_+}dx.
\eal
Using the relation $u=|x|^{\tau_+}v$ and  letting $l \to +\infty$ in the above estimate yield \eqref{Kato:+-1}.

By employing \eqref{Kato:+-1} with $-v$ in place of $v$, we obtain
\bal
\int_\xO (-v)^+(-\xD)^s_{\tau_+}\psi |x|^{\tau_+}dx\leq-\int_{\xO}\sign^+(-v)\psi f|x|^{\tau_+}dx.
\eal
Adding term by term of the above inequality and \eqref{Kato:+-1}, and then using the relation $u=|x|^{\tau_+}v$, we  deduce \eqref{Kato||-1}. \medskip

\noindent \textit{Monotonicity and uniqueness.}  Let $f_1,f_2 \in L^1(\Omega;d(x)^s|x|^{\tau_+})$ such that $f_1 \leq f_2$ a.e. in $\Omega$ and let $u_i$, $i=1,2$, is a weak solution to \eqref{veryweaksolution} with $f=f_i$. By \eqref{Kato:+-1} with $u$ replaced by $u_1-u_2$ and $f$ replaced by $f_1-f_2$, we obtain
\ba \label{Katou1u2}
\int_\xO (u_1-u_2)^+(-\xD)^s_{\tau_+}\psi dx\leq\int_\xO (f_1-f_2)\sign^+(u_1-u_2)\psi |x|^{\tau_+} dx \leq 0,\ \ \forall \, 0\leq\psi\in \mathbf{X}_\xm(\xO;|x|^{-b}).
\ea
Let $\xi_b$ be the weak solution to
\ba \label{xib} \left\{  \BAL
\CL_\mu^s \xi &= |x|^{-b} \  &&\text{in } \Omega, \\
\xi &=0 \  &&\text{in }  \R^N \setminus \Omega.
\EAL \right. \ea
Then $\xi_b \in \mathbf{X}_\xm(\xO;|x|^{-b})$ due to Corollary \ref{rem 3.1-2} and hence by \eqref{estLinfty3}, $|\xi_b| \leq Cd^s$ a.e. in $\Omega$. Taking $\psi=\xi_b$ in \eqref{Katou1u2}, we deduce that $(u_1-u_2)^+ =0$ a.e. in $\Omega \setminus \{0\}$, namely $u_1 \leq u_2$ a.e. in $\Omega \setminus \{0\}$. Thus the mapping $f \mapsto $

The uniqueness follows straightforward from the monotonicity. 

 In fact, let $ f \in L^1(\Omega;d(x)^s|x|^{\tau_+})$ be nonnegative and $u$ be the unique weak solution to problem \eqref{veryweaksolution}.  Since the mapping $f \mapsto u$ is nondecreasing, $f \geq 0$, and $0$ is the unique weak solution to problem \eqref{veryweaksolution} with zero source, we deduce that $u \geq 0$ a.e. in $\Omega \setminus \{0\}$. 
\end{proof}

\subsection{Measure sources} \label{subsec:measuresource}

We start this subsection by noting that if $\xm_0<\xm$ then
\bal
\frac{C_{N,s}}{2}\int_{\BBR^N}\int_{\BBR^N}\frac{|u(x)-u(y)|^2}{|x-y|^{N+2s}}dydx+\xm \int_{\BBR^N}\frac{|u|^2}{|x|^{2s}}dx &\geq\frac{C_{N,s}}{2}\frac{\xm_0-\xm}{\xm_0}\int_{\BBR^N}\int_{\BBR^N}\frac{|u(x)-u(y)|^2}{|x-y|^{N+2s}}dydx\\
&\geq C(N,s,\xm,\xm_0)\left(\int_{\BBR^N}|u|^\frac{2N}{N-2s}dx\right)^{\frac{N-2s}{N}},
\eal
for any $u\in C^\infty_0(\xO)$. 
Setting $u=|x|^{\tau_+}v$, we derive
\ba\label{subcritsobolev1}
\BAL
\frac{C_{N,s}}{2}\int_{\BBR^N}\int_{\BBR^N}\frac{|v(x)-v(y)|^2}{|x-y|^{N+2s}}|y|^{\tau_+}dy |x|^{\tau_+}dx\geq C(\xO,N,s,\xm,\xm_0)\left(\int_{\Omega}|v|^\frac{2N}{N-2s}|x|^{\frac{2N}{N-2s}\tau_+}dx\right)^{\frac{N-2s}{N}}.
\EAL
\ea
By Theorem \ref{density},  inequality \eqref{subcritsobolev1} is valid for any $v\in H^{s}_0(\xO;|x|^{\tau_+})$.

If $\xm=\xm_0$ then by \cite[Theorem 3]{tz}, there exists a positive constant $C=C(N,s)$ such that
\bal
\frac{C_{N,s}}{2}\int_{\BBR^N}\int_{\BBR^N}\frac{|u(x)-u(y)|^2}{|x-y|^{N+2s}}dydx+\xm_0 \int_{\BBR^N}\frac{|u|^2}{|x|^{2s}}dx\geq C(N,s) \left(\int_{\BBR^N}|u|^\frac{2N}{N-2s}X(|x|)^{\frac{2(N-s)}{N-2s}}dx\right)^{\frac{N-2s}{N}},
\eal
for any $u\in C^\infty_0(\xO)$, where $X$ is defined in \eqref{X}. Setting $u(x)=|x|^{\frac{2s-N}{2}}v(x),$  we have 
\ba\label{critsobolev1}\BAL
\int_{\BBR^N}\int_{\BBR^N}\frac{|v(x)-v(y)|^2}{|x-y|^{N+2s}}|y|^{\frac{2s-N}{2}}dy |x|^{\frac{2s-N}{2}}dx\geq C(N,s) \left(\int_{\BBR^N}|v|^\frac{2N}{N-s}|x|^{-N}X(|x|)^{\frac{2(N-s)}{N-2s}}dx\right)^{\frac{N-2s}{N}}.
\EAL
\ea
By Theorem \ref{density}, inequality \eqref{critsobolev1} is valid for any $v\in H^{s}_0(\xO;|x|^{\frac{2s-N}{2} })$.

From \eqref{subcritsobolev1} and \eqref{critsobolev1}, we see that, for any $\mu \geq \mu$, 
\ba\label{subcritsobolev2}
\BAL
&\quad \frac{C_{N,s}}{2}\int_{\BBR^N}\int_{\BBR^N}\frac{|v(x)-v(y)|^2}{|x-y|^{N+2s}}|y|^{\tau_+}dy |x|^{\tau_+}dx\\
&\geq C(\xO,N,s,\xm,\xm_0)\left(\int_{\Omega}|v|^\frac{2N}{N-2s}|x|^{\frac{2N}{N-2s}\tau_+}X(|x|)^{\frac{2(N-s)}{N-2s}}dx\right)^{\frac{N-2s}{N}}, \quad \forall v \in H^{s}_0(\xO;|x|^{\tau_+}). 
\EAL
\ea

We need the following a priori Lebesgue estimate on weak solutions to equation \eqref{solutionL2}. 

We recall the definition of weak Lebesgue spaces. Let $D$ be a bounded domain in $\R^N$. The weak Lebesgue space $L_w^q(D)$, $1 \le q < \infty$, is defined by
\bal
L_w^q(D):= \left\{ u \in L^1_{\loc}(D):  \sup_{\lambda > 0} \lambda^q \int_{D} \mathbf{1}_{\{ x \in D: |u(x)| > \lambda \} } dx < +\infty \right\},
\eal
where $\mathbf{1}_{A}$ denotes the indicator function of a set $A \subset \R^N$. Put
\bal
\|u\|^*_{L_w^q(D)}:= \left( \sup_{\lambda > 0} \lambda^q \int_{\Omega} \mathbf{1}_{\{ x \in D: |u(x)| > \lambda \} } dx \right)^{\frac{1}{q} }.
\eal
We note that $\|\cdot\|^*_{L_w^q(D)}$ is not a norm, but for $q >1$, it is equivalent to the norm
\bal
\|u\|_{L_w^q(D)}:= \sup \left\{ \frac{\int_{A} |u|^q dx }{ (\int_{A} dx)^{1-\frac{1}{q}} }: A \subset \Omega, A \text{ measurable},  |A|>0\right\}.
\eal
In fact, there hold 
\ba \label{Marcin-equi}
	\|u\|^*_{L_w^q(D)} \leq \|u\|_{L_w^q(D)} \leq \frac{q}{q-1}\|u\|^*_{L_w^q(D)}, \quad \forall u \in L_w^q(D).
\ea


It is well-known that the following embeddings hold
\bal L^q(D) \subset L_w^q(D) \subset L^{r}(D),\quad \forall r \in [1,q).
\eal

\begin{lemma}\label{est2}
 Assume that  $\mu \geq \mu_0$, $f\in L^2(\xO)$ and $u \in \Hm$ is the unique weak solution of \eqref{solutionL2}. For any $r>0$ and $q\in(1, 2_s^* )$, there exists $C=C(N,\Omega,s,\mu,r,q)>0$ such that
	\ba \label{u>k-1}
	\| u \|_{L^q(\Omega \setminus B_r(0))} \leq C \| f \|_{L^1(\Omega;|x|^{\tau_+})}.
	\ea
\end{lemma}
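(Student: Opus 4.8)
The statement is a Marcinkiewicz/weak-Lebesgue type a priori estimate: for a weak solution $u \in \Hm$ of $\CL_\mu^s u = f$ away from the origin, one controls $\|u\|_{L^q(\Omega \setminus B_r(0))}$ by $\|f\|_{L^1(\Omega;|x|^{\tau_+})}$ for $q$ below the fractional Sobolev exponent. Since the Sobolev-type inequality \eqref{subcritsobolev2} has just been established, the natural route is the standard duality (or truncation) argument adapted to the weighted setting. I would work with $v = |x|^{-\tau_+}u \in \Hsg$ (with $\gamma = \tau_+$), which by Theorem \ref{th:main-1}(ii) satisfies $\langle v, \zeta \rangle_{s,\tau_+} = (f,\zeta)_{\tau_+}$ for all $\zeta \in \Hsg$, and prove the estimate for $v$; then $u = |x|^{\tau_+}v$ and on $\Omega \setminus B_r(0)$ the weight $|x|^{\tau_+}$ is bounded above and below, so the bound transfers.

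\textbf{Key steps.} First, I would reduce to showing $\|v\|_{L^q(\Omega;|x|^{\frac{2N}{N-2s}\tau_+})} \le C\|f\|_{L^1(\Omega;|x|^{\tau_+})}$ for $q < 2_s^*$, again because on $\Omega \setminus B_r(0)$ the two weights are comparable. Second, by the embedding $L^{2_s^*}_w \subset L^q$ for $q < 2_s^*$ and \eqref{Marcin-equi}, it suffices to get a \emph{weak}-$L^{2_s^*}$ bound on $v$ with respect to the measure $|x|^{\frac{2N}{N-2s}\tau_+}\,dx$. Third, for the weak-type bound I would use the standard Stampacchia/truncation trick: test the equation with $\zeta = T_k(v) := \max\{-k,\min\{v,k\}\}$ (which lies in $\Hsg$ — one verifies this via Proposition \ref{density} and the fact that truncation does not increase the Gagliardo seminorm), obtaining
\[
\|T_k(v)\|_{s,\tau_+}^2 = \langle v, T_k(v)\rangle_{s,\tau_+} = (f, T_k(v))_{\tau_+} \le k\,\|f\|_{L^1(\Omega;|x|^{\tau_+})}.
\]
Fourth, apply \eqref{subcritsobolev2} to $T_k(v)$ and estimate, for $\lambda < k$,
\[
\lambda^{2_s^*}\int_{\{|v|>\lambda\}}|x|^{\frac{2N}{N-2s}\tau_+}X(|x|)^{\frac{2(N-s)}{N-2s}}dx \le \int_{\Omega}|T_\lambda(v)|^{2_s^*}|x|^{\frac{2N}{N-2s}\tau_+}X(|x|)^{\frac{2(N-s)}{N-2s}}dx \le C\,(\lambda\|f\|_{L^1(\Omega;|x|^{\tau_+})})^{\frac{2_s^*}{2}},
\]
so that $\lambda^{2_s^*/2}\int_{\{|v|>\lambda\}}|x|^{\frac{2N}{N-2s}\tau_+}X(|x|)^{\frac{2(N-s)}{N-2s}}dx \le C\|f\|_{L^1(\Omega;|x|^{\tau_+})}^{2_s^*/2}$, which is precisely a weak-$L^{2_s^*}$ estimate (and the extra logarithmic factor $X$ only helps, being bounded below on $\Omega$). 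Dropping $X$, interpolating down to $L^q$ for $q < 2_s^*$, and converting back to $u$ on $\Omega \setminus B_r(0)$ finishes it.

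\textbf{Main obstacle.} The delicate point is \emph{justifying that $T_k(v) \in \Hsg$ and that it is an admissible test function}, together with the chain-rule-type identity $\langle v, T_k(v)\rangle_{s,\tau_+} \ge \|T_k(v)\|_{s,\tau_+}^2$ — in the nonlocal weighted setting one needs the pointwise inequality $(v(x)-v(y))(T_k(v)(x)-T_k(v)(y)) \ge (T_k(v)(x)-T_k(v)(y))^2$, which is elementary but must be invoked carefully, and the membership $T_k(v)\in\Hsg$ should be argued by approximation using Proposition \ref{density} (smooth truncations of functions in $C_0^\infty(\Omega\setminus\{0\})$) plus the fact that Lipschitz truncation is bounded on $\Hsg$. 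A secondary technical nuisance is keeping track of the weight $|x|^{\frac{2N}{N-2s}\tau_+}$ and the factor $X(|x|)$ through the truncation argument; but since $X$ is bounded away from $0$ on the bounded set $\Omega$ and the final estimate is localized to $\Omega \setminus B_r(0)$ where $|x|^{\tau_+} \sim 1$, these weights can be absorbed into the constant $C = C(N,\Omega,s,\mu,r,q)$ at the very end. I would expect the whole argument to run in well under a page.
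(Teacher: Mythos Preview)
Your approach is essentially identical to the paper's: the paper also sets $v=|x|^{-\tau_+}u$, tests the variational identity \eqref{vphi-a} with the truncation $v_\lambda=\max\{-\lambda,\min\{v,\lambda\}\}$, uses the pointwise inequality $(v(x)-v(y))(v_\lambda(x)-v_\lambda(y))\ge(v_\lambda(x)-v_\lambda(y))^2$ to get $\|v_\lambda\|_{s,\tau_+}^2\le\lambda\|f\|_{L^1(\Omega;|x|^{\tau_+})}$, applies \eqref{subcritsobolev2}, and deduces a weak-Lebesgue bound on $\Omega\setminus B_r(0)$ which is then embedded into $L^q$. One minor remark: in your display the first ``$=$'' should be ``$\le$'' (as you yourself note later), and the level-set estimate you obtain is really weak-$L^{N/(N-2s)}$ rather than weak-$L^{2_s^*}$ (the paper's chain of inequalities contains the same slip); this does not affect the downstream use of the lemma.
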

\begin{proof}[\textbf{Proof}] Since $f \in L^2(\Omega)$, the existence and uniqueness of a weak solution $u \in \Hm$ of \eqref{solutionL2} is guaranteed by Proposition \ref{existence}.
Put $v=|x|^{-\tau_+}u$, then $v\in H^{s}_0(\xO;|x|^{\tau_+})$ by Theorem \ref{th:main-1} (ii) and
\ba \label{vphi-a}
	\langle v,\phi \rangle_{s,\tau_+}
	=\int_{\xO}f(x)\xf(x)|x|^{\tau_+}dx, \quad \forall \xf\in H^{s}_0(\xO;|x|^{\tau_+}).
\ea
Since $f \in L^2(\Omega)$ and $\tau_+ \geq \frac{2s-N}{2}$, by the H\"older inequality, we deduce that $f \in L^1(\Omega;|x|^{\tau_+})$. 

Let $\lambda>0,$ taking $v_\lambda :=\max\{-\lambda,\min \{v,\lambda\}\}$ as a test function in \eqref{vphi-a}, we have
\bal
	\langle v,v_\lambda \rangle_{s,\tau_+}
	&=\int_{\xO} f(x) v_\lambda(x)|x|^{\tau_+} dx\leq \lambda\int_{\xO} |f(x)||x|^{\tau_+} dx.
\eal
	We see that
	\bal
	(v(x)-v(y))(v_\lambda(x)-v_\lambda(y))\geq (v_\lambda(x)-v_\lambda(y))^2, \quad \forall x,y \in \R^N.
	\eal
	Hence from the  two proceeding inequalities, we obtain
	\bal
	\langle v_\lambda,v_\lambda \rangle_{s,\tau_+}
	\leq \lambda \int_{\xO}|f(x)||x|^{\tau_+} dx.
	\eal
 Therefore for $r>0$ such that $B_{4r}(0) \subset \Omega$,
\bal
\{x\in\xO\setminus B_r(0):\;|u(x)|\geq \lambda\} &= \{x\in\xO\setminus B_r(0):\;|v(x)|\geq \lambda |x|^{-\tau_+}\} \\
&\subset  \{x\in\xO\setminus B_r(0):\;|v(x)|\geq a\lambda \} \\
&\subset \{x\in\xO\setminus B_r(0):\;|v_\lambda(x)|\geq \min\{a,1\}\lambda \},
\eal
where $a=a(N,\Omega,s,\mu,r)$.
This and \eqref{subcritsobolev2} imply	
	\bal
	|\{x\in\xO&\setminus B_r(0):\;|u(x)|\geq \lambda\}|\leq |\{x\in\xO\setminus B_r(0):\;|v_\lambda(x)|\geq \min\{a,1\} \lambda \}|\\
	&\leq C(N,s,\mu,r)\lambda^{-2_s^*}\int_{\xO\setminus B_r(0)}|v_\lambda|^{2_s^*} dx\\
	&\leq C(N,s,\mu,\xO,r)\lambda^{-2_s^*}\int_{\xO}|v_\lambda|^{2_s^*} |x|^{2_s^*\tau_+}X(|x|)^{\frac{2(N-s)}{N-2s}} dx\\
	&\leq C(N,s,\mu,\xO,r)\lambda^{-2_s^*}\left(\int_{\BBR^N}\int_{\BBR^N}\frac{(v_\lambda(x)-v_\lambda(y))^2}{|x-y|^{N+2s}}|y|^{\tau_+}dy |x|^{\tau_+}dx\right)^{\frac{N}{N-2s}}\\
	&\leq C(N,s,\mu,\xO,r)\lambda^{-2_s^*}\left(\int_{\xO}|f(x)||x|^{\tau_+}dx\right)^{2_s^*}, \quad \forall \lambda>0.
	\eal
Therefore,
\bal
\| u \|_{L_w^{2_s^*}(\Omega \setminus B_r(0))} \leq C(N,\Omega,s,\mu,r) \| f \|_{L^1(\Omega;|x|^{\tau_+})},
\eal
which leads to \eqref{u>k-1} due to the continuous embedding $L^q(\Omega \setminus B_r(0)) \subset L^{2_s^*}(\Omega \setminus B_r(0))$ for any $q \in (1,2_s^*)$.
\end{proof}

First we will treat the case where the measure source is concentrated in $\Omega \setminus \{0\}$.


\begin{proof}[\textbf{Proof of Theorem \ref{inter}}]
	
\noindent \textit{Existence.}	By the linearity, we may assume that $\xn\geq 0.$ First we assume that $\xn$ has compact support in $\xO\setminus\{0\}.$ Consider a sequence of standard  mollifiers $\{\xz_{\delta} \}_{\delta >0}$ such that $\xn_{\delta}=\xz_{\delta} \ast \xn\in C^\infty_0(D),$ where $D\Subset\xO\setminus\{0\}$ is an open set. Then
\ba \label{nuntonu}
\int_{\xO} d(x)^s|x|^{\tau_+}\xn_{\delta} dx\to \int_{\xO\setminus\{0\}} d(x)^s|x|^{\tau_+}d\xn \quad \text{as } \delta \to 0
\ea
and
\ba \label{nun<nu}
\| \xn_{\delta} \|_{L^1(\Omega;|x|^{\tau_+})} \leq c \| \nu \|_{\mathfrak{M}(\Omega\setminus\{0\};d(x)^s|x|^{\tau_+})}, \quad \forall \delta > 0.
\ea

Let $u_\delta$ be the nonnegative weak solution of \eqref{veryweaksolution} with $f=\xn_{\delta}$, namely $u_\delta \in L^1(\xO;|x|^{-b})$ and
\ba \label{solun-1}
\int_\xO u_\delta(-\xD)^s_{\tau_+}\psi dx=\int_\xO \psi |x|^{\tau_+} \xn_{\delta} dx,\quad\forall\psi\in \mathbf{X}_\xm(\xO;|x|^{-b}).
\ea
 In view of the proof of Theorem \ref{existence2}, $u_\delta\in \Hm$ and there holds
\bal
\ll u_\delta,\phi \gg_{\mu}=\int_{\xO}\xn_{\delta}\xf \, dx, \quad \forall \xf\in C_0^\infty(\xO\setminus\{0\}).
\eal
Furthermore,  by \eqref{est:apriori-1} and \eqref{nun<nu}, for any $n\in \BBN$, there holds
\ba\label{wheightineq}
\| u_\delta \|_{L^1(\Omega;|x|^{-b})}
\leq c \| \xn_{\delta} \|_{L^1(\Omega;d(x)^s|x|^{\tau_+})} \leq  c \| \nu \|_{\mathfrak{M}(\Omega\setminus\{0\};d(x)^s|x|^{\tau_+})},
 \ea
where $c=c(N,\Omega,s,\mu,b) >0$.

Let $x\in\xO$ and $r>0$ be such that $B_{4r}(0)\subset\xO\setminus\{0\}.$ We assume that $\xe>0$ is small enough and we set $u_{\xd,\xe}=\xz_{\xe} \ast u_\xd\in C^\infty_0(\BBR^N).$ In addition, $u_{\xd,\xe}$ satisfies
\bal \left\{ \BAL
(-\xD)^s u&=h_\xe&&\quad\text{in}\;B_{r}(x)\\
u&=u_{\xd,\xe}&&\quad\text{in}\;\BBR^N\setminus B_{r}(x),
\EAL \right.
\eal
where $h_\xe=\xz_\xe*(\xn_\xd-\mu \frac{u_{\xd}}{|x|^{2s}}).$ Set $v_\xe=u_{\xd,\xe}-\tilde u_{\xd,\xe},$ where $\tilde u_{\xd,\xe}$ is a solution of
\bal \left\{ \BAL
(-\xD)^s u&=0&&\quad\text{in}\;B_{r}(x)\\
u&=u_{\xd,\xe}&&\quad\text{in}\;\BBR^N\setminus B_{r}(x).
\EAL \right.
\eal
By \cite[Proposition 2.5]{CV}, for any $p\in (1,\frac{N}{N-2s})$ and $\xg>\frac{N}{p'}$, there holds
\ba \nonumber 
\norm{v_\xe}_{W^{2s-\xg,p}(B_{r}(x))} &\leq C(r,p,\xg)\int_{B_r(x)} h_\xe dx \\ \label{ineq1} 
&\leq C(r,p,\xg)\int_{ \Omega}|\xn_\xd|dx\, \leq C(r,p,\xg) \| \nu \|_{\mathfrak{M}(\Omega\setminus\{0\};d(x)^s|x|^{\tau_+})},
\ea
where in the last two inequalities we have used \eqref{wheightineq}.

By \cite[Theorem 2.10]{Buc}, we have that 
\bal
\tilde u_{\xd,\xe}(y)=c(N,s)\int_{\BBR^N\setminus B_r(x)}\left(\frac{r^2-|y|^2}{|z|^2-r^2}\right)^{s}|y-z|^{-N}u_{\xd,\xe}(z)dz,\quad\forall y\in B_r(x).
\eal
By \eqref{wheightineq} and the above equality, we can easily show that 
\ba\label{ineq2}
\norm{|\nabla \tilde u_{\xd,\xe}|}_{L^\xk(B_{\frac{r}{2}}(x))}\leq C(r,\xk,N,s)  \| \nu \|_{\mathfrak{M}(\Omega\setminus\{0\};d(x)^s|x|^{\tau_+})},\quad\forall\xk>1.
\ea

Taking into account \eqref{ineq1}, \eqref{ineq2} and using a standard covering argument, we can easily show that for any open set $D\Subset\xO\setminus \{0\}$ there exists a positive constant $C=C(r,\xk,N,s)$ such that
\ba\label{ineq3}
\norm{u_{\xd,\varepsilon}}_{W^{2s-\xg,p}(D)}\leq C \| \nu \|_{\mathfrak{M}(\Omega\setminus\{0\};d(x)^s|x|^{\tau_+})}, \quad \forall \delta>0,\, \varepsilon>0.
\ea
 Since $u_{\delta,\varepsilon} \to u_{\delta}$ weakly in $W^{2s-\gamma}(D)$, it follows that
\ba\label{ineq4}
\norm{u_{\xd}}_{W^{2s-\xg,p}(D)}\leq \liminf_{\varepsilon \to 0}\norm{u_{\xd,\varepsilon}}_{W^{2s-\xg,p}(D)} \leq C \| \nu \|_{\mathfrak{M}(\Omega\setminus\{0\};d(x)^s|x|^{\tau_+})}, \quad \forall \delta>0.
\ea 
This implies that there exists a subsequence, denoted by the same index $\xd,$ and $u\in L^1_{\loc}(\xO\setminus\{0\})$ such that $u_\xd\to u$ a.e. in $\xO.$ Furthermore, by \eqref{u>k-1}, we can easily show that for any $r>0$ and $q\in[1,2_s^*),$ $u_\xd\to u$  in $L^q(\xO\setminus B_r(0)).$

Let $\xs>0$ and $b<a<2s-\tau_+$. Then by \eqref{wheightineq}, for any $\delta > 0$, we have
\ba\label{wheightineq2}
\int_{B_{\xe}(0)}|u_\delta||x|^{-b} dx\leq \xe^{a-b}\int_{B_{\xe}(0)}|u_\delta||x|^{-a} dx \leq c\, \xe^{a-b} \int_{\xO\setminus\{0\}} d(x)^s|x|^{\tau_+}d\xn.
 \ea
Hence there exists $\xe_0$ such that for any $\varepsilon \leq \varepsilon_0$, there holds
\ba \label{unx-b}
\int_{B_{\xe}(0)}|u_\delta||x|^{-b} dx\leq \frac{\xs}{4}.
\ea

Since $u_\xd\to u$  in $L^1(\xO\setminus B_\xe(0))$, there exists $\delta_0>0$ such that
\bal
\int_{\xO\setminus B_{\xe}(0)}|u_{\delta}-u||x|^{-b} dx\leq \frac{\xs}{2},\quad\forall \delta \in (0,\delta_0).
\eal
Letting $\delta \to 0$ in \eqref{unx-b} and using Fatou's lemma, we obtain
\bal
\int_{B_{\xe}(0)}|u||x|^{-b} dx\leq \frac{\xs}{4}.
\eal
Therefore, for any $\delta< \delta_0$,
\bal
\int_{\xO\setminus\{0\}}|u_{\delta}-u||x|^{-b} dx\leq\xs,
\eal
which implies that $u_\delta \to u$ in $L^1(\xO;|x|^{-b})$. This, together with the convergence \eqref{nuntonu} and estimate \eqref{estLinfty3}, enables us to pass to the limit in \eqref{solun-1} to obtain
\ba \label{u-sol-1}
\int_\xO u(-\xD)^s_{\tau_+}\psi dx=\int_{\xO\setminus\{0\}}\psi |x|^{\tau_+} d\xn,\quad\forall\psi\in \mathbf{X}_\xm(\xO;|x|^{-b}),
\ea
namely $u$ is a weak solution of \eqref{veryweaksolutionmesure}. Since $u_\delta \geq 0$ a.e. in $\Omega \setminus \{0\}$ for any $\delta > 0$, it follows that $u \geq 0$ a.e. in $\Omega \setminus \{0\}$. Moreover, $u$ is the unique solution to \eqref{veryweaksolutionmesure}.


As for the general case $\xn\geq0,$ we consider a smooth exhaustion of $\xO\setminus\{0\},$ i.e. smooth open sets $\{O_l\}_{l \in\N}$ such that
\bal
O_l \Subset O_{l+1}\Subset\xO\setminus \{0\}\quad\text{and}\quad\cup_{l \in \N} O_l=\xO\setminus\{0\}.
\eal
Set $\xn_l = \1_{\overline{O}_l}\xn$ and let $u_l \in L^1(\xO;|x|^{-b})$ be the nonnegative weak solution to \eqref{veryweaksolutionmesure}, namely
\ba \label{ulsol}
\int_\xO u_l(-\xD)^s_{\tau_+}\psi dx=\int_{\xO\setminus\{0\}}\psi |x|^{\tau_+} d\xn_\ell,\quad\forall\psi\in \mathbf{X}_\xm(\xO;|x|^{-b}).
\ea
For $l>l'$, since $\nu_l - \nu_{l'} \geq 0$, it follows that $u_l \geq u_{l'}$ a.e. in $\Omega \setminus \{0\}$. We have
\ba \label{ull'}
\int_\xO (u_l-u_{l'})(-\xD)^s_{\tau_+}\psi dx=\int_{\xO\setminus\{0\}}\psi |x|^{\tau_+} d(\nu_l - \nu_{l'}),\quad\forall\psi\in \mathbf{X}_\xm(\xO;|x|^{-b}).
\ea

By taking $\psi=\xi_b$ (where $\xi_b$ is the solution to \eqref{xib}) in \eqref{ull'}, we deduce
\bal
\| u_l - u_{l'} \|_{L^1(\Omega;|x|^{-b})} &=\int_{\Omega}(u_l - u_{l'})|x|^{-b}dx   \\
&= \int_{\xO\setminus\{0\}}\xi_b |x|^{\tau_+} d(\nu_l - \nu_{l'}) \leq c\,\| \nu_{l} - \nu_{l'} \|_{\GTM(\Omega \setminus \{0\};d(x)^s|x|^{\tau_+})},
\eal
where in the last inequality we have used \eqref{estLinfty3}. Since $\nu_{l} \to \nu$ strongly in $\GTM(\Omega \setminus \{0\};|x|^{\tau_+})$ as $l \to \infty$, from the above estimates,
we see that $\{u_l \}_{l \in \N}$ is a Cauchy sequence in  $L^1(\xO;|x|^{-b}),$ which implies that there exists a function $u$ such that $u_l \to u$ in $L^1(\Omega;|x|^{-b})$. Letting $l \to \infty$ in \eqref{ulsol} yields \eqref{u-sol-1}. Therefore $u$ is a weak solution of \eqref{veryweaksolution}.

\medskip

\noindent \textit{A priori estimate.}  Taking $\psi =\xi_b$ (where $\xi_b$ is the solution to \eqref{xib}) as a test function in \eqref{weakform-a1}, we obtain \eqref{l1ineqb}. \medskip

\noindent \textit{Monotonicity.}  Assume that  $\nu_1, \nu_2 \in \GTM(\Omega \setminus \{0\};|x|^{\tau_+})$ such that $\nu_1 \leq \nu_2$ and let $u_i$ be the unique weak solution to \eqref{veryweaksolution} with $\nu$ replaced by $\nu_i$, $i=1,2$. From the above construction, we see that $u_1 \leq u_2$ a.e. in $\Omega \setminus \{0\}$. Consequently, we see that if $\nu \geq 0$ then the weak solution $u$ to \eqref{veryweaksolutionmesure} satisfies $u \geq 0$ a.e. in $\Omega \setminus \{0\}$. \medskip

\noindent \textit{Uniqueness.} The uniqueness is a consequence of the monotonicity. 
\end{proof}

Next we treat the case where the source is a measure on the whole domain $\Omega$. Recall that function $\Phi_{s,\mu}^{\Omega}$ satisfies \eqref{PhiOmega}. In view of the proof of Lemmas \ref{smoothlemma} and \ref{smoothlemma2}, $\xF_\xm^\xO\in C^{2s+\xb_0}(\xO\setminus\{0\})$ for some $\xb_0>0$ and for any $\xe>0$ such that $B_{4\xe}(0)\subset\xO$ there holds
\bal
\norm{d^{-s}\Phi_{s,\mu}^{\Omega}}_{C^\xa(\overline{\xO\setminus B_\xe(0)})}+\norm{\Phi_{s,\mu}^{\Omega}}_{C^s(\overline{\xO\setminus B_{\xe}(0)})}
\leq c,
\eal
where $c>0$ depends only on $\xO,s,\xm,\xe$.

\begin{proof}[\textbf{Proof of Theorem \ref{existence3-dirac}}]
By virtue of Theorem \ref{inter}, there exists a unique weak solution $u_\nu \in L^1(\Omega;|x|^{-b})$ of problem \eqref{veryweaksolution}, namely
\ba \label{unu-sol}
\int_\xO u_{\nu}(-\xD)^s_{\tau_+}\psi dx=\int_{\xO\setminus\{0\}}\psi |x|^{\tau_+} d\xn,\quad\forall\psi\in \mathbf{X}_\xm(\xO;|x|^{-b}).
\ea	
Put $u_{\nu,\ell}=u_\nu + \ell \Phi_{s,\mu}^{\Omega}$. Take $\psi \in \mathbf{X}_{\mu}(\Omega;|x|^{-b})$, then $|(-\Delta)_{\tau_+}^s \psi(x)| \leq C_{\psi}|x|^{-b}$ for a.e. $x \in \Omega \setminus \{0\}$. By noticing that $\tau_- - b > -N$, we deduce that
\bal
\left|\int_{\Omega} \Phi_{s,\mu}^{\Omega}(x) (-\Delta)_{\tau_+}^s \psi (x) \right| \leq C_\psi \int_{\Omega} |x|^{\tau_- - b}dx < + \infty.
\eal
Therefore, from \eqref{unu-sol}, we obtain \eqref{weakform-a1}.

The uniqueness of the weak solution to \eqref{veryweaksolution} follows from Theorem \ref{existence2}.
\end{proof}



\medskip

\end{document}